\DeclareMathOperator\Vol{Vol}
\DeclareMathOperator\sign{Sign}
\DeclareMathOperator\Per{Per}
\DeclareMathOperator\argmax{argmax}
\DeclareMathOperator\E{E}
\let\P\relax
\DeclareMathOperator\P{P}
\DeclareMathOperator{\normal}{Normal}
\DeclareMathOperator{\cov}{Cov}
\DeclareMathOperator\U{U}
\newcommand{\Z}{\mathbb{Z}}
\newcommand{\R}{\mathbb{R}}
\newcommand{\N}{\mathbb{N}}
\newcommand\xleftrightarrow[2][]{%
  \ext@arrow 9999{\longleftrightarrowfill@}{#1}{#2}}
\newcommand\longleftrightarrowfill@{%
  \arrowfill@\leftarrow\relbar\rightarrow}
\newcounter{iconst}
\newcommand{\extend}{\vcenter{\hbox{\;\tikz[scale=2.5]
      {
        \draw[color=black!60] (0, 0) rectangle (2ex, 2.5ex);
        \draw[color=black!60] (0, 0) rectangle (2.4ex, 2.9ex);
        \draw (0ex, .3ex) .. controls (1.2ex, 1ex) and (.8ex, 2ex) .. (2ex, 2.2ex);
        \draw[very thick] (2ex, 2.2ex) -- (2.4ex, 2.6ex);
        \draw [decorate,decoration={brace,amplitude=.3ex,raise=.3ex},yshift=0pt] (2ex, 0ex) -- (0ex, 0ex);
        \draw [decorate,decoration={brace,amplitude=.3ex,raise=.3ex},yshift=0pt] (0ex, 0ex) -- (0ex, 2.5ex);
        \draw [decorate,decoration={brace,amplitude=.3ex,raise=.3ex},yshift=0pt] (2.4ex, 2.9ex) -- (2.4ex, 2.5ex);
        \draw [decorate,decoration={brace,amplitude=.3ex,raise=.3ex},yshift=0pt] (2ex, 2.9ex) -- (2.4ex, 2.9ex);
        \draw (1ex, -.2ex) node[anchor=north] {$w(h) - 4$};
        \draw (2.2ex, 3ex) node[anchor=south] {$12$};
        \draw (2.5ex, 2.7ex) node[anchor=west] {$12$};
        \draw (-0.2ex, 1.25ex) node[anchor=east] {$3h/4$};

  }}}
}
\newcommand{\vdot}[3]{\vcenter{\hbox{\;\tikz[scale=1.2]
      {
        \fill[pattern=north west lines, pattern color=black!60] (2ex,0) rectangle ++(.2ex,4ex);
        \draw[color=black!60] (0, 0) rectangle (2ex, 4ex);
        \draw (1ex, 0) .. controls (0.4ex, .5ex) and (1ex, 1ex) .. (2ex, 1.3ex);
        \draw (2ex, 1.3ex) .. controls (1.4ex, 1.8ex) .. (2ex, 2.6ex);
        \draw (2ex, 2.6ex) .. controls (.8ex, 3.3ex) .. (1.5ex, 4ex);
        \draw (1ex,-.2ex) -- (1ex, .2ex);
        \draw[very thick, color=black] (0, 4ex) -- (2ex, 4ex);
        \draw (1ex, 0ex) node[anchor=north] {$\scriptstyle #1$};
        \draw (2ex, 2ex) node[anchor=west] {$\scriptstyle #2$};
        \draw (0ex, 0ex) node[anchor=east] {$\scriptstyle #3$};
        \fill (0, 0) circle (0.2ex);
  }}}
}
\newcommand{\vbar}[3]{\vcenter{\hbox{\;\tikz[scale=1.2]
      {
        \draw[color=black!60, dashed] (0, 0) -- (0,4ex) -- (2ex, 4ex);
        \draw[color=black!60] (0, 0) -- (2ex, 0);
        \draw (1ex, 0) .. controls (0.4ex, 2ex) and (1ex, 2.8ex) .. (2ex, 3ex);
        \draw (1ex,-.2ex) -- (1ex, .2ex);
        \draw[very thick, color=black] (2ex, 0) -- (2ex, 4ex);
        \draw (1ex, 0ex) node[anchor=north] {$\scriptstyle #1$};
        \draw (2ex, 2ex) node[anchor=west] {$\scriptstyle #2$};
        \draw (0ex, 0ex) node[anchor=east] {$\scriptstyle #3$};
        \fill (0, 0) circle (0.2ex);
  }}}
}
\newcommand{\vvdot}[4]{\vcenter{\hbox{\;\tikz[scale=1.2]
      {
        \fill[pattern=north west lines, pattern color=black!60] (2ex,0) rectangle ++(.2ex,4ex);
        \draw [decorate,decoration={brace,amplitude=.3ex,raise=.3ex},yshift=0pt] (1.2ex, 4ex) -- (2ex, 4ex);
        \draw[color=black!60] (0, 0) rectangle (2ex, 4ex);
        \draw (1ex, 0) .. controls (0.4ex, .5ex) and (1ex, 1ex) .. (2ex, 1.3ex);
        \draw (2ex, 1.3ex) .. controls (1.4ex, 1.8ex) .. (2ex, 2.6ex);
        \draw (2ex, 2.6ex) .. controls (.8ex, 3.3ex) .. (1.5ex, 4ex);
        \draw (1ex,-.2ex) -- (1ex, .2ex);
        \draw[very thick, color=black] (1.2ex, 4ex) -- (2ex, 4ex);
        \draw (1ex, 0ex) node[anchor=north] {$\scriptstyle #1$};
        \draw (1.7ex, 4ex) node[anchor=south] {$\scriptstyle #2$};
        \draw (2ex, 2ex) node[anchor=west] {$\scriptstyle #3$};
        \draw (0ex, 0ex) node[anchor=east] {$\scriptstyle #4$};
        \fill (0, 0) circle (0.2ex);
  }}}
}
\newcommand{\greenone}[4]{\vcenter{\hbox{\;\tikz[scale=1.2]
      {
        \fill[pattern=north west lines, pattern color=black!60] (2ex,0) rectangle ++(.2ex,4ex);
        \draw [decorate,decoration={brace,amplitude=.3ex,raise=.3ex},yshift=0pt] (1.2ex, 4ex) -- (2ex, 4ex);
        \draw[color=black!60] (0, 0) rectangle (2ex, 4ex);
        \draw (0ex, .3ex) .. controls (0.4ex, .5ex) and (1ex, 1ex) .. (2ex, 1.3ex);
        \draw (2ex, 1.3ex) .. controls (1.4ex, 1.8ex) .. (2ex, 2.6ex);
        \draw (2ex, 2.6ex) .. controls (.8ex, 3.3ex) .. (1.5ex, 4ex);
        \draw[very thick, color=black] (1.3ex, 4ex) -- (2ex, 4ex);
        \draw[very thick, color=black] (0, 0ex) -- (0ex, 4ex);
        \draw (1ex, 0ex) node[anchor=north] {$\scriptstyle #1$};
        \draw (2ex, 2ex) node[anchor=west] {$\scriptstyle #3$};
        \draw (1.7ex, 4ex) node[anchor=south] {$\scriptstyle #2$};
        \draw (0ex, 0ex) node[anchor=east] {$\scriptstyle #4$};
        \fill (0, 0) circle (0.2ex);
  }}}
}
\newcommand{\neasttwo}[4]{\vcenter{\hbox{\;\tikz[scale=1.2]
      {
        \draw[color=black!60] (0, 0) rectangle (2ex, 4ex);
        \draw [decorate,decoration={brace,amplitude=.3ex,raise=.3ex},yshift=0pt] (1.2ex, 4ex) -- (2ex, 4ex);
        \draw (0ex, 1.6ex) .. controls (1.4ex, 2.2ex) and (.6ex, 3.4ex) .. (1.8ex, 4ex);
        \draw[very thick, color=black] (0, 0) -- (0, 4ex);
        \draw[very thick, color=black] (1.3ex, 4ex) -- (2ex, 4ex);
        \draw (1ex, 0ex) node[anchor=north] {$\scriptstyle #1$};
        \draw (1.5ex, 4ex) node[anchor=south] {$\scriptstyle #2$};
        \draw (2ex, 2ex) node[anchor=west] {$\scriptstyle #3$};
        \draw (0ex, 0ex) node[anchor=east] {$\scriptstyle #4$};
        \fill (0, 0) circle (0.2ex);
  }}}
}
\newcommand{\neasttworef}[4]{\vcenter{\hbox{\;\tikz[scale=1.2]
      {
        \draw[color=black!60] (0, 0) rectangle (2ex, 4ex);
        \draw [decorate,decoration={brace,amplitude=.3ex,raise=.3ex},yshift=0pt] (0ex, 4ex) -- (0.8ex, 4ex);
        \draw (2ex, 1.6ex) .. controls (.6ex, 2.2ex) and (1.4ex, 3.4ex) .. (.2ex, 4ex);
        \draw[very thick, color=black] (2ex, 0) -- (2ex, 4ex);
        \draw[very thick, color=black] (0.7ex, 4ex) -- (0ex, 4ex);
        \draw (1ex, 0ex) node[anchor=north] {$\scriptstyle #1$};
        \draw (.5ex, 4ex) node[anchor=south] {$\scriptstyle #2$};
        \draw (2ex, 2ex) node[anchor=west] {$\scriptstyle #3$};
        \draw (0ex, 0ex) node[anchor=east] {$\scriptstyle #4$};
        \fill (0, 0) circle (0.2ex);
  }}}
}
\newcommand{\vdotdotgood}[3]{\vcenter{\hbox{\;\tikz[scale=1.2]
      {
        \draw[color=black!60, dashed] (0, 0) -- (0,4ex) -- (2ex, 4ex) -- (2ex, 0);
        \draw[color=black!60] (0, 0) -- (2ex, 0);
        \draw (1ex, 0) .. controls (-1ex, 1ex) and (3ex, 2ex) .. (.3ex, 4ex);
        \draw (1ex,-.2ex) -- (1ex, .2ex);
        \draw[very thick, color=black] (0, 4ex) -- (2ex, 4ex);
        \draw (1ex, 0ex) node[anchor=north] {$\scriptstyle #1$};
        \draw (2ex, 2ex) node[anchor=west] {$\scriptstyle #2$};
        \draw (0ex, 0ex) node[anchor=east] {$\scriptstyle #3$};
        \fill (0, 0) circle (0.2ex);
      }}}
}
\newcommand{\vdotdot}[4]{\vcenter{\hbox{\;\tikz[scale=1.2]
      {
        \fill[pattern=north west lines, pattern color=black!60] (2ex,0) rectangle ++(.2ex,4ex);
        \draw [decorate,decoration={brace,amplitude=.3ex,raise=.3ex},yshift=0pt] (0ex, 4ex) -- (.7ex, 4ex);
        \draw[color=black!60] (0, 0) rectangle (2ex, 4ex);
        \draw (1ex, 0) .. controls (0.4ex, .5ex) and (1ex, 1ex) .. (2ex, 1.3ex);
        \draw (2ex, 1.3ex) .. controls (1.4ex, 1.8ex) .. (2ex, 2.6ex);
        \draw (2ex, 2.6ex) .. controls (.8ex, 3.3ex) .. (.4ex, 4ex);
        \draw (1ex,-.2ex) -- (1ex, .2ex);
        \draw[very thick, color=black] (0, 4ex) -- (.7ex, 4ex);
        \draw (1ex, 0ex) node[anchor=north] {$\scriptstyle #1$};
        \draw (.35ex, 4ex) node[anchor=south] {$\scriptstyle #2$};
        \draw (2ex, 2ex) node[anchor=west] {$\scriptstyle #3$};
        \draw (0ex, 0ex) node[anchor=east] {$\scriptstyle #4$};
        \fill (0, 0) circle (0.2ex);
      }}}
}
\newcommand{\vdotdotprime}[4]{\vcenter{\hbox{\;\tikz[scale=1.2]
      {
        \fill[pattern=north west lines, pattern color=black!60] (2ex,0) rectangle ++(.2ex,4ex);
        \draw [decorate,decoration={brace,amplitude=.3ex,raise=.3ex},yshift=0pt] (0ex, 4ex) -- (.7ex, 4ex);
        \draw[color=black!60] (0, 0) rectangle (2ex, 4ex);
        \draw (1ex, 0) .. controls (0.4ex, .5ex) and (1ex, 1ex) .. (2ex, 1.3ex);
        \draw (2ex, 1.3ex) .. controls (1.4ex, 1.8ex) .. (2ex, 2.6ex);
        \draw (2ex, 2.6ex) .. controls (1ex, 3.3ex) .. (1.5ex, 4ex);
        \draw (1ex,-.2ex) -- (1ex, .2ex);
        \draw[very thick, color=black] (0, 4ex) -- (.7ex, 4ex);
        \draw (1ex, 0ex) node[anchor=north] {$\scriptstyle #1$};
        \draw (.35ex, 4ex) node[anchor=south] {$\scriptstyle #2$};
        \draw (2ex, 2ex) node[anchor=west] {$\scriptstyle #3$};
        \draw (0ex, 0ex) node[anchor=east] {$\scriptstyle #4$};
        \fill (0, 0) circle (0.2ex);
      }}}
}
\newcommand{\vdotdotdot}[4]{\vcenter{\hbox{\;\tikz[scale=1.2]
      {
        \draw [decorate,decoration={brace,amplitude=.3ex,raise=.3ex},yshift=0pt] (0ex, 4ex) -- (.7ex, 4ex);
        \draw[color=black!60] (0, 0) rectangle (2ex, 4ex);
        \draw (1ex, 0) .. controls (-1ex, 1ex) and (3ex, 2ex) .. (.3ex, 4ex);
        \draw[very thick, color=black] (0, 4ex) -- (.7ex, 4ex);
        \draw (1ex,-.2ex) -- (1ex, .2ex);
        \draw (1ex, 0ex) node[anchor=north] {$\scriptstyle #1$};
        \draw (.35ex, 4ex) node[anchor=south] {$\scriptstyle #2$};
        \draw (2ex, 2ex) node[anchor=west] {$\scriptstyle #3$};
        \draw (0ex, 0ex) node[anchor=east] {$\scriptstyle #4$};
        \fill (0, 0) circle (0.2ex);
      }}}
}
\newcommand{\vdotdotdotref}[4]{\vcenter{\hbox{\;\tikz[scale=1.2]
      {
        \draw [decorate,decoration={brace,amplitude=.3ex,raise=.3ex},yshift=0pt] (1.3ex, 4ex) -- (2ex, 4ex);
        \draw[color=black!60] (0, 0) rectangle (2ex, 4ex);
        \draw (1ex, 0) .. controls (3ex, 1ex) and (-1ex, 2ex) .. (1.7ex, 4ex);
        \draw[very thick, color=black] (1.3ex, 4ex) -- (2ex, 4ex);
        \draw (1ex,-.2ex) -- (1ex, .2ex);
        \draw (1ex, 0ex) node[anchor=north] {$\scriptstyle #1$};
        \draw (1.65ex, 4ex) node[anchor=south] {$\scriptstyle #2$};
        \draw (2ex, 2ex) node[anchor=west] {$\scriptstyle #3$};
        \draw (0ex, 0ex) node[anchor=east] {$\scriptstyle #4$};
        \fill (0, 0) circle (0.2ex);
      }}}
}
\begin{document}

\title{Fluctuation bounds for symmetric random walks on dynamic environments via Russo-Seymour-Welsh}

\date{\today}
\author{
  Rangel Baldasso
  \thanks{Email: \ \texttt{rangel@mat.puc-rio.br}; \ Department of Mathematics, PUC-Rio, Rua Marqu\^es de S\~ao Vicente 225, G\'avea, 22451-900 Rio de Janeiro, RJ - Brazil.}
  \and
  Marcelo Hil\'ario
  \thanks{Email: \ \texttt{mhilario@mat.ufmg.com}; \ Departement of Mathematics, Universidade Federal de Minas Gerais, Av.\ Ant\^onio Carlos 6627, 31270-901 Belo Horizonte, MG - Brazil.}
  \and
  Daniel Kious
  \thanks{Email: \ \texttt{d.kious@bath.ac.uk}; \ Department of Mathematical Sciences, University of Bath, Claverton Down, BA2 7AY Bath, UK.}
  \and
  Augusto Teixeira
  \thanks{Email: \ \texttt{augusto@impa.br}; \ IMPA, Estrada Dona Castorina 110, 22460-320 Rio de Janeiro, RJ - Brazil and IST, University of Lisbon, Portugal.}
}

\maketitle

\begin{abstract}
  In this article we prove a lower bound for the fluctuations of symmetric random walks on dynamic random environments in dimension $1 + 1$ in the perturbative regime where the walker is weakly influenced by the environment.
  We suppose that the random environment is invariant with respect to translations and reflections, satisfy the FKG inequality and a mild mixing condition.
  The techniques employed are inspired by percolation theory, including a Russo-Seymour-Welsh (RSW) inequality.
  To exemplify the generality of our results, we provide two families of fields that satisfy our hypotheses: a class of Gaussian fields and Confetti percolation models.

  \medskip

  \noindent
  \emph{Keywords and phrases.}
  Random walk, dynamic random environment, dependent environment.

  \noindent
  MSC 2010: \emph{Primary.} 60K37, 60F05; \emph{Secondary.} 82B41, 82B43.
\end{abstract}

\section{Introduction}
\label{s:intro}
~
\par Random walks in random environments have been intensively studied over the last decades.
Some early works were motivated by applications in biophysics, chemistry and physics \cite{chernov67, temkin69}.
From the theoretical point of view, they are sources of challenging mathematical problems and some basic questions regarding the asymptotic behavior of such processes remain open to this date.
In this paper we will focus on random walks on dynamic random environments (RWDRE), a class of random walks whose transition probabilities depend on a random environment that also evolves stochastically in time.

There is currently a solid understanding on the asymptotic behavior of such processes under the assumption that the dynamic random environment mixes fast and uniformly.
For instance, in that context, methods like renewal arguments \cite{comets2004, AvenaThesis} and Markov techniques \cite{redig2013random} have been employed to successfully derive Laws of Large Numbers (LLN) and Central Limit Theorems (CLT) in a great degree of generality.

In the case of non-uniformly mixing environments, progress has been made in a few fronts as well \cite{mountford2015,Avena2017,10.1214/19-AOP1414}, even for models that are particularly difficult to deal with due to the conservative nature of the underlying environment \cite{zbMATH06514478,BHdSST18b,BHdSST18,berard2016fluctuations,huveneers2015random,HKT19} by employing various techniques such as renewal theory, Markov theory, spectral analysis, and renormalization.
As a rule, results for RWRE on non-uniformly mixing environment feature one or more of the following limitations: they are model specific \cite{mountford2015}, do not provide any information on fluctuations \cite{10.1214/19-AOP1414}, are perturbative in nature \cite{zbMATH06514478,huveneers2015random} or require exponential mixing \cite{Avena2017,mountford2015}.
These shortcomings indicate that there are still fundamental questions that remain unanswered in RWDRE.
For instance, the question of whether trapping effects are relevant to the point that these models can exhibit anomalous fluctuations seems to be a very interesting open problem.
We discuss the previous works on RWDRE in more detail after the statement of our main results.

\bigskip

The main objective of this article is to present a new approach that leads to lower bounds on the fluctuations of symmetric random walks on top of random environments that have slow and non-uniform mixing.

Our arguments take inspiration from percolation theory as in \cite{10.1214/19-AOP1414, HKT19}, but with an extra input inspired by \cite{D-CTT18}, where a Russo-Seymour-Welsh (RSW) type argument was introduced to study oriented percolation.

\bigskip

Let us now introduce the setup of our main result.
For this we start with a random environment $f: \mathbb{Z}^2 \to \mathbb{R}$ sampled according to a certain probability measure $\P$.
Given such a random environment $f$ and fixing $\delta \in [0,1/2]$, we build a discrete-time random walk $(X_n)_{n \geq 0}$ evolving on top of $f$.
For that we fix $X_0 = 0$ almost surely and define a Markov dynamics in $\Z$ with transition probabilities
\begin{equation}
  \label{e:X_n}
  \begin{split}
    \mathbb{P}^{f} \big( X_{n + 1} = x + 1 \big| X_n = x \big)
    & = 1 - \mathbb{P}^{f} \big( X_{n + 1} = x - 1 \big| X_n = x \big)\\
    & =
    \begin{cases}
      \frac{1}{2} + \delta, & \text{if $f(x, n) > 0$}\\
      \frac{1}{2}, & \text{if $f(x, n) = 0$}\\
      \frac{1}{2} - \delta, & \text{if $f(x, n) < 0$}
    \end{cases}\\
    & = \frac{1}{2} + \delta \big( \textbf{1}_{f_{(x, n)} > 0} - \textbf{1}_{f_{(x, n)} < 0} \big).
  \end{split}
\end{equation}
Roughly speaking, the walker only observes the sign of $f$ at it current position in order to determine its next step.

Our main result concerns lower bounds on the fluctuations of the above random walk.
But before stating it, let us list the three hypotheses that we require on the environment $f$.
These can be informally described as: \emph{symmetry}, {\emph{translation invariance}}, \emph{positive association}, and \emph{decoupling}.

We start by assuming that the environment is invariant with respect to translations and to reflections about the vertical axis.
More precisely, suppose that for every $(x_0, y_0) \in \mathbb{Z}^2$,
\begin{equation}
  \label{e:translation}
  \tag{T}
  \big( f(x, y) \big)_{(x, y) \in \mathbb{Z}^2} \overset{d}{\sim} \big( f(x, y) \big)_{(x + x_0, y + y_0) \in \mathbb{Z}^2}
\end{equation}
and
\begin{equation}
  \label{e:symmetry}
  \tag{R}
  \big( f(x, y) \big)_{(x, y) \in \mathbb{Z}^2} \overset{d}{\sim} \big( f(-x, y) \big)_{(x, y) \in \mathbb{Z}^2}.
\end{equation}
under the law $\P$.

We will also require that $f$ satisfies a Harris-FKG inequality.
An event $A$ is said increasing if
\begin{equation}\label{eq:increasing}
f \leq \tilde{f} \text{ and } f \in A \quad \text{implies} \quad \tilde{f} \in A.
\end{equation}
We suppose that for any increasing events $A$ and $B$
\begin{equation}
  \label{e:fkg}
  \tag{FKG}
  \P \big( f \in A \cap B\big) \geq \P \big( f \in A \big) \P \big( f \in B \big).
\end{equation}
In fact we will only use the inequality above for events that depend on finitely many coordinates of $f$.

Finally we will assume that $f$ satisfies a decoupling condition.
Informally speaking, it states that with high probability $f$ can be approximated inside a rectangle by another field with finite range of dependence.
Let $\d$ denote the $L^{1}$-distance in $\Z^{2}$.

\begin{definition}[Decoupling condition]
  \label{def:decoupling}
  We say that a field $f$ satisfies the \emph{decoupling condition} with decay rate $\varepsilon(\cdot)$ if the following holds. For every integer $r \geq 2$ and every rectangle $C = [a, a+w] \times [b, b+h] \subset \Z^{2}$ with $w, h \geq 1$, there exists a coupling between $f$ and a field ${f}^{C, r}$ such that
  \begin{equation}
    \label{e:likely_equal}
    \P \big( {f}^{C, r} \neq f \big) \leq \varepsilon( w, h, r),
  \end{equation}
  and, given any two sets $A \subset C$ and $B \subset \Z^{2}$,
  \begin{equation}
    \label{e:finite_range}
  \text{if $\d \big(A, B \big) > r$, then the fields ${f}^{C, r} \big|_{A}$ and ${f}^{C, r} \big|_{B}$ are independent.}
  \end{equation}
\end{definition}
We have not found this definition in the existing literature and we believe it to be interesting on its own.

We are now in the position to state our main result.

\nc{c:alpha}
\nc{c:decoupling}
\nc{c:perturbative}
\begin{theorem}
  \label{t:main}
   There exists a constant $\uc{c:alpha} > 0$ such that the following holds.
  Suppose that the random field $f$ satisfies \eqref{e:translation}, \eqref{e:symmetry}, \eqref{e:fkg}, and Definition~\ref{def:decoupling} with
  \begin{equation}
    \label{e:polynomial}
    \varepsilon(w, h, r) \leq \uc{c:decoupling} \big( wh + (w + h)r + r^2 \big) r^{-\uc{c:alpha}},
  \end{equation}
  for some $\uc{c:decoupling} > 0$.
  There exist $\uc{c:perturbative}>0$ and $\xi > 0$ such that if $\delta \leq \uc{c:perturbative}$ then the random walk $(X_n)_{n \in \N}$ introduced in \eqref{e:X_n} satisfies
  \begin{equation}
    \label{e:main}
    \inf_{n \geq 1} \mathbb{P} \big( |X_n| \geq n^\xi \big) > 0.
  \end{equation}
\end{theorem}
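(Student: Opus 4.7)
The plan is to combine an RSW-type inequality for the sign field of $f$ with a multi-scale renormalization argument, in the spirit of \cite{D-CTT18} and \cite{HKT19}. The first step is to study the induced percolation of the level sets $\{f > 0\}, \{f < 0\} \subset \Z^2$. By \eqref{e:translation} and \eqref{e:symmetry} these fields are translation invariant and reflection invariant about the time axis; by \eqref{e:fkg} they are positively associated; and the polynomial decoupling of Definition~\ref{def:decoupling} provides an approximate finite-range property. These are exactly the inputs needed to run the RSW machinery, and the expected output is that sign-constant crossings of rectangles of bounded aspect ratio occur with probability uniformly bounded away from $0$ and $1$ in the scale.

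The second step is to leverage the RSW bound to construct, at every scale $\ell$, a ``good event'' $G_\ell$ supported on a space-time box of dimensions of order $\ell \times T_\ell$, where $T_\ell$ is a polynomial in $\ell$ to be optimized, such that: $(i)$ $\P(G_\ell)$ is bounded below uniformly in $\ell$, and $(ii)$ conditional on $G_\ell$ and on the walker entering the box from below, the walker has uniformly positive conditional probability to exit through the right side, hence to displace by $\ell$. A natural candidate for $G_\ell$ is the existence of a channel of $\{f > 0\}$ crossing the box horizontally, together with $\{f < 0\}$ barriers discouraging upward escape. Inside a sufficiently long positive channel, the walker accumulates drift of order $\delta T_\ell$ while its diffusive spread stays of order $\sqrt{T_\ell}$, so a careful choice of $T_\ell$ produces a net displacement comparable to $\ell$. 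The perturbative smallness of $\delta$ enters here, to guarantee that the walker does not immediately leak out of the channel.

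The third step is to iterate across dyadic scales $\ell_k = 2^k$ up to $\ell_k$ of order $n^\xi$. Using the decoupling \eqref{e:likely_equal}--\eqref{e:finite_range} with rate \eqref{e:polynomial}, good events at well-separated scales become approximately independent at a polynomial cost, and by summing over scales at least one scale produces the desired displacement with probability bounded below uniformly in $n$. The exponent $\xi$ is then fixed by balancing the uniform RSW probability against the polynomial decoupling error, and will depend on $\uc{c:alpha}$.

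The main difficulty lies in the first step. The classical RSW arguments rely on exact finite-range independence and often on self-duality, whereas here the decoupling is only polynomial and the symmetry is merely a reflection about the time axis. Without self-duality it is not immediate to identify crossing probabilities as $\asymp 1/2$; a likely way around this is a dichotomy argument, handling separately the case where one of the sign phases is macroscopically dominant (in which the walker is ballistic and the conclusion holds trivially) and the balanced case (where the RSW machinery applies). A secondary technical issue is that the walker's trajectory reveals the environment it visits, so the multi-scale scheme must be organized so that successive scales interact only through well-controlled entry conditions, following the renormalization strategy of \cite{HKT19}.
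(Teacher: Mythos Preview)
Your proposal has a conceptual gap at the first step that propagates through the rest of the argument. You apply RSW to the sign-percolation of the environment (crossings of $\{f>0\}$ and $\{f<0\}$), but the paper's RSW is for the \emph{random walk trajectories themselves}. Via the graphical construction of Section~\ref{notation}, one couples random walks from all starting points so that trajectories coalesce and cannot cross; the crossing events $H^u(w,h)$ and $V^u(w,h)$ of Section~\ref{s:crossing} then ask whether some \emph{trajectory} crosses a $w\times h$ box left-to-right or bottom-to-top. The fluctuation scale is captured by the implicit function $w(h)$ at which these two trajectory-crossing probabilities balance (Definition~\ref{d:w_h}), and the RSW statements (Propositions~\ref{l:v_rsw} and~\ref{p:h_rsw}) say that boxes of dimensions $w(h)\times 5h$ and $\tfrac{259}{256}w(h)\times 3h$ are also crossed with uniformly positive probability. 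This is then bootstrapped: if $w(h_0)\ge h_0^\xi$ at some large scale $h_0$, then $w(h)\gtrsim h^\xi$ for all $h\ge h_0$ (Lemma~\ref{l:bootstrap}).

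Your channel picture does not obviously work because a horizontal $\{f>0\}$-crossing is a percolation path with no time monotonicity, whereas the walker moves monotonically in time; there is no mechanism forcing it to stay in, or follow, such a channel. Moreover your identification of where $\delta$ enters is backwards: smaller $\delta$ means \emph{weaker} coupling to the environment, which makes it harder, not easier, to exploit a positive channel. In the paper, the perturbative smallness of $\delta$ is used only to \emph{trigger} the bootstrap: for $\delta$ small one can couple $X$ with a simple random walk on $[0,h_0]$ and inherit its $\sqrt{h_0}$ fluctuations, giving $w(h_0)\ge h_0^{1/2-\varepsilon}$ at the initial scale. Finally, your dichotomy (one sign phase dominating, hence ballistic) is ruled out by the reflection symmetry \eqref{e:symmetry}; indeed the walk is recurrent with zero speed (Remarks~\ref{r:lln} and~\ref{r:recurrent}), so only the balanced case is relevant.
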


\begin{remark}
  We observe that under the condition that $f$ satisfies the decoupling condition as in Definition~\ref{def:decoupling}, one can obtain a Law of Large Numbers for the above random walk using \cite{10.1214/19-AOP1414} (see also \cite{HKT19}).
  Indeed, we prove in Remark~\ref{r:lln} that, if $\uc{c:alpha} > 10$, then $\lim_{n \to \infty} X_n/n = 0$, for almost every realization of the environment and the random walk.
\end{remark}

\begin{remark}
  An important limitation of Theorem~\ref{t:main} concerns the perturbative nature of the interaction between the random walk and the environment.
  This can be seen by the fact that $\delta$ is required to be small.
  The same limitation is also present in other works on RWDRE, such as \cite{Avena2017,2012arXiv1209.1511D}.
\end{remark}

\bigskip

For the sake of concreteness, let us give two examples of random environments that satisfy our hypotheses.

\nc{c:correlation_decay}
\nc{c:correlation_convolution}

The first of them is a Gaussian field.
Fix first a function $q: \mathbb{Z}^2 \to \mathbb{R}_+$, with $q(o) > 0$ (where $o$ stands for the origin) that is symmetric under reflection about the vertical axis, that is,
\begin{equation}
\label{e:symmetry_q}
q(x_{1}, x_{2}) = q(-x_{1}, x_{2}),
\end{equation}
and suppose that it satisfies the bound
\begin{equation}
  \label{e:q}
  q(x) \leq \uc{c:correlation_decay} |x|^{-\beta}, \text{ for all $x \in \mathbb{Z}^2 \setminus \{o\}$},
\end{equation}
for some $\uc{c:correlation_decay}, \beta > 2$.

Consider now a Gaussian Field $g$ on $\mathbb{Z}^2$ whose covariance structure is given by
\begin{equation}
  \label{e:covariance}
  \Cov(g_x, g_y) = q * q(x - y) \leq \uc{c:correlation_convolution} |x - y|^{-\beta + 2},
\end{equation}
which is well defined as soon as we assume $q \in \ell^{2}(\Z^{2})$ (see \eqref{eq:construction_gf} for a precise construction of this field). See Figure~\ref{f:gf} for an illustration.

\begin{figure}[ht]
  \centering
  \includegraphics[trim=30 40 100 150,clip,width=.5\textwidth]{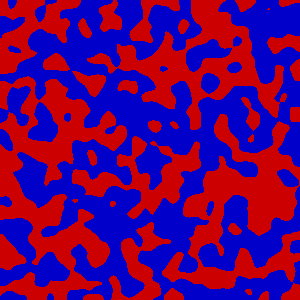}
  \caption{A simulation of the environment $f$ with  $q(x) = \exp\{-|x|^2/5\}$.
    Above $\{f > 0\}$ is represented in red, while $\{f \leq 0\}$ is represented in blue.}
   \label{f:gf}
\end{figure}

Perhaps the main example of Gaussian field satisfying these hypotheses is the Bargmann-Fock, which is obtained by choosing $q(x) = \big(\frac{2}{\pi}\big)^{\frac{1}{2}}e^{-|x|^{2}}$. Another example is the rational quadratic kernel $q(x)=(1+|x|^{2})^{-\frac{\alpha}{2}}$, for $\alpha > \frac{1}{2}$.
Let us finally mention that, although the $d$-dimensional discrete Gaussian Free Field (GFF) does not fall into the setting described above, our results also hold for a GFF on $\mathbb{Z}^d$ restricted to the two-dimensional Euclidean space as long as $d$ is large enough, see Remark~\ref{r:gff}.

This random environment is defined in more detail in Section~\ref{s:examples}, where we also prove that it satisfies the decoupling condition with decay as in \eqref{e:polynomial} with $\uc{c:alpha} = \beta-\tfrac{3}{2}$.
As a consequence we obtain the following corollary.

\begin{corollary}
  \label{t:gf}
 Suppose that $g$ is given by \eqref{e:covariance} with $q$ satisfying \eqref{e:q} with $\beta \geq \uc{c:alpha} + 3/2$.
  Then there exist $\uc{c:perturbative}, \xi > 0$ such that, if $\delta \leq \uc{c:perturbative}$, the random walk $ (X_n)_{n \in \N}$ introduced in \eqref{e:X_n} satisfies
  \begin{equation}
    \label{e:gf}
    \inf_{n \geq 1} \mathbb{P} \big( |X_n| \geq n^\xi \big) > 0.
  \end{equation}
\end{corollary}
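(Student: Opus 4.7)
The plan is to verify the four hypotheses of Theorem~\ref{t:main} for the field $f := \sign(g)$; this is equivalent to working with $f = g$ directly, since the walk \eqref{e:X_n} depends only on $\sign(f)$. The first three hypotheses are quick. Translation invariance \eqref{e:translation} holds because $\cov(g_x, g_y) = q * q(x-y)$ depends only on $x-y$. Reflection symmetry \eqref{e:symmetry} transfers from the assumption \eqref{e:symmetry_q} on $q$ to $q*q$ and hence to the law of $g$. The FKG inequality \eqref{e:fkg} is Pitt's theorem: since $q \geq 0$ we have $q*q \geq 0$, so the Gaussian family $(g_x)_{x \in \Z^2}$ has nonnegative covariances and is associated, and this property descends to the monotone image $\sign(g)$.

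The substance of the proof is the decoupling condition of Definition~\ref{def:decoupling}. Using the white-noise representation $g_x = \sum_{y \in \Z^2} q(y-x) W_y$ with i.i.d.\ standard Gaussians $(W_y)$, valid since $q \in \ell^2$ by \eqref{e:q}, construct $g^{C,r}$ with the following properties: inside $C$, each $g^{C,r}_x$ depends only on the local noise $\{W_y : |y-x| \leq r/2\}$, so that $g^{C,r}_{x_1}$ and $g^{C,r}_{x_2}$ are independent whenever $|x_1 - x_2| > r$; outside a suitable $r$-neighborhood of $C$, $g^{C,r}$ agrees with $g$; and the interface is arranged (for instance, by filling in the truncated tails with an additional independent noise field) so that \eqref{e:finite_range} holds jointly for $A \subset C$ and any $B \subset \Z^2$. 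Setting $f^{C,r} := \sign(g^{C,r})$ inherits the finite-range structure.

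To bound $\P(f^{C,r} \neq f)$, observe that the two fields can differ only in an $O(r)$-neighborhood of $C$, which contains $O((w+r)(h+r))$ points. At each such $x$, decompose $g_x = g^{C,r}_x + D_x$ with $D_x$ centered Gaussian independent of $g^{C,r}_x$ and with variance $\sigma_r^2 \lesssim \sum_{|z|>r/2} q(z)^2 \lesssim r^{-2\beta+2}$ by \eqref{e:q}. A sign flip forces $|D_x| \geq |g^{C,r}_x|$, so thresholding at a level $t$ gives $\P(\text{flip at } x) \leq \P(|g^{C,r}_x| \leq t) + \P(|D_x| > t) \lesssim t + \exp\bigl(-\Omega(t^2/\sigma_r^2)\bigr)$, using Gaussian anti-concentration together with $\mathrm{Var}(g^{C,r}_x) \gtrsim q(o)^2 > 0$ for large $r$. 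Optimizing in $t$ yields a per-point bound polynomially small in $r$; matching constants with the form of \eqref{e:polynomial} gives the decoupling with $\uc{c:alpha} = \beta - 3/2$. Theorem~\ref{t:main} then produces \eqref{e:gf}, completing the proof.

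The main obstacle is that Definition~\ref{def:decoupling} asks for exact equality $f^{C,r} = f$ on a high-probability event, which is incompatible with the continuity of a Gaussian field. Passing to the sign field, natural because the walk only reads $\sign(f)$, converts the question into one about sign flips between $g$ and its truncation; the Gaussian small-ball versus tail tradeoff then pins down the exponent $\beta - 3/2$ that appears in the corollary's hypothesis on $\beta$. A secondary but nontrivial technical point is ensuring the finite-range property \eqref{e:finite_range} holds against reference sets $B$ that may lie outside $C$, which forces a careful choice of the coupling rather than a naive truncation of the white-noise sum.
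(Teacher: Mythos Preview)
Your overall strategy---verify \eqref{e:translation}, \eqref{e:symmetry}, \eqref{e:fkg} directly and then establish the decoupling condition via the white-noise representation, truncation at radius $r/2$, and a per-point sign-flip bound from the Gaussian small-ball/tail tradeoff---is exactly the paper's approach, and the exponent $\beta - 3/2$ you extract is correct.

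There is, however, a genuine gap in your coupling construction. You want $g^{C,r}$ to agree with $g$ outside an $r$-neighborhood of $C$, but this is incompatible with \eqref{e:finite_range}: for $B$ far from $C$ you would have $g^{C,r}|_B = g|_B$, which depends on \emph{all} the white-noise variables $W_y$, including those within distance $r/2$ of $A \subset C$ that also determine $g^{C,r}|_A$. The ``filling in with independent noise'' suggestion does not rescue this: adding independent tails to the truncated field inside $C$ does nothing about the dependence of $g|_B$ on the $W_y$ near $A$, and replacing those $W_y$ by independent copies when computing $g^{C,r}|_B$ would mean $g^{C,r}$ no longer equals $g$ outside, contradicting your premise and invalidating the claim that the fields differ only in an $O(r)$-neighborhood.

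The paper's fix is to never set $f^{C,r} = f$ anywhere. Instead, for $x$ with $\d(x,C) \leq r$ one uses the truncation at radius $r$, while for $x$ with $\d(x,C) > r$ one uses the truncation at radius $\d(x,C)$. This makes the finite-range property automatic (the noise supports used for $A \subset C$ and for any $B$ with $\d(A,B) > r$ are disjoint), and the per-point error $\P(f(x) \neq f^{C,r}(x)) \lesssim \d(x,C)^{-\beta+3/2}$ is summable over all of $\Z^2$, still giving $\varepsilon(w,h,r) \lesssim (wh + (w+h)r + r^2) r^{-\beta+3/2}$.
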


\bigskip

The second environment which we consider is given by a random coloring of the plane induced by a collection of overlapping balls of random radii.
It may be regarded as a symmetric version of the Boolean percolation, also inspired by Confetti Percolation \cite{hirsch2015harris} and the Dead Leaves Model \cite{bordenave2006dead,jeulin2021dead}.
Let us first provide a brief, informal description.

\nc{c:decay_nu}
Fix a probability measure $\nu$ on $\mathbb{R}_+$ for which there exists a positive constant $\uc{c:decay_nu}$ such that
\begin{equation}
  \label{e:nu_decay}
  \nu\big( [r, \infty) \big) \leq \uc{c:decay_nu} r^{-\alpha}, \text{ for all } r \geq 0,
\end{equation}
with some $\alpha > 2$.
The measure $\nu$ will control the radius distribution of the balls composing our environment and consequently its range of dependence.
We will introduce a Poisson Point Process $(x_i)_{i \geq 1}$ on $\mathbb{R}^2$ with density one, and place a ball centered at each point $x_i$ with independent radius $r_i$ distributed according to $\nu$.
We then independently assign to each ball one of two colors; \emph{blue} (represented by $-1$) or \emph{red} (represented by $1$) with probability $1/2$ each.
Next we define a coloring of the whole plane $f: \mathbb{R}^2 \to \{-1, 0, 1\}$ through the following rule: if a point $x \in \mathbb{R}^2$ is not contained in any of the above balls, we paint it \emph{gray} (which will be encoded by $0$), otherwise we paint it with the color of a ball chosen uniformly among all balls containing $x$.

\definecolor{mypurple}{rgb}{.66,.27,.74}
\definecolor{myred}{rgb}{.86,.3,.46}
\definecolor{myblue}{rgb}{.36,.29,.89}

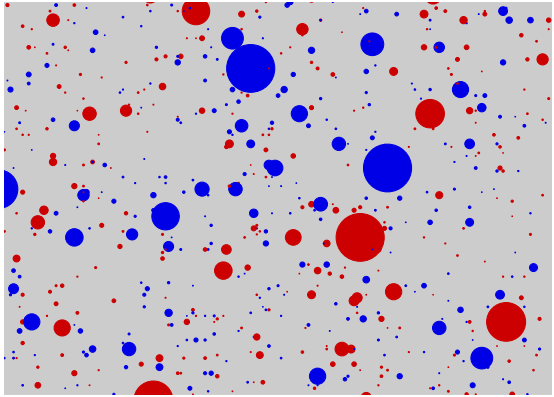
\begin{figure}[ht]
  \centering
  \begin{tikzpicture}[scale=2]
    \begin{scope}
      \clip (-.6, -.6) rectangle (3, 2);
      \draw[color=gray!40!white,fill=gray!40!white] (-.6, -.6) rectangle (3, 2);
      \foreach \z in {
36, 64, 52, 366, 30, 22, 25, 184, 86, 147, 11, 76, 6, 22, 3, 105, 141, 34, 16, 62, 22, 478, 57, 29, 682, 4, 20, 42, 35, 140, 37, 3, 61, 21, 36, 128, 47, 25, 13, 26, 11, 8, 11, 48, 17, 25, 71, 0, 16, 250, 301, 25, 34, 36, 19, 189, 86, 16, 2, 4, 47, 23, 19, 35, 19, 32, 135, 31, 391, 52, 61, 21, 7, 106, 31, 289, 43, 68, 463, 46, 89, 160, 415, 123, 31, 30, 1, 197, 66, 277, 201, 53, 74, 57, 140, 4, 31, 96, 34, 7, 18, 29, 3, 66, 21, 861, 10, 94, 144, 119, 18, 33, 100, 23, 10, 220, 11, 144, 133, 92, 40, 826, 2, 2, 8, 7, 519, 0, 15, 8, 110, 32, 66, 173, 5, 154, 1, 105, 98, 33, 138, 16, 90, 10, 30, 67, 11, 49, 28, 5, 26, 650, 82, 29, 10, 180, 225, 37, 58, 111, 4, 86, 117, 12, 8, 98, 64, 75, 4, 15, 121, 46, 41, 330, 124, 5, 19, 35, 40, 54, 0, 393, 25, 602, 54, 15, 240, 147, 29, 12, 88, 81, 25, 15, 2, 4, 73, 18, 13, 1461, 43, 480, 1, 152, 15, 19, 47, 222, 19, 172, 42, 28, 32, 33, 14, 24, 30, 41, 91, 7, 100, 61, 224, 81, 178, 0, 105, 204, 98, 190, 324, 47, 93, 26, 27, 1180, 10, 40, 51, 55, 48, 5, 14, 285, 31, 22, 57, 2, 32, 52, 32, 2, 24, 54, 1, 9, 97, 50, 4, 100, 1, 28, 14, 75, 51, 16, 403, 1, 11
      } {
      \foreach \e in {1,2} {
       \pgfmathrandominteger{\x}{0}{200};
        \pgfmathrandominteger{\y}{0}{200};
        \pgfmathrandominteger{\c}{1}{1000};
        \pgfmathrandominteger{\d}{0}{1};

        \pgfmathsetmacro{\radius}{.0001*(\z+20)}%

        \draw[color=blue!90!black,fill=blue!90!black]
                  ({.02*\x-1}, {.02*\y-1}) circle ({\radius});

        \pgfmathrandominteger{\x}{0}{200};
        \pgfmathrandominteger{\y}{0}{200};
        \pgfmathrandominteger{\c}{1}{1000};
        \pgfmathrandominteger{\d}{0}{1};

        \draw[color=red!80!black,fill=red!80!black]
                  ({.02*\x-1}, {.02*\y-1}) circle ({\radius});
}}
    \end{scope}
  \end{tikzpicture}
  \caption{An illustration of the environment $f$, where the radius distribution is given by $R = \exp\{Z\}$, $Z \sim \Exp(2)$.}
   \label{f:confetti}
\end{figure}

In Section~\ref{s:examples} we also prove that the Confetti random environment also satisfies the hypotheses of Theorem~\ref{t:main} with
\begin{equation}
  \varepsilon(w, h, r) = c(wh + (w + h)r + r^2)r^{-\alpha},
\end{equation}
implying the following result.

\begin{corollary}
  \label{t:confetti}
  Suppose that $f$ is the coloring given by the Confetti random environment defined above, for $\alpha$ large enough.
  Then there exist $\uc{c:perturbative}, \xi > 0$ such that if $\delta \leq \uc{c:perturbative}$,
  \begin{equation}
    \inf_{n \geq 1} \mathbb{P} \big( |X_n| \geq n^\xi \big) > 0.
  \end{equation}
\end{corollary}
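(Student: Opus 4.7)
My plan is to deduce Corollary~\ref{t:confetti} as a direct consequence of Theorem~\ref{t:main}. This reduces the problem to verifying that the Confetti environment $f$ satisfies the four hypotheses \eqref{e:translation}, \eqref{e:symmetry}, \eqref{e:fkg} and the polynomial decoupling \eqref{e:polynomial}, with $\alpha$ chosen at least as large as the universal constant $\uc{c:alpha}$. Translation invariance \eqref{e:translation} is immediate from the translation invariance of the intensity-one Poisson point process on $\R^{2}$ together with the i.i.d.\ nature of the marks $(r_i, c_i)$; reflection symmetry \eqref{e:symmetry} follows because $x\mapsto -x$ preserves Lebesgue measure and Euclidean balls, and the law of the colors $c_i \in \{-1,+1\}$ is symmetric.

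For \eqref{e:fkg}, I would represent $f$ as a deterministic, pointwise non-decreasing function of an independent family consisting of the colors $(c_i)$, the PPP with its radii, and, for each site $z\in\Z^{2}$, an independent uniform $U_{z}\in[0,1]$ used to select measurably one ball among those covering $z$. Conditionally on the geometric data $\bigl((x_i),(r_i),(U_z)\bigr)$, the field $f$ is pointwise monotone in the product measure $(c_i)$, so the classical Harris-FKG inequality yields conditional positive association. Since for any increasing event $A$ in $f$, the conditional probability $\P\bigl(f\in A\bigm|(x_i),(r_i),(U_z)\bigr)$ is measurable with respect to the geometric data, which is independent of $(c_i)$, averaging preserves the inequality and produces unconditional \eqref{e:fkg}.

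The main quantitative step is the decoupling bound. For a rectangle $C=[a,a+w]\times[b,b+h]$ and cutoff $r\geq 2$, I define $f^{C,r}$ by the same construction as $f$ but keeping only the Poisson points whose radius $r_i$ does not exceed $r/2$. Every ball in this truncated process has diameter at most $r$, so for any $A\subset C$ and $B\subset \Z^{2}$ with $\d(A,B)>r$ the restrictions $f^{C,r}|_{A}$ and $f^{C,r}|_{B}$ are measurable with respect to disjoint sets of independent marks, proving \eqref{e:finite_range}. Inside $C$, the fields agree unless some ball of radius exceeding $r/2$ intersects $C$; by the Mecke formula, the expected number of such balls is bounded by
\begin{equation*}
  \int_{r/2}^{\infty}\bigl|\{y\in\R^{2}:\d(y,C)\leq R\}\bigr|\,\nu(\mathrm{d}R)\leq c\int_{r/2}^{\infty}\bigl(wh+(w+h)R+R^{2}\bigr)\,\nu(\mathrm{d}R).
\end{equation*}
Using \eqref{e:nu_decay} with $\alpha>2$ and integration by parts, the three integrals are bounded by a constant multiple of $r^{-\alpha}$, $r^{1-\alpha}$ and $r^{2-\alpha}$ respectively, yielding $\P(f^{C,r}\neq f)\leq c(wh+(w+h)r+r^{2})r^{-\alpha}$, which is exactly the polynomial decay \eqref{e:polynomial}.

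The hardest ingredient is the verification of \eqref{e:fkg}, because the uniform random selection among covering balls at each site entangles color and geometric randomness in a way that does not obviously preserve monotonicity of $f$ in the colors; the representation above via auxiliary uniforms $U_{z}$ resolves this, but care is needed to set up a jointly measurable, consistent selection rule that preserves the symmetry and translation invariance already established. Once all four hypotheses are verified, choosing $\alpha$ large enough that $\alpha\geq\uc{c:alpha}$ enables Theorem~\ref{t:main} and yields the corollary.
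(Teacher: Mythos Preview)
Your overall plan—verify \eqref{e:translation}, \eqref{e:symmetry}, \eqref{e:fkg} and Definition~\ref{def:decoupling} with decay \eqref{e:polynomial}, then invoke Theorem~\ref{t:main}—is exactly what the paper does, and your treatment of the symmetries and the Mecke computation is correct. However, the two substantive verifications both contain real gaps.

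\textbf{The FKG step.} Conditioning on the geometric data $G=((x_i),(r_i),(U_z))$ and applying Harris--FKG to the i.i.d.\ colours does yield $\P(A\cap B\mid G)\ge\P(A\mid G)\,\P(B\mid G)$. But ``averaging preserves the inequality'' is not justified: integrating gives $\P(A\cap B)\ge\E\bigl[\P(A\mid G)\P(B\mid G)\bigr]$, and to reach $\P(A)\P(B)$ you still need $\P(A\mid G)$ and $\P(B\mid G)$ to be \emph{positively correlated} under the law of $G$. There is no monotone or FKG structure on your $G$ that delivers this; for instance, with two sites covered either by the same selected ball or by different ones, the conditional probabilities of $A=\{f(z_1)=1\text{ or }f(z_2)=1\}$ and $B=\{f(z_1)=f(z_2)=1\}$ move in opposite directions, so their covariance in $G$ is negative. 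The paper uses a different decomposition: split $\omega=\omega_{+}+\omega_{-}$ by colour and observe that the max-weight selection rule makes $f$ increasing in $\omega_{+}$ and decreasing in $\omega_{-}$ (adding a red ball can only raise $f$, a blue one only lower it). One then applies FKG for Poisson point processes in $\omega_{-}$ conditionally on $\omega_{+}$, notes that the resulting conditional probabilities are monotone in $\omega_{+}$, and applies Poisson-FKG a second time. Your site-attached uniforms $U_z$ also do not reproduce the paper's ball-attached weights $w_i$, so the monotonicity in $(\omega_{+},\omega_{-})$ is not visible in your representation.

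\textbf{The decoupling step.} Your field $f^{C,r}$ discards \emph{every} ball of radius exceeding $r/2$, globally. This gives \eqref{e:finite_range} immediately, but it breaks \eqref{e:likely_equal}: almost surely some lattice site of $\Z^{2}$ has its selected ball removed, so $\P(f^{C,r}\neq f)=1$ regardless of $w,h,r$. You only checked agreement ``inside $C$'', but Definition~\ref{def:decoupling} asks for equality of the fields on all of $\Z^{2}$. The paper's fix is to discard only those balls of radius at least $r$ \emph{that intersect $C$}; then $\{f^{C,r}\neq f\}$ forces such a ball to exist, and your Mecke bound applies verbatim to control that probability. The finite-range property \eqref{e:finite_range} still holds because any retained ball touching $A\subset C$ has radius below $r$, while any retained ball touching $B$ either has small radius or does not touch $C\supset A$ at all.
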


\paragraph{Previous works.}

When a random walk evolves on top of a static random environment, different phenomena emerge, depending on the regime considered. In fact, if the random walk turns out to be recurrent, it exhibits anomalous fluctuations of order $\log^{2}n$, as verified in~\cite{sinai_clt, kesten_limit_rwsre}. Under additional hypotheses,~\cite{kks} verifies that the random walk has Gaussian fluctuations in the transient phase.

If the environment is Markovian and features a positive spectral gap in $\textnormal{L}^2$, analytical techniques can be successfully used to study perturbative random walks, see \cite{Avena2017}.
Some very robust results can also be obtained in a model-by-model basis using techniques that rely on particular properties of the environment, such as \cite{mountford2015,berard2016fluctuations,zbMATH06514478}.
Some techniques can be applied in general for environments that mix polynomially but non-uniformly, such as in~\cite{10.1214/19-AOP1414}.
However they only provide a law of large numbers for the random walk.
Let us also mention the works~\cite{abf, oriane_antisymmetry}, where the relation $-v(\delta) = v(-\delta)$ for the asymptotic speed of random walk defined in~\eqref{e:X_n} with $\delta$ and $-\delta$ is established for a large class of reversible environments, without assumptions on invariance under space reflections.

The case when the environment is a symmetric exclusion process in equlibrium has received great attention.
Under appropriate rescaling of space and time,~\cite{afjv} characterizes the hydrodynamic limit for such walks.
Fluctuation bounds and large deviation principles for these rescaled processes were then obtained in~\cite{jm, ajv}. Without any rescaling,~\cite{HKT19} proves that, for each two fixed nearest-neighbor transition probabilities of the random walk, there exist at most two possible densities of the exclusion process for which the random walk does not satisfy a law of large numbers.

A particular type of symmetry hypothesis poses significant challenge to the study of these models.
If the annealed law of the random walk and the environment is ergodic and invariant with respect to reflections over the space direction, the random walk fails to present ballistic behavior (see Remark~\ref{r:recurrent}), a key assumption explored in previous works~\cite{zbMATH06514478, huveneers2015random}.
The study of symmetric random walks on top of environments given by conservative particle systems emphasizes the poor mixing properties of these environments to such an extent that, to the best of our knowledge, no rigorous results on the fluctuations of such random walks in the so-called non-nestling case are known.
In the particular case of balanced random walks, we remark that a quenched central limit theorem was proved in~\cite{dgr}.
It is striking that for such models even simulations seem to reach conflicting predictions on whether such random walks are diffusive \cite{avena2012continuity,huveneers2018response}.

Random walks in cooling random environment were introduced in~\cite{avena_denhollander} as an interpolation between the static and dynamical cases.
In this model, a sequence of deterministic refreshing times is introduced, the environment is independently resampled at these times and kept constant between two resamplings.
The behavior of the random walk at the level of fluctuations is highly influenced by how fast the growth of these refreshing times is.
If the increments of these times do not diverge, Gaussian fluctuations are observed in~\cite{avena_denhollander}.
In the case where these increments grow polynomially or exponentially,~\cite{avena_denhollander} still verifies the existence of Gaussian fluctuations but with no diffusive scaling.
In this last case,~\cite{rwcre4} strengthens the results to a functional central limit theorem, with limit given by a time modification of a Brownian motion.
The works \cite{rwcre3, rwcre5} consider different types of increment growth for the refreshing times and observe a different plethora of phenomena regarding the fluctuation regimes of these random talks, ranging from pure Gaussian fluctuations to mixtures of stable laws.

Finally, let us mention~\cite{huveneers2020evolution}, where the behavior of a random walk driven by a random environment given as the time evolution of the heat equation with initial condition given by a one-dimensional Brownian motion. In this case, the random walk presents Gaussian subdiffusive behavior in the short time limit and diffusive behavior in the long time limit.

\paragraph{Overview of the proofs.}

The main results and techniques employed in the paper are inspired by renormalization arguments from percolation theory.
Using percolation techniques to study random walks on dynamic random environments is not new, see \cite{zbMATH06514478,10.1214/19-AOP1414,HKT19} for some examples of this interplay.
The novelty that appeared in this article was to use Russo-Seymour-Welsh type arguments in order to control the fluctuations of the random walk.

The first step of the proof is to prove a decoupling bound for the random environment, see Definition~\ref{def:decoupling} and Lemmas~\ref{l:error_decoupling} and~\ref{l:error_decoupling_confetti}.
The decoupling introduced in Definition~\ref{def:decoupling} seems to be new and of independent interest.

Having established this decoupling, we introduce a graphical construction of the random walk that allows us to start many walks from different space-time points in the plane, in such a way that their trajectories cannot cross each other.
With this graphical construction we are able to define events that are very similar to crossing events in oriented percolation models inspired by \cite{D-CTT18}, see Section~\ref{s:crossing}.

At this point, problems concerning the random walk can be written in terms of box-crossing estimates.
This motivates us to prove Russo-Seymour-Welsh (RSW) estimates (Propositions~\ref{l:v_rsw} and \ref{p:h_rsw}), which in turn provide us with the desired bound on the fluctuations of the random walk.

Let us finally explain where in the proof we require the random walk to be weakly coupled with the environment (see the constant $\delta$ in Theorem~\ref{t:main}).
The decoupling that we require for the underlying random environment does not work for two sets that are very long and near one another.
But during our proof, it becomes necessary to decouple two boxes of height $t$ that are separated by (roughly speaking) $\sqrt{\Var(X_t)}$.
This means that, in order to prove a lower bound on the fluctuations at time $2t$, we need as an input a lower bound on the fluctuations at time $t$.
By using a perturbative argument, we can compare the random walk in random environment to a simple random walk, obtaining a lower bound on its fluctuations at the initial scale and bootstrap the estimate from there.

\paragraph{Open Problems.}

While writing this article we came across a series of interesting open questions, some of which we list below.
\begin{enumerate}[\quad a)]
\item Is it possible to remove the perturbative assumption $\delta \leq \uc{c:perturbative}$ in Theorem~\ref{t:main}?
\item What about dropping other assumptions, for example concerning plannarity, symmetry or FKG inequality?
\item Concerning upper bounds on the fluctuations of $X_n$, is it true that there exists $\xi > 0$ such that $\lim_{n} \mathbb{P}\big( |X_{n}| \geq n^{1 - \xi} \big) = 0$?
\item With a strong mixing assumption on the environment (say large $\uc{c:alpha}$ or small $\varepsilon$ in Definition~\ref{def:decoupling}), is one able to show diffusiveness and possibly a CLT for $X_n$?
\item Is there an example of random environment that fits the main assumptions of our paper, but for which one expects to observe anomalous diffusions (sub or super-diffusive)?
\end{enumerate}

\begin{remark}
\label{r:constants}
Throughout the text, numbered constants like $c_0, c_1,\ldots$ will have their values fixed at their first appearance and will remain unchanged from that point on.
\end{remark}

\paragraph{Organization.}

The rest of the paper is organized as follows.
The graphical construction of the random walk is presented in Section~\ref{notation}.
Section~\ref{s:crossing} contains the introduction of the main class of events used in the proof of Theorem~\ref{t:main}, namely, the crossing events.
In Section~\ref{s:rsw} our main technical result, a Russo-Seymour-Welsh type of estimate, is proved.
The proof of Theorem~\ref{t:main} is then concluded in Section~\ref{s:bootstrap}.
Finally, we verify that the environments given by the Gaussian field and the Confetti percolation satisfy the hypotheses of Theorem~\ref{t:main} in Section~\ref{s:examples}, establishing Corollaries~\ref{t:gf} and~\ref{t:confetti}.

\paragraph{Acknowledgements.}

RB has counted on the support of the Mathematical Institute of Leiden University and ``Conselho Nacional de Desenvolvimento Científico e Tecnológico – CNPq'' grant ``Produtividade em Pesquisa'' (308018/2022-2).
The research of MH was partially supported by CNPq grants ``Projeto Universal'' (406659/2016-8) and ``Produtividade em Pesquisa'' (312227/2020-5) and by FAPEMIG grant ``Projeto Universal'' (APQ-01214-21).
The research of DK was partially supported by the EPSRC grant EP/V00929X/1.
During this period, AT has also been supported by grants ``Projeto Universal'' (406250/2016-2) and ``Produtividade em Pesquisa'' (304437/2018-2) from CNPq and ``Jovem Cientista do Nosso Estado'' (202.716/2018) from FAPERJ.

\section{Graphical construction of the random walk}
\label{notation}
~
\par Here we use a graphical construction in order to define a family of coupled continuous space-time random walks $(X^u_t, u \in \mathbb{R}^2, t \ge 0)$.
Here $u$ represents the starting point of the walk on the plane, while $t$ controls its continuous time evolution.

\nc{c:ellipticity}
We also impose an extra condition that acts as a form of \emph{uniform ellipticity} for the random walk.
Suppose that.
\begin{equation}
  \label{e:ellipticity}
  \delta < \frac{1}{2} - \uc{c:ellipticity},  \text{ for some $\uc{c:ellipticity} \in \Big( 0,\frac12 \Big)$},
\end{equation}
which readly implies that
\begin{equation}
  \label{e:unif_ellipticity_0}
  \mathbb{P}^{f} \big( X_1 \geq X_0 + 1 \big) \geq \uc{c:ellipticity}, \quad \text{uniformly over $f$.}
\end{equation}
We will also write $\mathbb{P}$ for the annealed law of the random walk $(X_n)_{n \geq 0}$.

We now informally state the properties of this coupling that will be useful later on.
Consider the canonical projections $\pi_1, \pi_2: \mathbb{R}^2 \to \mathbb{R}$. Each random walk $X^u := (X^u_t)_{t \geq 0}$ will be such that, $X^u_0 = \pi_1(u)$ almost surely.
Moreover, $X^{u'}$ and $X^u$ coalesce if they ever intersect, that is if $X^{u'}_{t'} = X^u_t$  for some $u, u' \in \mathbb{R}^2$ and $t, t' \geq 0$, then $X^{u'}_{t' + s} = X^u_{t + s}$ for all $s \geq 0$.
Finally, $(X^{(0, 0)}_n)_{n \in \mathbb{N}}$ will have the same law as $(X_n)_{n \in \mathbb{N}}$ defined in~\eqref{e:X_n}.
Knowing what we are attempting to define, we can jump into the rigorous definition of the process.
This graphical construction is inspired by the one appearing in \cite{HKT19}.

As $(X_n)_{n \in \mathbb{N}}$ is assumed to be a nearest-neighbor random walk, $X_{2n} \in 2 \mathbb{Z}$ and $X_{2n + 1} \in \left( 1 + 2\mathbb{Z} \right)$, for every $n \geq 0$.
This motivates us to define the discrete lattice
\begin{equation}
\label{e:def_space_time}
\mathbb{L}_d:=\left( 2\mathbb{Z} \right)^2\cup  \left( (1,1)+\big(2\mathbb{Z}\right)^2 \big),
\end{equation}
where the sum in the right-hand side stands for the the shift of the $2\mathbb{Z}$ lattice by the vector $(1,1)$.
We will first define the random walks $X^u$ for $u \in \mathbb{L}_d$ and later we will interpolate this definition appropriately for any $u \in \mathbb{R}^2$. 
Recall that $f: \mathbb{Z}^2 \to \mathbb{R}$ is a random environment sampled according to $\P$.
Let $\left(U_u\right)_{u\in\mathbb{L}_d}$ be a collection of i.i.d.\ uniform random variables on $[0,1]$.
For any  $u=(x,n)\in \mathbb{L}_d$, we set $X^u_0=x$ and define $X^u_1$ in the following manner:
\begin{equation}
  \label{e:def_X}
  \begin{split}
    X^u_1 & =
    \begin{cases}
      x+2\, \mathbf{1}_{\{U_u \ge 1/2\}} - 1, & \text{ if $f(x, n) = 0$};\\
      x+2\, \mathbf{1}_{\{U_u \ge 1/2 - \delta\}} - 1, & \text{ if $f(x, n) = 1$};\\
      x+2\, \mathbf{1}_{\{U_u \ge 1/2 + \delta\}} - 1, & \text{ if $f(x, n) = -1$};\\
    \end{cases}\\
    & = x + 2 \mathbf{1}_{\{ U_u \geq 1/2 - \delta f(x, n) \}} - 1.
  \end{split}
\end{equation}

Define now inductively for $m\ge1$
\begin{equation}
X^u_m=X^u_{m-1}+X^{(X^u_{m-1},\pi_2(u)+m-1)}_1.
\end{equation}
This defines the coupled family $(X^u_n,u\in\mathbb{L}_d,n\in\mathbb{N})$.
Note that $(X^u_n,\pi_2(u)+n)_{n\in \mathbb{N}}$ evolves on $\mathbb{L}_d$.

In order to extend the random walkers to all positive times, we will join the points of $\mathbb{L}_d$ with continuous edges, through the definition
\begin{equation}
\mathbb{L} = \Big\{ u + t(1, 1); u \in \mathbb{L}_d, 0 \le t < 1 \Big\} \cup \Big\{ u + t(-1, 1); u\in\mathbb{L}_d, 0 \le t < 1 \Big\},
\end{equation}
see Figure~\ref{f:L_d}.

For $t\in\mathbb{R}^+$ and $u\in\mathbb{L}_d$, define
\begin{equation}\label{badcompany}
X^u_t=X^u_{\lfloor t\rfloor}+\left(t-\lfloor t\rfloor\right)\big(X^u_{\lfloor t\rfloor+1}-X^u_{\lfloor t\rfloor}\big),
\end{equation}
which defines the coupled family $(X^u_t,u\in\mathbb{L}_d,t\in\mathbb{R}_+)$.
Note that, for each $u \in \mathbb{L}_{d}$, $(X^u_t,\pi_2(u)+t)_{t\in \mathbb{R}_+}$ evolves on $\mathbb{L}$.

\begin{figure}[ht] 
  \centering
  \begin{tikzpicture}[scale=0.35]
    \draw[step=10ex,color=gray!20,very thin] (-49.9ex,0.1ex) grid (49.9ex,99.9ex);
    \foreach \x in {-2,...,2}{
      \draw[color=blue!80!black, thin] (-20ex*\x,0ex) -- (-50ex,50ex-20ex*\x) -- (20ex*\x,100ex) -- (50ex,50ex+20ex*\x) -- cycle;
      \draw[thin, color=gray] (-20ex*\x,0ex) circle (0.7ex);
    }
    \node[below] at (0,0){$(0,0)$};
    \filldraw[color=red!80!black, thick] (0,0) circle (0.7ex) -- (-10ex,10ex) circle (0.7ex) -- (0,20ex) circle (0.7ex) -- (-10ex,30ex) circle (0.7ex) -- (0,40ex) circle (0.7ex) -- (-10ex,50ex) circle (0.7ex) -- (-20ex,60ex) circle (0.7ex) -- (-30ex,70ex) circle (0.7ex) -- (-20ex,80ex) circle (0.7ex) -- (-30ex,90ex) circle (0.7ex) -- (-40ex,100ex) circle (0.7ex);
    \filldraw[color=red!80!black, thick] (-12ex,3.5ex) circle (0.4ex) -- (-12ex,8ex) circle (0.4ex) -- (-10ex,10ex);
    \filldraw[color=red!80!black, thick] (8ex,3.5ex) circle (0.4ex) -- (8ex,8ex) circle (0.4ex) -- (10ex,10ex) circle (0.7ex) -- (0,20ex);
    \filldraw[color=red!80!black, thick] (-18ex,25ex) circle (0.4ex) -- (-18ex,38ex) circle (0.4ex) -- (-20ex,40ex) circle (0.7ex) -- (-10ex,50ex);
  \end{tikzpicture}
  \caption{In blue we represent the lattice $\mathbb{L}$, observing that its intersection points are the vertices of $\mathbb{L}_d$.
    Random walk trajectories are depicted in red and one can observe their collision behavior.}
  \label{f:L_d}
\end{figure}
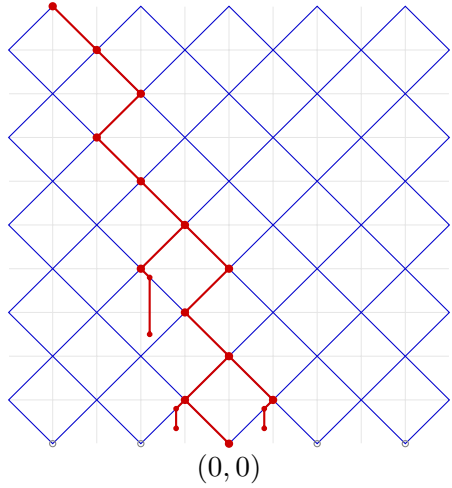

From a starting point $u\in\mathbb{L}\setminus\mathbb{L}_d$, intuitively speaking, we let $X_t^u$ follow the only path such that $(X_t^u, \pi_2(u)+t)$ remains on $\mathbb{L}$ until it hits $\mathbb{L}_d$, after which it follows the rule given by \eqref{badcompany}.
More precisely, given $u \in \mathbb{L} \setminus \mathbb{L}_d$, for any $s>0$, note that $u+s\big((-1)^{k},1\big)\in\mathbb{L}$, where $k=k(u)=\lfloor \pi_1(u)\rfloor+\lfloor \pi_2(u)\rfloor$, and set
\begin{equation}
t_0=\min\big\{s\ge 0: u+s((-1)^{k},1)\in\mathbb{L}_d\big\}.
\end{equation}
We then define
\begin{equation}
\label{horriblecompany}
X^u_t =
\begin{cases}
\pi_1(u)+(-1)^{k}t  & \text{ if $0\le t \leq t_0$};\\
X^{(X^u_{t_0},\pi_2(u)+t_0)}_{t-t_0} & \text{ if $t > t_0$}.
\end{cases}
\end{equation}

Finally, it remains to construct the random walks starting from points $u=(x,t)\in\mathbb{R}^2 \setminus \mathbb{L}$.
The idea is very simple: its trajectory is such that $(X^u_t,\pi_2(u)+t)$ moves up along the direction $(0,1)$ until hitting $\mathbb{L}$ and, from this point on, follows the corresponding trajectory in $\mathbb{L}$ as defined in \eqref{horriblecompany}.

More precisely, given $u=(x,t)\in\mathbb{R}^2$, let
\begin{equation}
s_0=\min\{s\ge 0: (x,t+s)\in\mathbb{L}\}
\end{equation}
and define
\begin{equation}
\label{awfulcompany}
X^u_s =
\begin{cases}
x &  \text{ if $0\le s < s_0$};\\
X^{(x,\pi_2(u)+s_0)}_{s-s_0} & \text{ if $s\ge s_0$}.
\end{cases}
\end{equation}

Equations \eqref{badcompany}, \eqref{horriblecompany}, and \eqref{awfulcompany} define the coupled family $(X^u_t,u\in\mathbb{R}^2,t\in \mathbb{R}_+)$, such that the points $(X^u_t, \pi_2(u)+t)$ always remain on $\mathbb{L}$, after the first time they hit~$\mathbb{L}$.

We also use that, for any $u\in\mathbb{R}^2$ and $t \ge s\ge 0$,
\begin{equation}
  \label{e:lipschitz}
  \left|X^u_t - X^u_s\right| \le t - s.
\end{equation}
Let us now introduce a procedure that will allow us to reflect the the random walk over vertical lines.
Given $a \in \R$ and a random walk trajectory $X^u$ starting to the left of $a$ (i.e. $\pi_{1}(u) \leq a$), we define the random walk reflected at $a$ by forbidding $X^u$ to cross the vertical line $\{x=a\}$ from left to right.
This is done in as follows:
The reflected random walk follows exactly the same rules for the evolution of $X^u$, except that whenever it touches the vertical line $\{x=a\}$ coming along a line segment of the lattice $\mathbb{L}$ oriented in the northeast direction, instead of simply following that segment (as $X$ would do), the new reflected random walk simply travels straight upwards along $\{x=a\}$ until hitting a line segment of $\mathbb{L}$ oriented on the northwest direction.
From that point on, it obeys to the same rules introduced above for $X$ until the next time it will hit the vertical line $\{x=a\}$.
Furthermore, in case the line $\{x=a\}$ contains points of $\mathbb{L}_{d}$ and the random walk $X^u$ should take a jump to the right at any of these points, this jump is suppressed and the reflected random walk travels upwards instead, until the first node of $\mathbb{L}_{d}$ that sends the random walk to the left of the line $\{x=a\}$.
Analogous definitions hold for right reflections.
See Figure~\ref{fig:bouncing} for a representation of these trajectories.

\begin{figure}[ht]
  \centering
  \begin{subfigure}{.45\textwidth}
  \begin{tikzpicture}[scale=0.35]
    \draw[step=10ex,color=gray!20,very thin] (-49.9ex,0.1ex) grid (49.9ex,99.9ex);
    \foreach \x in {-2,...,2}{
      \draw[color=blue!80!black, thin] (-20ex*\x,0ex) -- (-50ex,50ex-20ex*\x) -- (20ex*\x,100ex) -- (50ex,50ex+20ex*\x) -- cycle;
      \draw[thin, color=gray] (-20ex*\x,0ex) circle (0.7ex);
    }
    \draw[thick, black](17ex, 0) -- (17ex, 100ex);

    \filldraw[color=red!80!black, thick] (8ex,3.5ex) circle (0.4ex) -- (8ex,8ex) circle (0.7ex) -- (10ex,10ex) circle (0.7ex) -- (17ex,17ex) circle (0.7ex) -- (17ex,23ex) circle (0.7ex)  -- (10ex, 30ex) circle (0.7ex) -- (0,40ex) circle (0.7ex) -- (10ex,50ex) circle (0.7ex) -- (17ex,57ex) circle (0.7ex) -- (17ex,63ex) circle (0.7ex) -- (10ex,70ex) circle (0.7ex) -- (0,80ex) circle (0.7ex) -- (-10ex,90ex) circle (0.7ex) -- (0,100ex) circle (0.7ex);
  \end{tikzpicture}
  \end{subfigure}
  \begin{subfigure}{.45\textwidth}
  \begin{tikzpicture}[scale=0.35]
    \draw[step=10ex,color=gray!20,very thin] (-49.9ex,0.1ex) grid (49.9ex,99.9ex);
    \foreach \x in {-2,...,2}{
      \draw[color=blue!80!black, thin] (-20ex*\x,0ex) -- (-50ex,50ex-20ex*\x) -- (20ex*\x,100ex) -- (50ex,50ex+20ex*\x) -- cycle;
      \draw[thin, color=gray] (-20ex*\x,0ex) circle (0.7ex);
    }
    \draw[thick, black](20ex, 0) -- (20ex, 100ex);

    \filldraw[color=red!80!black, thick] (8ex,3.5ex) circle (0.4ex) -- (8ex,8ex) circle (0.7ex) -- (10ex,10ex) circle (0.7ex) -- (20ex,20ex) circle (0.7ex) -- (10ex,30ex) circle (0.7ex)  -- (20ex, 40ex) circle (0.7ex) -- (20ex,60ex) circle (0.7ex) -- (10ex,70ex) circle (0.7ex) -- (0,80ex) circle (0.7ex) -- (-10ex,90ex) circle (0.7ex) -- (0,100ex) circle (0.7ex);
  \end{tikzpicture}
  \end{subfigure}
  \caption{The graphical construction of the random walk with left reflection at the black vertical line. In the first image, the vertical line does not intersect $\mathbb{L}_{d}$. The second line contains points with integer coordinates and the reflection behaves slightly different in these points.}
  \label{fig:bouncing}
\end{figure}
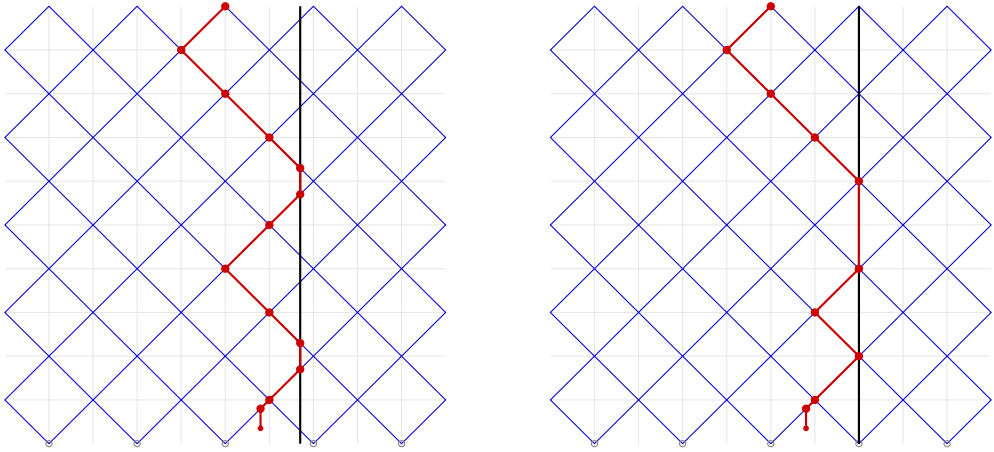

\begin{remark}\label{r:shift}
  It should be noted that the law of $(X_t^u)_{t\ge0}$, for $u\in\mathbb{R}^2$, is not invariant under shifts of $u$.
  Nevertheless, these laws satisfy the following invariance:
  \begin{equation}
    \label{e:invariance}
    (X^u_t)_{t \geq 0} \overset{d}\sim (X^{u + v}_{t})_{t \geq 0}, \text{ for any $v \in \mathbb{L}_{d}$}.
  \end{equation}
  Therefore, the law of all members of the collection $\{(X_t^u)_{t\ge0} \colon u\in\mathbb{R}^2\}$, is fully determined by the law of $\{(X_t^u)_{t\geq 0} \colon u \in \mathcal{L}_1\}$, where
  \begin{equation}\label{e:losange}
    \mathcal{L}_1:=\{u \in\mathbb{R}^2:\ ||u||_{1} \le 1\}.
  \end{equation}
\end{remark}

It is important to make a few comments on the probability space on which the above construction was made.
Denote the uniform distribution on $[0,1]$ by $U[0,1]$ and let $\mathbb{P} := \P \otimes \Pi_{u \in \mathbb{L}_{d}} \U[0,1]$.
This will be the annealed measure of the environment and family of random walks.
Notice that this probability space comprises all the randomness necessary to construct our process.

In analogy to \eqref{e:symmetry}, we now observe an important symmetry property of our random walk trajectory.
For $x_o \in \mathbb{Z}$, we define the reflection along the vertical line $\{x = x_o\}$.
More precisely, let us write $\Omega$ for a sample space supporting both the environment (denoted by $f$) and the random walk (denoted by $g$).
If we let $\bar{\sigma}_{x_o}: \Omega \to \Omega$ be defined by $\bar{\sigma}_{x_o}(f, g) = (f \circ \sigma_{x_o}, g \circ \sigma_{x_o})$, where $\sigma_{x_{o}}(x, y) = (2x_o - x, y)$, the random walk then inherits the symmetry from the environment
\begin{equation}
  \label{e:symmetry_decorated}
  \mathbb{P} \circ \bar{\sigma}_{x_o}^{-1} = \mathbb{P}, \text{ for every $x_o \in \mathbb{Z}$}.
\end{equation}
Observe that the reflections need to be centered around an integer point to guarantee the desired symmetry.

Another important property we need our random walks to satisfy is the monotonicity stated below.

\begin{proposition}[\cite{HKT19}, Proposition~3.1]
  \label{p:monotone}
  For every $\delta \in [0, 1/2]$, for every $u, u' \in \mathbb{R}^2$ with $\pi_1(u')\le \pi_1(u)$ and $\pi_2(u)= \pi_2(u')$, we have that, almost surely,
  \begin{equation}\label{e:monotone1}
    X^{u'}_{t}\le X^{u}_{t},  \text{ for all } t \ge 0.
  \end{equation}
\end{proposition}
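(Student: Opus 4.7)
The plan is to prove monotonicity in three stages, progressively enlarging the class of starting points from $\mathbb{L}_{d}$ to $\mathbb{L}$ to all of $\mathbb{R}^{2}$, with the heart of the argument being a parity-plus-coalescence observation on the discrete lattice.

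\emph{Stage 1 (discrete case).} First I would treat $u,u'\in\mathbb{L}_{d}$ with $\pi_{2}(u)=\pi_{2}(u')$ and $\pi_{1}(u')\le\pi_{1}(u)$, and prove by induction on $n\in\mathbb{N}$ that $X^{u'}_{n}\le X^{u}_{n}$. The definition of $\mathbb{L}_{d}$ in \eqref{e:def_space_time} implies that $X^{v}_{n}$ has the same parity as $\pi_{1}(v)$ shifted by $n$ for every $v\in\mathbb{L}_{d}$; hence at every integer time the positions $X^{u'}_{n}$ and $X^{u}_{n}$ share the same parity. Consequently either $X^{u'}_{n}=X^{u}_{n}$ or $X^{u}_{n}-X^{u'}_{n}\ge 2$. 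In the equality case, definition \eqref{e:def_X} makes both walks compute their $(n{+}1)$-th step using the same uniform $U_{(X^{u}_{n},\pi_{2}(u)+n)}$, so they coalesce and remain equal thereafter. In the strict case, each walk moves by $\pm 1$ at one step, so $X^{u}_{n+1}-X^{u'}_{n+1}\ge 2-2=0$. The linear interpolation \eqref{badcompany} then carries the inequality to non-integer times, since two affine functions ordered at both endpoints of an interval remain ordered throughout.

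\emph{Stage 2 (the lattice $\mathbb{L}$).} For $u\in\mathbb{L}\setminus\mathbb{L}_{d}$, definition \eqref{horriblecompany} prescribes deterministic motion along a single edge of $\mathbb{L}$ until hitting $\mathbb{L}_{d}$ at some time $t_{0}(u)<1$, after which evolution follows the rule for the entered $\mathbb{L}_{d}$-point. I would argue that during this deterministic pre-phase the ordering of $x$-coordinates is preserved: the $x$-coordinate of a walk on $\mathbb{L}$ is $1$-Lipschitz in $t$ by \eqref{e:lipschitz}, the edge direction is dictated by the parity $k(u)$, and a simple case check on the two possible edge orientations shows that a left-starting walk cannot overtake a right-starting one before both enter $\mathbb{L}_{d}$. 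Once both walks are on $\mathbb{L}_{d}$, possibly at different integer times, the conclusion reduces to Stage 1 together with the same Lipschitz comparison on any intervening interval during which only one of the two walks is still in its deterministic pre-phase.

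\emph{Stage 3 (general $u\in\mathbb{R}^{2}$).} For $u\in\mathbb{R}^{2}\setminus\mathbb{L}$, definition \eqref{awfulcompany} makes the walk move straight up with unchanged $\pi_{1}$-coordinate until the hitting time $s_{0}$ of $\mathbb{L}$. Because $\pi_{2}(u)=\pi_{2}(u')$, this vertical phase preserves the ordering of $\pi_{1}$, and from the first time each walk enters $\mathbb{L}$ one applies Stage 2.

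The main obstacle I anticipate is Stage 2: one must carefully handle the fact that $X^{u}$ and $X^{u'}$ may reach $\mathbb{L}_{d}$ at different times, and that between these two hitting times one walk is already performing stochastic $\pm 1$ jumps while the other is still traveling deterministically along an edge of $\mathbb{L}$. A clean way to organize this is via a continuity/no-crossing argument: by continuity of $t\mapsto X^{u}_{t}-X^{u'}_{t}$, any violation of monotonicity would require a first time $\tau$ at which $X^{u'}_{\tau}=X^{u}_{\tau}$, and one then checks in each of the finitely many geometric configurations (both on $\mathbb{L}_{d}$, one on $\mathbb{L}_{d}$ and one on an edge of $\mathbb{L}$, both on edges) that the walks must coalesce at $\tau$ rather than cross, using the parity argument of Stage 1 wherever both are simultaneously at $\mathbb{L}_{d}$.
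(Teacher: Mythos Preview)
The paper does not give its own proof of this proposition: it is stated with a citation to \cite{HKT19}, Proposition~3.1, and used as a black box. So there is no argument in the paper to compare against.

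Your three-stage plan is essentially the standard way to prove this coalescence/monotonicity statement, and Stage~1 (parity $+$ one-step induction on $\mathbb{L}_d$, then linear interpolation) is exactly right. One organizational point: the complication you worry about in Stage~2, namely that the two walks may enter $\mathbb{L}_d$ at different times, does not actually occur there. If $u,u'\in\mathbb{L}$ with $\pi_2(u)=\pi_2(u')$, then both reach $\mathbb{L}_d$ at the same time $t_0$ (the time for $\pi_2$ to reach the next integer), so Stage~2 reduces to a short geometric case-check on edge orientations followed immediately by Stage~1. The timing mismatch genuinely arises only in Stage~3, where $s_0(u)$ and $s_0(u')$ can differ. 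The clean way to handle it is the one you sketch at the end: for $s\in[s_0(u')\wedge s_0(u),\,s_0(u')\vee s_0(u))$ one trajectory is already on $\mathbb{L}$ while the other is the vertical segment $\{\pi_1(u)\}\times(\pi_2(u),\pi_2(u)+s_0(u))$, and by the very definition of $s_0$ this vertical segment contains no point of $\mathbb{L}$; hence the $\mathbb{L}$-trajectory, being continuous, cannot reach that $x$-coordinate, and no crossing occurs. Once both walks are on $\mathbb{L}$ at the same height you invoke Stage~2. With that adjustment your proof goes through.
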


Having the notation in place for the random walk starting at any point in $\mathbb{R}^2$, we can re-state our uniform ellipticity assumption in the following form:
\begin{equation}
  \label{e:unif_ellipticity}
  \inf_{f} \inf_{u \in \mathbb{R}^2} \mathbb{P}^{f} \big( X^u_3 > X^u_0 + 1 \big) \geq \uc{c:ellipticity}^3,
\end{equation}
which is a direct consequence of \eqref{e:ellipticity} in the continuous setting.

\begin{remark}
  \label{r:lln}
  Suppose that our environment satisfies the decoupling condition in Defition~\ref{def:decoupling} with $\varepsilon(w, h, r) =  c \big( wh + (w + h) r + r^2 \big) r^{-\alpha}$.
  This means that if $B_1$ and $B_2$ are square boxes with side length $5r$ and mutual distance at least $r$, then any pair of events $A_1$ and $A_2$ that only depend on the environment on these respective boxes satisfy
  \begin{equation}
    \label{e:cov_A1_A2}
    \Cov(A_1, A_2) \leq \varepsilon(5r, 5r, r) \leq c' r^{-\alpha+2}.
  \end{equation}
  Therefore, as soon as $\alpha > 10$ we can use Theorem~2.4 of \cite{10.1214/19-AOP1414} and the symmetry of the walk to conclude that $\lim_{t \to \infty} X_t/t = 0$, almost surely.
\end{remark}

\begin{remark}
  \label{r:recurrent}
  Let us now give a brief argument to show that $X_t$ is almost surely recurrent.
  For this, let $A = \{x \in \mathbb{Z}; X^{(x, 0)}_t \text{ is transient to the right}\}$.
  By~\eqref{e:monotone1}, we conclude that if $x < x'$ and $x \in A$, then $x' \in A$.
  Therefore, there are three possibilities: either $A = \mathbb{Z}$ or $A = \varnothing$ or that there exists $x_0 \in \mathbb{Z}$ such that $A = [x_0, \infty)$.

  Finally, observe that as soon as the error term $\epsilon(w, h, r)$ goes to zero as $r$ diverges (for every $w$ and $h$ fixed), the environment is mixing and therefore ergodic.
  Thus, since the events $[A = \mathbb{Z}]$, $[A = \varnothing]$, and $\cup_{x_0 \in \mathbb{Z}} \big[ A = [x_0, \infty) \big]$ are all tail events, their probability is either zero or one.

  The case $\mathbb{P} [A = \mathbb{Z}] = 1$ is a contradiction with the symmetry of the random walk, while $\mathbb{P} \big[ \cup_{x_0 \in \mathbb{Z}} \big[ A = [x_0, \infty) \big) \big] = 1$ would be a contradiction with the translation invariance of our process, since in this case such special point $x_0$ would be uniformly distributed on $\mathbb{Z}$.
  Therefore we conclude that $\mathbb{P}[A = \varnothing] = 1$, implying that almost surely $X^{(0, 0)}_t$ is not transient to the right (and by symmetry it is also not transient to the left). Thus $X^{(0, 0)}_t$ is almost surely recurrent as claimed.
\end{remark}

For an event $A$ and $u \in \mathbb{R}^2$ we now define $\theta_u \circ A$, the event $A$ shifted by $u$.
This is defined as follows.
Consider the shifted lattice $\theta_u \circ \mathbb{L} := u+ \mathbb{L}$.
Then we define the event $\theta_u \circ A$ such that
\begin{equation}
\Big( (f(x,\cdot))_{x\in \mathbb{Z}^2}, \prod_{v \in \mathbb{L}^d} U[0,1] \Big) \in \theta_u \circ A
\,\,\,\,\,\,\,\,\, \text{iff} \,\,\,\,\,\,\,\,
\Big( (f({x-u}, \cdot))_{x\in\mathbb{Z}^2}, \prod_{v \in -u+\mathbb{L}^d} U[0,1] \Big) \in A.
\end{equation}

\section{Crossing events}
\label{s:crossing}
~
\par This section is devoted to the introduction of a few crossing events that will play central roles in our arguments.

These crossings will take place inside rectangles of the type
\begin{equation}
\label{e:def_box}
B^u(w, h) := u + [0, w] \times [0, h],
\end{equation}
called the box of width $w$ and height $h$ anchored at $u\in \mathbb{R}^2$.
We write $L^u(w, h) := u + \{0\} \times [0, h]$ and $R^u(w, h) := u + \{w\} \times [0, h]$ for its left and right faces, respectively.
Analogously, we define $D^u(w, h)$ and $T^u(w, h)$ for the bottom and top faces, respectively.

Given $A, B \subseteq C \subseteq \mathbb{R}^2$, we define the event that $C$ is crossed from $A$ to $B$ as being
\begin{equation}
  \label{e:crossing}
  \Big[ A \underset{C}\rightarrow B \Big] := \Big[
  \begin{array}{c}
    \text{there exists $v \in A$ such that}\\
    \text{$X^v_t$ hits $B$ before exiting $C$}
  \end{array}
  \Big].
\end{equation}
The most typical situations will consist of $C$ being a box and $A$, $B$ subsets of its faces.

Given a box $B^u(w, h)$, we define the left-right crossing event
\begin{equation}
  \label{e:h_cross}
  H^u(w, h) = \Big[
    L^u(w, h) \underset{B^u(w, h)}\longrightarrow R^u(w, h)
  \Big].
\end{equation}
see Figure~\ref{f:cross} for an illustration.
Analogously, we define the right-left crossing event $\overleftarrow{H}^u(w, h)$ by just reversing the roles of $L^{u}(w,h)$ and $R^u(w,h)$ in \eqref{e:h_cross}.

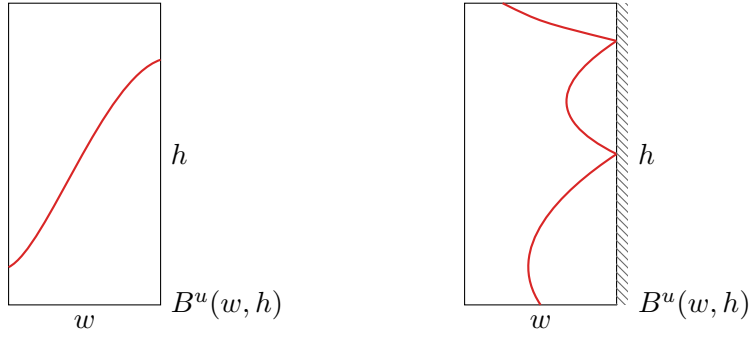
\begin{figure}[h]
  \centering
  \begin{tikzpicture}[scale=.5]
    \draw (0, 0) rectangle (4, 8);
    \draw[thick, color=red!70!gray] (0, 1) .. controls (1, 1.5) and (2.5, 6) .. (4, 6.5);
    \node[right] at (4,0) {$B^u(w, h)$};
    \node[right] at (4,4) {$h$};
    \node[below] at (2,0) {$w$};
    \draw (12, 0) rectangle (16, 8);
    \node[right] at (16.3,0) {$B^u(w, h)$};
    \node[right] at (16.3,4) {$h$};
    \node[below] at (14,0) {$w$};
    \draw[thick, color=red!70!gray] (14, 0) .. controls (13, 1.5) and (14.5, 3) .. (16, 4);
    \draw[thick, color=red!70!gray] (16, 4) .. controls (14, 5) and (14.5, 6) .. (16, 7);
    \draw[thick, color=red!70!gray] (16, 7) .. controls (14, 7.5) .. (13, 8);
    \fill[pattern=north west lines, pattern color=black!60] (16,0) rectangle ++(.3,8);
  \end{tikzpicture}
  \caption{On the left one sees a horizontal crossing $H^u$ from left to right in a box.
    The  picture on the right shows the event $V^u$ where one can see the reflection on the right wall.
  }
  \label{f:cross}
\end{figure}

By the symmetry under reflections (see \eqref{e:symmetry_decorated}) for every $u \in \mathbb{R}^2$, $h>0$ and $w$ such that $\pi_1(u) + w/2 \in \mathbb{Z}$, one has
\begin{equation}
\label{e:symmetry_crossings}
\mathbb{P} (H^u(w, h)) = \mathbb{P} (\overleftarrow{H}^u(w, h)).
\end{equation}
We also define the vertical crossing with reflection
\begin{equation}
  \label{e:v_cross}
  V^u(w, h) :=
  \Big[
    D^u(w, h) \underset{B^u(w, h)|_R}\longrightarrow T^u(w, h)
  \Big],
\end{equation}
where $B^u(w, h)|_R$ stands for the box $B^u(w, h)$ with reflection on its right boundary $u+\{w\} \times [0,h]$, see the paragraph below \eqref{e:lipschitz}. Observe that $V^u(w, h) = \Omega \setminus \overleftarrow{H}^u(w, h)$.

Using \eqref{e:ellipticity} one immediately sees that, as soon as $w \geq 4$,
\begin{equation}
  \label{e:narrow_corridor}
  \inf_{u \in \mathcal{L}_1} \mathbb{P} \big( V^u(w, h) \big) \geq \uc{c:ellipticity}^{h + 2} > 0,
\end{equation}
by simply forcing the random walk to zig-zag its way to the top using uniform ellipticity.

For any $u \in \mathbb{R}^2$ the events $H^u$ and $V^u$ enjoy the following monotonicity properties
\begin{equation}
  \label{e:monotonicity_H}
  \begin{split}
     \text{$\mathbb{P} \big(H^u(w, h)\big)$ is non-increasing in $w$ and non-decreasing in $h$ and}\\
     \text{$\mathbb{P} \big(V^u(w, h)\big)$ is non-decreasing in $w$ and non-increasing in $h$}.
   \end{split}
\end{equation}
  Moreover, if $[a_1, b_1] \subseteq [a_2, b_2]$ and $[c_2, d_2] \subseteq [c_1, d_1]$, then
\begin{equation}
  \label{e:cross_contained}
  \begin{split}
  \text{$H\big([a_2, b_2] \times [c_2, d_2]\big)$ is contained in $H\big([a_1, b_1] \times [c_1, d_1]\big)$;}\\
  \text{$V\big([a_1, b_1] \times [c_1, d_1]\big)$ is contained in $V\big([a_2, b_2] \times [c_2, d_2]\big)$,}
  \end{split}
\end{equation}
as it can be inferred by considering subsets of the original crossing.

As a consequence of \eqref{e:cross_contained} and \eqref{e:invariance}, we have for any $h, w \geq 5$,
\begin{align}
  \label{e:sup_inf_H}
  \sup_{u \in \mathbb{R}^2} \mathbb{P}(H^u(w, h)) & \leq \inf_{u \in \mathbb{R}^2} \mathbb{P}(H^u(w - 4, h + 4)), \\
  \label{e:sup_inf_V}
  \sup_{u \in \mathbb{R}^2} \mathbb{P}(V^u(w, h)) & \leq \inf_{u \in \mathbb{R}^2} \mathbb{P}(V^u(w + 4, h - 4)).
\end{align}
In fact, we can see that for any point $u  \in \mathcal{L}_1$ we have
\begin{equation}
\mathbb{P}\big(H^u(w, h)\big) \leq \mathbb{P}\big(H^0(w-2,h+2)\big) \leq \mathbb{P}\big(H^u(w-4, h+4)\big)
\end{equation}
from where \eqref{e:sup_inf_H} can be deduced.
The argument leading to \eqref{e:sup_inf_V} is similar.

It is intuitive that as we stretch a box vertically, the probability to find a horizontal crossing should increase, since we are given more and more attempts to find such a crossing (and an analogous statement in the other direction).
This intuition can be made precise with help of our decoupling inequality from Definition~\ref{def:decoupling} and it is the content of our next result.

\begin{lemma}
  \label{l:stretch}
  Fix an arbitrary $u \in \mathbb{R}^2$ and any triple $w, h, h_o > 1$ such that $w \leq h$ and $h + h_o$ is an even integer.
  Then
  \begin{equation}
    \label{e:vert_cros_iterated}
    \mathbb{P} \big( V^u \big( w, k(h + h_o) \big) \big)
    \leq \mathbb{P} \big( V^u \big( w, h \big) \big)^k + k \varepsilon(w, h, h_o),
  \end{equation}
  for all $k \geq 1$.
  Moreover, for $w_o > 1$ such that $w + w_o$ is an even integer,
  \begin{equation}
    \label{e:hori_cros_iterated}
    \mathbb{P} \big( H^u \big( k(w + w_o), h \big) \big)
	\leq \mathbb{P} \big( H^u \big( w, h \big) \big)^k + k \varepsilon(w, h, w_o),
  \end{equation}
  for every $k \geq 1$.
\end{lemma}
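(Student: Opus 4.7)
The plan is to establish \eqref{e:vert_cros_iterated} by a geometric decomposition combined with an iterative use of the decoupling condition of Definition~\ref{def:decoupling}; the bound \eqref{e:hori_cros_iterated} will then follow by an entirely analogous argument with the two coordinates interchanged. First partition $B^{u}(w,k(h+h_o))$ into $k$ horizontal sub-strips of height $h$ separated by vertical gaps of height $h_o$: set $u_i:=u+(0,(i-1)(h+h_o))$, $C_i:=B^{u_i}(w,h)$, and let $V_i$ denote the event of a vertical crossing of $C_i$ with right reflection. The first step will be to prove the deterministic inclusion
\begin{equation*}
V^{u}(w,k(h+h_o))\ \subseteq\ V_1\cap V_2\cap\cdots\cap V_k,
\end{equation*}
which follows from the coalescence property of the graphical construction: if a walker $X^{v}$ crosses the tall box from bottom to top while remaining inside the right-reflected big box, then at time $(i-1)(h+h_o)$ the walker occupies a point $v_i$ on the bottom face of $C_i$, and by \eqref{badcompany} the subsequent sub-trajectory $(X^{v_i}_s)_{s\le h}=(X^{v}_{(i-1)(h+h_o)+s})_{s\le h}$ crosses $C_i$ from bottom to top with right reflection inherited from the big box. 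Since the assumption $h+h_o\in 2\mathbb{Z}$ forces $u_i-u_1\in\mathbb{L}_d$, the shift-invariance \eqref{e:invariance} also yields $\mathbb{P}(V_i)=\mathbb{P}(V^{u}(w,h))=:p$ for every~$i$.

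Writing $\varepsilon:=\varepsilon(w,h,h_o)$ and $q_k:=\mathbb{P}(V_1\cap\cdots\cap V_k)$, the second step will be the recursion $q_k\le p\,q_{k-1}+\varepsilon$, which by induction starting from $q_1=p$ produces $q_k\le p^k+k\varepsilon$, and hence \eqref{e:vert_cros_iterated}. To this end I will apply Definition~\ref{def:decoupling} to the rectangle $C_1$ with decoupling distance $r=h_o$, obtaining a coupling between $f$ and a field $f^{C_1,r}$ such that $\mathbb{P}(f\ne f^{C_1,r})\le\varepsilon$ and $f^{C_1,r}|_{C_1}$ is independent of $f^{C_1,r}|_{C_2\cup\cdots\cup C_k}$, since the latter sits at $L^{1}$-distance at least $h_o$ from $C_1$. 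Writing $V_1^{*}$ for the event $V_1$ computed with $f^{C_1,r}$ in place of $f$ but the same uniform variables, on $\{f=f^{C_1,r}\}$ one has $V_1=V_1^{*}$ and no other $V_j$ is altered, so
\begin{equation*}
q_k\ \le\ \mathbb{P}(V_1^{*}\cap V_2\cap\cdots\cap V_k)+\varepsilon.
\end{equation*}
Under the natural construction of the decoupling, in which $f^{C_1,r}$ agrees with $f$ outside $C_1$ and has the same marginal law as $f$, the event $V_1^{*}$ depends only on $f^{C_1,r}|_{C_1}$ and on the $U$-variables inside $C_1$, and is therefore independent of $V_2,\ldots,V_k$, while $\mathbb{P}(V_1^{*})=p$. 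This gives $\mathbb{P}(V_1^{*}\cap V_2\cap\cdots\cap V_k)=p\,q_{k-1}$, delivering the desired recursion.

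The proof of \eqref{e:hori_cros_iterated} is entirely symmetric: partition $B^{u}(k(w+w_o),h)$ into $k$ vertical sub-columns of width $w$ separated by horizontal gaps of width $w_o$, and observe that, by continuity of $t\mapsto X^{v}_t$, any left-to-right crossing of the wide box must visit each intermediate vertical line $\{x=u_1+(i-1)(w+w_o)\}$; taking the last such visit before reaching the right face of the big box and extracting the resulting sub-walk via coalescence yields $H^{u}(k(w+w_o),h)\subseteq\bigcap_i H^{u_i}(w,h)$, after which the iterative decoupling of the previous paragraph applies verbatim with $r=w_o$. The delicate point in the whole argument is the coupling bookkeeping in the decoupling step: one must ensure that replacing $f$ by $f^{C_1,r}$ simultaneously decouples $V_1^{*}$ from the crossings in the remaining sub-strips \emph{and} leaves those crossings unchanged, so that the accumulated error remains linear in~$k$ rather than being amplified at every iteration.
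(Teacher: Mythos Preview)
Your argument follows the same route as the paper's: the deterministic inclusion into the intersection of $k$ sub-box crossings, the use of $h+h_o\in 2\mathbb{Z}$ to invoke \eqref{e:invariance}, and then peeling off one box at a time via Definition~\ref{def:decoupling} to obtain the recursion $q_k\le p\,q_{k-1}+\varepsilon$. The only cosmetic difference is that the paper peels off the \emph{last} box $D_{k-1}$ rather than your first box $C_1$, which is immaterial.

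One point deserves tightening. You write ``under the natural construction of the decoupling, in which $f^{C_1,r}$ agrees with $f$ outside $C_1$ and has the same marginal law as $f$''. Neither of these properties is part of Definition~\ref{def:decoupling}: the definition only gives you \eqref{e:likely_equal} and \eqref{e:finite_range}. In particular, you cannot directly assert $\mathbb{P}(V_1^{*})=p$, nor that $V_2,\dots,V_k$ are unaffected when passing from $f$ to $f^{C_1,r}$. The clean way to argue is to replace \emph{all} the $V_j$ by their $f^{C_1,r}$-versions $V_j^{*}$ at the cost of a single $\varepsilon$ (using \eqref{e:likely_equal}), then use \eqref{e:finite_range} to factor out $\mathbb{P}(V_1^{*})$, and finally bound $\mathbb{P}(V_1^{*})\le p+\varepsilon$ and $\mathbb{P}(\bigcap_{j\ge 2}V_j^{*})\le q_{k-1}+\varepsilon$ again via \eqref{e:likely_equal}. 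This yields $q_k\le p\,q_{k-1}+c\,\varepsilon$ for an absolute constant $c$, which is all that is needed (and the paper's own proof is comparably informal on this step). Your identification of this bookkeeping as ``the delicate point'' is exactly right; just make sure it rests only on what Definition~\ref{def:decoupling} actually provides.
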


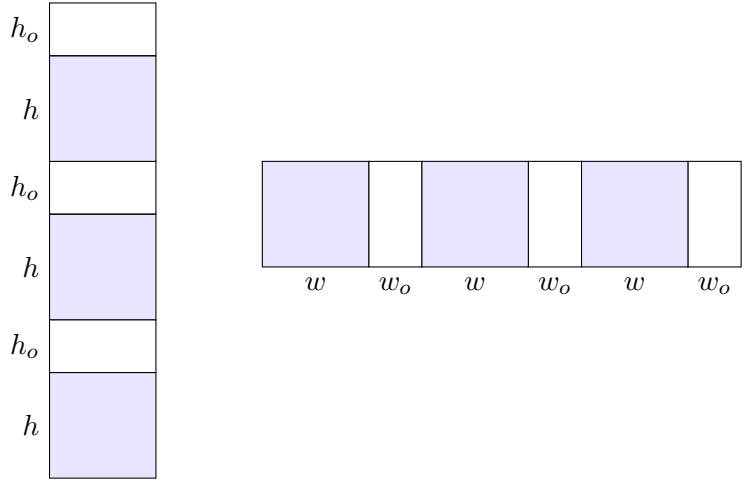
\begin{figure}[h]
  \centering
  \begin{tikzpicture}[scale=.7]
    \foreach \i in {0, 3, 6} {
      \draw[fill=blue!10!white] (0, \i) rectangle (2, \i + 2);
      \draw (0, \i + 2) rectangle (2, \i + 3);
      \node[left] at (0, \i + 1) {$h$};
      \node[left] at (0, \i + 2.5) {$h_o$};
      \draw (\i + 6, 4) rectangle (\i + 7, 6);
      \draw[fill=blue!10!white] (\i + 4, 4) rectangle (\i + 6, 6);
      \node[below] at (\i + 5, 4) {$w$};
      \node[below] at (\i + 6.5, 4) {$w_o$};
    }
  \end{tikzpicture}
  \caption{Splitting elongated boxes vertically and horizontally.}
  \label{f:splitting}
\end{figure}

\begin{proof}
  For the first inequality, define the rectangle $C = u + [0, w] \times [0, k (h + h_o)]$ and apply the decoupling in Definition~\ref{def:decoupling}.

  We start by looking at \eqref{e:vert_cros_iterated}.
  Let us place $k$ parallel horizontal strips of height $h$ vertically separated by distance $h_o$ inside box $B^u(w, k(h+h_o))$, see Figure~\ref{f:splitting}.
  Observe that any vertical crossing of $B^u(w, k(h + h_o))$ has to cross all the $k$ sub-boxes.
  More precisely,
  \begin{equation}
    V^u \big( w, k(h + h_o) \big) \subseteq \bigcap_{j = 0}^{k - 1} V^{u + (0, j(h + h_o))} \big( w, h \big).
  \end{equation}

  We now write $D_j$, with $j = 0, \dots, k - 1$ for these boxes and use Definition~\ref{def:decoupling} with $A = D_{k-1}$ and $B = \cup_{j = 0}^{k - 2} D_j$ to obtain
  \begin{equation}
    \mathbb{P} \big( V^u \big( w, k (h + h_o) \big) \big)
    \leq \mathbb{P}\Big(\bigcap_{j = 0}^{k - 2} V^{u + (0, j(h + h_o))} \big( w, h \big)\Big) \mathbb{P} \big( V^u \big( w, h \big) \big) + \varepsilon(w, h, h_o),
  \end{equation}
where we used that $ \mathbb{P} \big( V^{u+(0,h)}(\cdot,\cdot)\big)=\mathbb{P} \big( V^u(\cdot,\cdot)\big)$ for all $h\in2\mathbb{Z}$.
The proof of \eqref{e:vert_cros_iterated} is finished by iterating the argument $k-1$ times.

  The proof of \eqref{e:hori_cros_iterated} follows the same lines.
One just needs to split the relevant box into vertical strips appropriately.
The details are left for the reader.
\end{proof}

Observe that taking complements in \eqref{e:hori_cros_iterated} and \eqref{e:vert_cros_iterated} give us that
\begin{equation}
  \label{e:stretch_vertical}
  \mathbb{P} \big( V^u \big( k (w + w_o), h \big) \big) \geq 1 - \Big( 1 - \mathbb{P} \big( V^u(w, h) \big) \Big)^k - k \varepsilon(w, h, w_o)
\end{equation}
and
\begin{equation}
  \label{e:stretch_horizontal}
  \mathbb{P} \big( H^u \big( w, k (h + h_o) \big) \big) \geq 1 - \Big( 1 - \mathbb{P} \big( H^u(w, h) \big) \Big)^k - k \varepsilon(w, h, h_o),
\end{equation}
which will be useful later.

We state now some simple consequences of the above bounds on the asymptotic crossing probabilities as the boxes become very long or wide.

\begin{corollary}
  \label{c:trivial_limits}
   For fixed $h > 0$ we have
  \begin{equation}
    \label{e:w_large}
    \lim_{w \to \infty} \inf_{u \in \mathbb{R}^2} \mathbb{P} \big( V^u(w, h) \big) = 1.
  \end{equation}
Moreover, for any $w > 0$ fixed,
  \begin{equation}
    \label{e:h_large}
    \lim_{h \to \infty} \sup_{u \in \mathbb{R}^2} \mathbb{P} \big( V^u(w, h) \big) = 0 \qquad \text{and} \qquad
    \lim_{h \to \infty} \inf_{u \in \mathbb{R}^2} \mathbb{P} \big( H^u(w, h) \big) = 1.
  \end{equation}
\end{corollary}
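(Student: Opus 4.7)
The plan is to derive all three limits from a common bootstrap: seed each statement with a uniform-in-$u$ positive probability lower bound coming from ellipticity, then iterate it using Lemma~\ref{l:stretch}.

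For \eqref{e:w_large}, I would take \eqref{e:narrow_corridor} as the seed. With $w_0 = 4$, one has $p := \inf_{u\in\mathcal{L}_1}\mathbb{P}(V^u(4,h))\geq \uc{c:ellipticity}^{h+2}>0$, and since $\mathcal{L}_1$ serves as a fundamental domain of $\mathbb{L}_d$, the translation invariance \eqref{e:invariance} extends this to $\inf_{u\in\mathbb{R}^2}\mathbb{P}(V^u(4,h))\geq p$. Feeding this into \eqref{e:stretch_vertical} with base width $4$ gives
\begin{equation*}
\mathbb{P}\big(V^u(k(4+w_o), h)\big) \geq 1 - (1-p)^k - k\varepsilon(4, h, w_o).
\end{equation*}
By \eqref{e:polynomial}, $\varepsilon(4,h,w_o)$ decays polynomially in $w_o$ for $h$ fixed, so I can choose $w_o = w_o(k)\to\infty$ with $k\varepsilon(4,h,w_o(k))\to 0$. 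The right-hand side then tends to $1$ along widths $W_k := k(4+w_o(k))\to\infty$, and monotonicity in $w$ from \eqref{e:monotonicity_H} upgrades this to the full limit in $w$.

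The two statements in \eqref{e:h_large} follow the same template with the roles of $H$ and $V$ exchanged. For the $H$-limit, the uniform ellipticity \eqref{e:unif_ellipticity} yields, for $h_0$ large enough compared to $w$, a bound $\mathbb{P}(H^u(w,h_0)) \geq q > 0$ uniformly in $u\in\mathbb{R}^2$ (by forcing the walker from the bottom-left corner of $B^u(w,h_0)$ to make enough rightward moves before exiting); plugging this into \eqref{e:stretch_horizontal} and choosing $h_o(k)\to\infty$ with $k\varepsilon(w,h_0,h_o(k))\to 0$ yields the limit along a subsequence of heights, and monotonicity in $h$ delivers the full limit. For the $V$-limit, I would exploit the identity $V^u(w,h)=\Omega\setminus\overleftarrow{H}^u(w,h)$ together with the mirror ellipticity estimate $\mathbb{P}(\overleftarrow{H}^u(w,h_0))\geq \eta>0$ (same argument, with the walker zig-zagging leftward from the bottom-right corner), which gives $\sup_u\mathbb{P}(V^u(w,h_0))\leq 1-\eta$; substituting into \eqref{e:vert_cros_iterated} produces
\begin{equation*}
\mathbb{P}\big(V^u(w, k(h_0+h_o))\big)\leq (1-\eta)^k + k\varepsilon(w,h_0,h_o),
\end{equation*}
which tends to $0$ under the same coordinated choice of $h_o(k)$.

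The only technicality is coordinating $k$ with the decoupling scale so that $k\varepsilon\to 0$. Under \eqref{e:polynomial}, $\varepsilon(w,h,r) = O(r^{2-\uc{c:alpha}})$ for fixed $w,h$, so any decoupling scale growing like $k^A$ with $A>1/(\uc{c:alpha}-2)$ works, assuming $\uc{c:alpha}>2$. No substantive obstacle arises; the corollary is essentially a clean application of ellipticity and the stretching lemma already in hand.
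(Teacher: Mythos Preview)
Your proposal is correct and follows essentially the same route as the paper: seed each limit with a uniform ellipticity bound (\eqref{e:narrow_corridor} for \eqref{e:w_large}, and the analogous zig-zag estimate for the two limits in \eqref{e:h_large}), iterate via Lemma~\ref{l:stretch} and its consequences \eqref{e:stretch_vertical}, \eqref{e:stretch_horizontal}, \eqref{e:vert_cros_iterated}, and choose the decoupling scale to grow fast enough that the error term $k\varepsilon$ vanishes. The paper's proof is a two-line sketch that cites \eqref{e:stretch_vertical} and \eqref{e:narrow_corridor} and defers the remaining cases to ``similar arguments''; you have simply written out those similar arguments, together with the monotonicity step needed to pass from the subsequence $W_k$ to the full limit.
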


\begin{proof}
  Recall that $\lim_{r \to \infty} \varepsilon(w, h, r) = 0$ for any fixed $w, h$.
  The limit in \eqref{e:w_large} follows from \eqref{e:stretch_vertical} by fixing $w = 4$ as in \eqref{e:narrow_corridor} and choosing $w_0(k)$ such that $k\varepsilon(4,h,w_0(k))\to0$ as $k$ tends to infinity.
  The limits in \eqref{e:h_large} follow from similar arguments.
\end{proof}

Observe also that
\begin{display}
  \label{e:light_cone}
  if $w > h$, then $\mathbb{P}(H^u(w, h)) = 0$, for any $u \in \mathbb{R}^2$,
\end{display}
by the fact that the walk is $1$-Lipschitz.

\section{Russo-Seymour-Welsh technique}
\label{s:rsw}
~
\par In this section we will prove a result that is inspired by the RSW Theorem for Bernoulli percolation, see \cite{Russo} and \cite{SeymourWelsh}.
In our specific setting, the process is not symmetric with respect to right angle rotations of the plane, therefore the type of statements that we are after will not be stated in terms of square boxes.
Instead, we take inspiration from a similar RSW-type result for oriented percolation, see \cite{D-CTT18}.

In Subsection~\ref{ss:w_h} we introduce an implicit parameter $w(h)$ that will capture the correct scale of fluctuations for a random walk that runs for time $h$.
Intuitively speaking, $w(h)$ will be such that a random walk has probability $1/2$ of fluctuating more than $w(h)$ and probability $1/2$ of fluctuating less during a time interval of length $h$.

These fluctuation probabilities will be stated in terms of the crossing events defined in Section~\ref{s:crossing} and it is the main objective of this section to show that $w(h)$ is lower bounded by a polynomial in $h$.
This is achieved through a RWS-type argument where we show that crossing longer and wider boxes is also likely to happen, allowing us to connect what happens at different scales and bound $w(h)$ from below.

Subsections~\ref{ss:vertical} and \ref{ss:horizontal} are respectively devoted to proving a vertical and a horizontal version of RSW statements, see Propositions~\ref{l:v_rsw} and \ref{p:h_rsw}.

\subsection{The fluctuation scale}
\label{ss:w_h}
~
\par We now introduce the function $w(h)$ that intuitively speaking measures the order of fluctuations that the random walk undergoes after time $h$.
In order to use RSW and percolation techniques, we will define this quantity in terms of crossing events and we will do so by tuning $w(h)$ in a way that horizontal and vertical crossings are equally likely.

Due to the fact that our process is not translation invariant in the full $\mathbb{R}^2$ (but rather only on the lattice $\mathbb{L}_{d}$), we will define this implicit constant in terms of an integral along a rhombus instead.

\begin{definition}
  \label{d:w_h}
  Given $h \geq 2$, we recall the definition of $\mathcal{L}_1$ in \eqref{e:losange} and introduce
  \begin{equation}
    \label{e:w_h}
    w(h) = \sup \Big\{ w \geq 1; \frac{1}{\Vol(\mathcal{L}_1)} \int_{u \in \mathcal{L}_1} \mathbb{P} \big( H^u (w, h) \big) \d u \geq \frac{1}{\Vol(\mathcal{L}_1)} \int_{u \in \mathcal{L}_1} \mathbb{P} \big( V^u (w, h) \big) \d u \Big\},
  \end{equation}
  where the integration is with respect to the Lebesgue measure on the plane.
\end{definition}

\begin{remark}
  \label{r:integral_set}
 By Corollary~\ref{c:trivial_limits} the supremum in \eqref{e:w_h} runs over a set that is bounded above.
 Moreover, note that $\inf_u H^u(1/2, 3) > 0$, while $V^u(1/2, 3) = 0$ for every $u \in \mathbb{R}^2$, so that set is non-empty as soon as $h \geq 2$.
\end{remark}

\begin{remark}\label{r:monotonicity_w}
 The function $h \mapsto w(h)$ is monotone increasing.
 In fact, by \eqref{e:monotonicity_H}, if $h < h'$ then $\mathbb{P}(H^u(w,h)) \leq \mathbb{P}(H^u(w,h'))$ and $\mathbb{P}(V^u(w,h)) \geq \mathbb{P}(V^u(w,h'))$ for every $u \in \mathbb{R}^2$ and $w \geq 1$.
 Therefore,
 \begin{equation}
 \begin{split}
 \Big\{ w \geq 1; \int_{u \in \mathcal{L}_1} & \mathbb{P} \big( H^u (w, h) \big) \d u \geq  \int_{u \in \mathcal{L}_1} \mathbb{P} \big( V^u (w, h) \big) \d u \Big\} \\
  & \subseteq \Big\{ w \geq 1; \int_{u \in \mathcal{L}_1} \mathbb{P} \big( H^u (w, h') \big) \d u \geq  \int_{u \in \mathcal{L}_1} \mathbb{P} \big( V^u (w, h') \big) \d u \Big\}.
 \end{split}
 \end{equation}
 Taking the supremum on both sides yields $w(h') \geq w(h)$.
\end{remark}

We now observe that the integrals appearing in \eqref{e:w_h} are continuous.
More precisely, for $h \geq 2$, the functions
\begin{equation}
  \label{e:crossing_functions}
  d_1: w \mapsto \frac{1}{\Vol(\mathcal{L}_1)}\int_{u \in \mathcal{L}_1} \mathbb{P} \big( H^u(w, h) \big) \d u
  \quad \text{and} \quad
  d_2: w \mapsto \frac{1}{\Vol(\mathcal{L}_1)}\int_{u \in \mathcal{L}_1} \mathbb{P} \big( V^u(w, h) \big) \d u\\
\end{equation}
are continuous in $w$.
To see why this is true, fix $w^* > 0$ and note that
\begin{equation*}
  \Big\{ u \in \mathcal{L}_1; \mathbb{P} \big( H^u(w, h) \big) \text{ is discontinuous at $w = w^*$} \Big\}
  \subseteq \Big\{ u \in \mathcal{L}_1; \pi_1(u) + w^* \in \mathbb{Z} \Big\},
\end{equation*}
which has zero Lebesgue measure, proving the statement for $d_1$.
The argument for $d_2$ is based on the fact that the two functions in \eqref{e:crossing_functions} are linear combinations of one another, or more precisely $d_1 = 1 - d_2$, due to the fact that $V^u(w, h) = \Omega \setminus \overleftarrow{H}^u(w, h)$ and~\eqref{e:symmetry_crossings}.
This, together with their continuity, implies that for every $h \geq 2$,
\begin{equation}
  \label{e:cross_wh_is_half}
  \frac{1}{\Vol(\mathcal{L}_1)} \int_{u \in \mathcal{L}_1} \mathbb{P} \big( H^u (w(h), h) \big) \d u = \frac{1}{\Vol(\mathcal{L}_1)} \int_{u \in \mathcal{L}_1} \mathbb{P} \big( V^u (w(h), h) \big) \d u = \frac{1}{2}.
\end{equation}

Let us now collect some useful facts about the function $w(h)$.
First, it is important to observe that
\begin{display}
  \label{e:w_h_diverges}
  $w(h)$ goes to infinity as $h$ diverges.
\end{display}
In fact, observe that for any fixed $w^* \geq 1$, if $h \geq k (w^* + 4)$ we can use uniform ellipticity to obtain the following bound
\begin{equation}
  \inf_{u \in \mathbb{R}^2} \mathbb{P}(H^u(w^*, h)) \geq 1 - \big( 1 - \uc{c:ellipticity}^{w^* + 4} \big)^k,
\end{equation}
which is strictly larger than $1/2$ for $k_0 = k_0(\uc{c:ellipticity})$ large enough.
Therefore we know that $w(h) \geq w^{*}$ for $h \geq k_0(\uc{c:ellipticity}) (w^* + 4)$, yielding \eqref{e:w_h_diverges}.

\nc{c:large_h}
Next, we would like to be able replace the integral over $\mathcal{L}_1$ with a worst case lower bound.
Since $w(h)$ diverges as $h$ grows, we can assume that $\uc{c:large_h} = \uc{c:large_h}(\uc{c:ellipticity})>0$ is large enough as to allow the use of~\eqref{e:sup_inf_H} in order to obtain
\begin{equation}
  \label{e:horizontal_lower}
  \begin{split}
    \inf_{u \in \mathbb{R}^2} \mathbb{P} \big( H^u(w(h) - 4, h + 4) \big) & \geq \sup_{u \in \mathbb{R}^2} \mathbb{P} \big( H^u(w(h), h) \big) \\
    & \geq
 \frac{1}{\Vol(\mathcal{L}_1)} \int_{u \in \mathcal{L}_1} \mathbb{P} \big( H^u (w(h), h) \big) \d u = \frac{1}{2}
\end{split}
\end{equation}
and
\begin{equation}
  \label{e:vertical_lower}
  \begin{split}
    \inf_{u \in \mathbb{R}^2} \mathbb{P} \big( V^u(w(h) + 4, h - 4) \big) & \geq \sup_{u \in \mathbb{R}^2} \mathbb{P} \big( V^u(w(h), h) \big) \\
    & \geq
 \frac{1}{\Vol(\mathcal{L}_1)} \int_{u \in \mathcal{L}_1} \mathbb{P} \big( V^u (w(h), h) \big) \d u = \frac{1}{2},
\end{split}
\end{equation}
for all $h \geq \uc{c:large_h}(\uc{c:ellipticity})$.

\subsection{Vertical RSW}
\label{ss:vertical}
~
\par As we have observed in the previous section, a rectangle of height $h$ and width $w(h)$ has a good chance of being crossed both horizontally and vertically.
However, this results is not useful if we do not have some wiggle room to perform geometrical constructions.

This issue is intimately related to the fact that, for Bernoulli percolation, crossing squares with positive probability is not nearly as useful as crossing rectangles in the hard direction.

The next proposition states that crossing much higher boxes (roughly with dimensions $w(h) \times 5h$) has also a uniformly positive probability.

\nc{c:vertical_rsw}
\begin{proposition}[Vertical RSW]
  \label{l:v_rsw}
Let $w(h)$ be defined as in \eqref{e:w_h}.
For every $h \geq 64 \vee \uc{c:large_h}(\uc{c:ellipticity})$, we have
  \begin{equation}
    \label{e:v_rsw}
    \inf_{u \in \mathbb{R}^2} \mathbb{P} \big( V^u(w(h) + 4, 5 h) \big) \geq \uc{c:vertical_rsw},
  \end{equation}
  where $\uc{c:vertical_rsw} := \uc{c:ellipticity}^{360}/{16^{30}}$.
\end{proposition}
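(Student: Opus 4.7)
The plan is a Russo--Seymour--Welsh style gluing argument that lifts the scale-$h$ bounds to a vertical crossing of height $5h$. The starting point is that, by \eqref{e:horizontal_lower}--\eqref{e:vertical_lower}, for every $u \in \mathbb{R}^2$ we have $\mathbb{P}(V^u(w(h)+4, h-4)) \geq 1/2$ and $\mathbb{P}(H^u(w(h)-4, h+4)) \geq 1/2$. I will moreover use that both $V^u$ and $H^u$ are increasing events in the coordinatewise order on the pair $(f, (U_v)_{v \in \mathbb{L}_d})$: raising a field value or raising a uniform variable can only help the walker step right, which helps both the event ``reach the right wall'' and the event ``do not exit through the left wall.'' Since $f$ satisfies \eqref{e:fkg} and the uniforms are i.i.d., FKG applies to the joint product measure, and intersections of such events can be bounded below by products of marginal probabilities.

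I would then partition the box $u + [0, w(h)+4] \times [0, 5h]$ into five horizontal strips of height $h$ and aim to show that the walker started at the bottom-right corner, subject to right reflection along $\{x = u_1 + w(h)+4\}$, stays in $[u_1, u_1+w(h)+4]$ throughout time $5h$. The naive strategy of intersecting the five events $V^{u_i}(w(h)+4, h-4)$ via FKG gives $2^{-5}$, but this does \emph{not} imply $V^u(w(h)+4, 5h)$: after traversing strip $i$, the walker typically sits at some interior point, strictly to the left of the right wall of strip $i+1$, whereas $V^{u+(0, ih)}$ only controls a walker started from the right wall at that height. Monotonicity \eqref{e:monotone1} sharpens this obstruction, since walkers started further to the left are \emph{more} likely to exit through the left wall, so the stacking loses information in the wrong direction.

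The fix is to insert between consecutive strips (i)~a horizontal-crossing event $H^{u_i'}(w(h)-4, h+4)$ in a well-chosen overlap box, and (ii)~a bounded number of forced ellipticity moves using \eqref{e:unif_ellipticity_0}. The forced moves deterministically push the walker back toward the right wall at each strip interface; once the walker is verifiably near the right wall, the next $V$-event controls its trajectory across the subsequent strip, and the chain closes. FKG then combines the $V$- and $H$-events, and with roughly $24$ events per interface summed over the five strips one obtains a factor of $2^{-120} = 1/16^{30}$; the approximately $360$ forced ellipticity moves in total contribute $\uc{c:ellipticity}^{360}$. The condition $h \geq 64 \vee \uc{c:large_h}(\uc{c:ellipticity})$ ensures both that the baseline bounds \eqref{e:horizontal_lower}--\eqref{e:vertical_lower} apply and that each strip has enough height to accommodate the construction. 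The uniformity in $u \in \mathbb{R}^2$ follows from \eqref{e:invariance} combined with the containment inequalities \eqref{e:sup_inf_H}--\eqref{e:sup_inf_V}.

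The hardest step will be the gluing at the strip interfaces: monotonicity acts against stacking, and the ellipticity steps and auxiliary horizontal crossings must be arranged so that, after each interface, the walker is verifiably close enough to the right wall for the next $V$-event to do its work. Specifying the exact geometry of the overlap boxes and forced moves, and then checking that the intersection of all these events genuinely forces $V^u(w(h)+4, 5h)$, is the technical core of the proof; the remaining steps (invoking FKG, transferring the uniformity in $u$) are routine.
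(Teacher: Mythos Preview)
Your overall strategy---FKG-gluing of $V$-events with auxiliary $H$-events---is the right one, but the mechanism you describe has a real gap. The only horizontal crossing you have a uniform lower bound for is $H^u(w(h)-4, h+4)$, a box of height $h+4$; this is \emph{taller} than a strip of height $h$, so it cannot sit inside an overlap region between consecutive $V$-boxes. For the gluing to work you need horizontal crossings of \emph{strictly shorter} boxes (height $\sim 3h/4$), but the definition of $w(h)$ gives no lower bound on those. Your fallback of using ellipticity moves to ``push the walker back toward the right wall'' also fails: at a strip interface the walker may sit anywhere in $[0, w(h)+4]$, and since $w(h)\to\infty$ no $O(1)$ ellipticity steps can close that gap.

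The paper resolves this via a dichotomy on $\inf_u \mathbb{P}(H^u(w(h)-8, 3h/4))$. If this is $\geq 1/8$ (Case~1), short horizontal crossings are available and, after a small ellipticity extension, fit in the overlaps of thirty boxes $V^{u+(0,ih/5)}(w(h)+4, h-4)$, $i=0,\dots,29$, shifted by only $h/5$ each; the gluing then works purely by path coalescence and monotonicity, with no need to return to the right wall. If it is $<1/8$ (Case~2), one subtracts the two short-crossing events from the full $H^u(w(h)-4, h+4)$ to obtain, with probability $\geq 1/4$, an ``S-shaped'' crossing that enters in the bottom quarter of the left side and exits (with right reflection) in the top quarter of the right side; these S-crossings interlock directly when stacked at vertical shifts $h/4$. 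The case split is the missing idea; without it the overlap gluing cannot be made to close.
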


\begin{proof}
  We split the proof in two cases, depending on the value of $\mathbb{P} (H^0(w(h) - 8, 3h/4))$.

  {\bf Case 1 - }
  We first consider the possibility that
  \begin{equation}
    \label{e:v_rsw_case1}
    \inf_{u \in \mathbb{R}^2} \mathbb{P}\big( H^u(w(h) - 8, 3h/4) \big) \geq 1/8.
  \end{equation}
  In this case, we first observe that, by uniform ellipticity, we can extend slightly the horizontal crossing.
  More precisely
  \vspace{-4mm}
  \begin{equation}
    \label{e:v_rsw_case1_ellip}
    \inf_{u \in \mathbb{R}^2} \mathbb{P} \big( H^u (w(h) + 4, 3h/4 + 12) \big)
    \geq \mathbb{P} \Bigg( \extend \Bigg)
    \geq \frac{\uc{c:ellipticity}^{12}}{8}.
  \end{equation}
  We can then use this horizontal crossing to glue together two vertical crossings using~\eqref{e:fkg} as described below.
  Fix $u \in \mathbb{R}^2$ and observe that since $h \geq 64$, we have $h-4 \geq 3h/4 +12$ therefore,
  \vspace{-2mm}
  \begin{equation*}
    V^u \big( w(h) + 4, 5h \big) \supseteq \bigcap_{i = 0}^{29} V^{u + (0, i h/5)} \big( w(h) + 4, h - 4 \big) \cap H^{u + (0, i h/5)} \big( w(h) + 4, 3h/4 + 12 \big),
  \end{equation*}
  as depicted in Figure~\ref{f:glue_vertical}.

  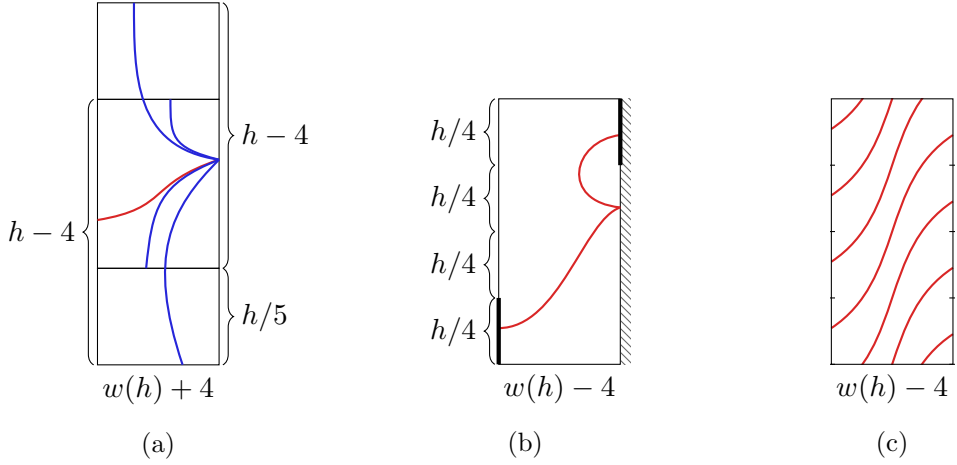
\begin{figure}[h]
    \begin{subfigure}{.3\textwidth}
      \centering
      \begin{tikzpicture}[scale=.16]
        \draw (0, 0) rectangle ++(10, 8);
        \draw (0, 8) rectangle ++(10, 14);
        \draw (0, 22) rectangle ++(10, 8);
        \draw[thick, color=red!70!gray] (0, 12) .. controls (6, 13) and (4, 15) .. (10, 17);
        \draw[thick, color=blue!70!gray] (7, 0) .. controls (4, 9) and (6, 13) .. (10, 17);
        \draw[thick, color=blue!70!gray] (10, 17) .. controls (6, 18) and (6, 19) .. (6, 22);
        \draw[thick, color=blue!70!gray] (4, 8) .. controls (4.5, 13) and (5, 15) .. (10, 17);
        \draw[thick, color=blue!70!gray] (10, 17) .. controls (3, 18) and (3, 24) .. (3, 30);
        \node[below] at (5, 0) {$w(h) + 4$};
        \draw [decorate,decoration={brace,amplitude=4,raise=.3ex},yshift=0pt] (10, 8) -- (10, 0);
        \node[right] at (11, 4) {$h/5$};
        \draw [decorate,decoration={brace,amplitude=4,raise=.3ex},yshift=0pt] (0, 0) -- (0, 22);
        \node[left] at (-1, 11) {$h - 4$};
        \draw [decorate,decoration={brace,amplitude=4,raise=.3ex},yshift=0pt] (10, 30) -- (10, 8);
        \node[right] at (11, 19) {$h - 4$};
      \end{tikzpicture}
      \caption{}
    \end{subfigure}
    \begin{subfigure}{.3\textwidth}
      \centering
      \begin{tikzpicture}[scale=.16]
      \fill[pattern=north west lines, pattern color=black!60] (10,0) rectangle ++(0.8, 22);
        \fill[color=white] (0, 0) rectangle ++(10, 30);
        \draw (0, 0) rectangle ++(10, 22);
        \node[below] at (5, 0) {$w(h) - 4$};
        \draw[thick, color=red!70!gray] (0, 3) .. controls (5, 3) and (7, 12) .. (10, 13);
        \draw[thick, color=red!70!gray] (10, 13) .. controls (5, 13.5) and (6, 18.5) .. (10, 19);
        \draw[ultra thick] (0, 0) -- (0, 5.5);
        \draw[ultra thick] (10, 16.5) -- (10, 22);
        \foreach \i in {0, 5.5, 11, 16.5} {
          \draw [decorate,decoration={brace,amplitude=4,raise=.3ex},yshift=0pt] (0, \i) -- (0, \i + 5.5);
          \node[left] at (-1, \i + 2.7) {$h/4$};
        }
      \end{tikzpicture}
      \caption{}
    \end{subfigure}
    \begin{subfigure}{.3\textwidth}
      \centering
      \begin{tikzpicture}[scale=.16]
        \fill[color=white] (0, 0) rectangle ++(10, 30);
        \node[below] at (5, 0) {$w(h) - 4$};
        \begin{scope}
          \clip (-.1, -.03) rectangle (10.1, 22);
          \foreach \i in {-16.5, -11, -5.5, 0, 5.5, 11, 16.5} {
            \draw (-.3, \i) -- (.3, \i) (10 - .3, \i) -- (10.3, \i);
            \draw[thick, color=red!70!gray] (0, \i + 3) .. controls (6, \i + 7) and (4, \i + 15) .. (10, \i + 19);
          }
          \draw (0, 0) rectangle (10, 22);
        \end{scope}
      \end{tikzpicture}
      \caption{}
    \end{subfigure}
    \caption{Vertical crossings are depicted in blue, while horizontal ones are in red.
    (a) Vertical crossings in overlapping boxes can be joined together in the presence of a short horizontal crossing inside the overlap region, as used in the first case of the proof of Proposition~\ref{l:v_rsw}.
      (b) The crossing from \eqref{e:s-shaped}. (c) The crossing strategy leading to \eqref{eq:idontknow}.
      }
    \label{f:glue_vertical}
    \end{figure}
The above inclusion implies by~\eqref{e:fkg} that
  \begin{equation*}
    \begin{array}{e}
      \mathbb{P} \big( V^u (w(h) + 4, 5h) \big) & \geq & \Big( \inf_{u' \in \mathbb{R}^2} \mathbb{P} \big( V^{u'}(w(h) + 4, h - 4) \big) \Big)^{30}\\
      & & \times \Big( \inf_{u' \in \mathbb{R}^2} \mathbb{P} \big( H^{u'}(w(h) + 4, 3h/4 + 12) \big) \Big)^{30} \;\; \overset{\eqref{e:vertical_lower}, \eqref{e:v_rsw_case1_ellip}}\geq \;\; \frac{\uc{c:ellipticity}^{360}}{16^{30}} = \uc{c:vertical_rsw},
  \end{array}
  \end{equation*}
  yielding \eqref{e:v_rsw}.

  {\bf Case 2 - } We now consider the possibility that
  \begin{equation}
    \inf_{u \in \mathbb{R}^2} \mathbb{P}\big( H^u(w(h) - 8, 3h/4) \big) < 1/8.
  \end{equation}
  In this case, we can slightly widen the box and use \eqref{e:cross_contained} to obtain
  \begin{equation}
    \sup_{u \in \mathbb{R}^2} \mathbb{P} \big( H^u(w(h) - 4, 3h/4 - 4) \big) < 1/8.
  \end{equation}

  The bound above, together with \eqref{e:horizontal_lower} yields
  \begin{equation}
    \label{e:s-shaped}
    \inf_{u \in \mathbb{R}^2} \mathbb{P} \Big( H^u (w(h) - 4, h + 4) \setminus
    \big(
    H^u (w(h) - 4, 3h/4) \cup H^{u + (0, h/4)} (w(h) - 4, 3h/4)
    \big)
    \Big) \geq 1/4.
  \end{equation}
  Notice that the event in the probability above is contained in the increasing event visualized in Figure~\ref{f:glue_vertical} (b), that can be written more concisely as
  \begin{equation}
    \label{e:bound_s}
    \inf_{u \in \mathbb{R}^2} \mathbb{P} \Big(
    \{0\} \times [0, h/4] \underset{B^u(w, h)|_R}\longrightarrow \{w(h) - 4\} \times [3h/4, h]
    \Big) \geq 1/4.
  \end{equation}

  We now observe the following inclusion, depicted in Figure~\ref{f:glue_vertical} (c)
  \begin{equation*}
    V^u \big( w(h) + 4, 5h \big) \supseteq \bigcap_{i = -1}^{28} \Big[
    \{0\} \times \big[ \tfrac{ih}{4}, \tfrac{(i + 1)h}{4} \big] \underset{B^u(w, h)|_R}\longrightarrow \{w(h) - 4\} \times \big[ \tfrac{(i + 3)h}{4}, \tfrac{(i + 4)h}{4} \big]
    \Big],
  \end{equation*}
  which together with \eqref{e:bound_s} and~\eqref{e:fkg} gives
  \begin{equation}\label{eq:idontknow}
    \inf_{u \in \mathbb{R}^2} \mathbb{P} \big( V^u \big( w(h) + 4, 5h \big) \big) \geq \Big( \frac{1}{4} \Big)^{30} \geq \uc{c:vertical_rsw},
  \end{equation}
  as desired.
\end{proof}

Similarly to other RSW-type results, we can stretch the box even further if necessary.

\begin{corollary}
  \label{c:stretch_vertically}
  For every $h \geq 64 \vee \uc{c:large_h}(\uc{c:ellipticity})$
  \begin{equation}
    \label{e:stretch_vertically}
    \inf_{u \in \mathbb{R}^2} \mathbb{P} \big( V^u(w(h) + 4, j h) \big) \geq \Big( \frac{\uc{c:vertical_rsw}\uc{c:ellipticity}^{8}}{2}\Big)^j,
  \end{equation}
  for every $j \geq 1$.
\end{corollary}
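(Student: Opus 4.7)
The plan is to argue by induction on $j \geq 1$, using Proposition~\ref{l:v_rsw} both for the base case and as the multiplicative engine of the inductive step.

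For the base case $1 \leq j \leq 5$, the non-increasing dependence of $V^u(w,\cdot)$ on the height from \eqref{e:monotonicity_H} gives $V^u(w(h)+4, jh) \supseteq V^u(w(h)+4, 5h)$, hence Proposition~\ref{l:v_rsw} yields $\mathbb{P}(V^u(w(h)+4, jh)) \geq \uc{c:vertical_rsw}$. Since \eqref{e:ellipticity} forces $\uc{c:ellipticity} \in (0, 1/2)$, the factor $\uc{c:ellipticity}^8/2$ is at most $1$, and consequently $(\uc{c:vertical_rsw}\uc{c:ellipticity}^8/2)^j \leq \uc{c:vertical_rsw}$ for every $j \geq 1$, which closes the base case.

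For the inductive step, assuming the bound at level $j \geq 5$, I would establish
\[
\mathbb{P}\bigl(V^u(w(h)+4, (j+1)h)\bigr) \geq \frac{\uc{c:vertical_rsw}\,\uc{c:ellipticity}^8}{2}\,\mathbb{P}\bigl(V^u(w(h)+4, jh)\bigr),
\]
which upon iteration and combination with the base case delivers the desired bound. The strategy is to decompose the tall box $[0, w(h)+4] \times [0, (j+1)h]$ into a lower part $A = [0, w(h)+4] \times [0, jh]$, on which the inductive hypothesis applies, and an upper part $B = [0, w(h)+4] \times [(j-4)h, (j+1)h]$ of height $5h$, on which Proposition~\ref{l:v_rsw} directly supplies $\mathbb{P}(V(B)) \geq \uc{c:vertical_rsw}$. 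Since $j \geq 5$, the two pieces overlap in a rectangle of height $4h$. To merge the two crossings into one, I would introduce an auxiliary connecting event $C$ supported in the overlap: a horizontal crossing $H^{u'}(w(h)-4, h+4)$ placed inside the overlap (of probability at least $1/2$ by \eqref{e:horizontal_lower}) together with an $\uc{c:ellipticity}^8$-probable ellipticity patch that extends the connector across the full width $w(h)+4$. Since $V(A)$, $V(B)$ and $C$ are all increasing in the natural coupling of the environment $f$ and the uniform variables driving the walks, the FKG inequality \eqref{e:fkg} applied to the product measure of $f$ and the i.i.d.\ uniforms produces
\[
\mathbb{P}\bigl(V(A) \cap V(B) \cap C\bigr) \geq \Bigl(\frac{\uc{c:vertical_rsw}\,\uc{c:ellipticity}^8}{2}\Bigr)^{j} \cdot \uc{c:vertical_rsw} \cdot \frac{\uc{c:ellipticity}^8}{2}.
\]
A monotonicity and coalescence argument based on Proposition~\ref{p:monotone} and the graphical construction of Section~\ref{notation} then upgrades $V(A) \cap V(B) \cap C$ into $V^u(w(h)+4, (j+1)h)$, closing the induction.

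The main obstacle is the design of the connecting event $C$ and the verification of the inclusion $V(A) \cap V(B) \cap C \subseteq V^u(w(h)+4, (j+1)h)$. The delicate point is to ensure that a horizontal connector of probability at least $1/2$ positioned at the appropriate height of the overlap, reinforced by a short ellipticity patch, truly suffices to route the rightmost walk witnessing $V(A)$ into a trajectory that coalesces with a witness of $V(B)$ before time $jh$. This geometric bookkeeping, rather than the FKG step itself, is where the $\uc{c:ellipticity}^8/2$ factor in the inductive step originates and where the bulk of the work lies.
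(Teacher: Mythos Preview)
Your proposal is correct and follows essentially the same route as the paper. The paper carries out the gluing all at once, placing $j{+}1$ translates $V^{u_k}(w(h)+4,5h)$ with $u_k=u+k(0,4h-12)$ together with horizontal connectors $H^{u_{k+1}}(w(h)+4,h+12)$ and applying \eqref{e:fkg} in a single shot, whereas you structure the same gluing as an induction that appends one height-$5h$ block and one connector per step; the ingredients (Proposition~\ref{l:v_rsw}, the bound $\mathbb{P}(H(w(h)+4,h+12))\geq \uc{c:ellipticity}^8/2$ from \eqref{e:horizontal_lower} plus ellipticity, and the FKG product) and the geometric merging mechanism are identical.
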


\begin{proof}
Define $u_{k} = u+k(0, 4h-12)$, for $k \leq j$, and notice that
\begin{equation}
V^u(w(h) + 4, j h) \supseteq \bigcap_{k=0}^{j} V^{u_{k}}(w(h) + 4, 5h) \cap H^{u_{k+1}}(w(h)+4, h+12).
\end{equation}

The claim now follows from \eqref{e:horizontal_lower}, the \eqref{e:fkg} inequality and the fact that
\begin{equation}
\inf_{u \in \mathbb{R}^2} \mathbb{P} \big( H(w(h)+4, h+12) \big) \geq \frac{\uc{c:ellipticity}^{8}}{2},
\end{equation}
which can be deduced using the ellipticity assumption as in~\eqref{e:v_rsw_case1_ellip}.
\end{proof}

\subsection{Horizontal RSW}
\label{ss:horizontal}
~
\par In analogy with the vertical RSW stated in Proposition~\ref{l:v_rsw}, we now prove a horizontal version of this result.
But as observed in \cite{D-CTT18} for oriented percolation, there is no hope to keep extending horizontal crossings indefinitely, see \eqref{e:light_cone}.
Instead, what we prove is that it is possible to cross a wider box, as long as we stretch the vertical direction as well.

There is another important difference between the vertical RSW result above and its horizontal counterpart: in Proposition~\ref{p:h_rsw} we need to impose a lower bound on $w(h)$ as an extra hypothesis.
This restriction is the main reason why we can only deal with perturbative random walks on this paper.
See Remark~\ref{r:h_rsw_hyp} below for a more in-depth discussion on this.

\begin{proposition}
  \label{p:h_rsw}
  Fix $\xi \in (0,1)$ and $\uc{c:alpha}>1+\xi^{-1}$.
  There exists $\uc{c:h_large_rsw} = \uc{c:h_large_rsw}(\uc{c:ellipticity}, \uc{c:bad_coupling}, \uc{c:alpha}, \xi) > 0$ such that, if $h \geq \uc{c:h_large_rsw}$ and
  \begin{equation}
    w(h) \geq h^{\xi},
  \end{equation}
  then
  \begin{equation}
    \inf_{u \in \mathbb{R}^2}\mathbb{P} \Big( H^u \big( \tfrac{259}{256} w(h), 3h \big) \Big) \geq \frac{\uc{c:ellipticity}^{54}}{2^{10}}.
  \end{equation}
\end{proposition}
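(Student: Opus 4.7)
The overall plan is to adapt the RSW-type argument for oriented percolation from~\cite{D-CTT18}: build a horizontal crossing of the target box $B = [0, \tfrac{259}{256}w(h)] \times [0, 3h]$ by gluing two shifted horizontal crossings, each of width about $w(h)$, via a vertical connector obtained from the vertical RSW (Proposition~\ref{l:v_rsw}). The starting point is that, by the definition of $w(h)$ together with~\eqref{e:horizontal_lower}, horizontal crossings of boxes of shape $(w(h)-4) \times (h+4)$ have probability at least $\tfrac12$ uniformly in the anchor, and by Corollary~\ref{c:stretch_vertically} vertical crossings of tall narrow boxes of width $w(h)+4$ are bounded below uniformly. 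Uniform ellipticity allows small geometric adjustments of these boxes at a cost of a bounded number of factors of $\uc{c:ellipticity}$, as was done in~\eqref{e:v_rsw_case1_ellip}.

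Inside $B$ I would set up three building blocks. The event $E_1$ asks for a horizontal crossing of a lower sub-box essentially of the form $[0, w(h)] \times [0, h]$. The event $E_2$ asks for a horizontal crossing of an upper sub-box of the form $[\tfrac{3}{256}w(h), \tfrac{259}{256}w(h)] \times [2h, 3h]$; after adjusting the anchor to the nearest point of $\mathbb{L}_d$ and using the invariance~\eqref{e:invariance}, its probability is comparable to that of $E_1$. Finally, $E_3$ is a vertical connector placed in the middle strip $[\cdot, \cdot] \times [h, 2h]$, whose role is to ensure that a walker witnessing $E_1$ can be continued upward until it enters the sub-box used by $E_2$. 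The specific factor $\tfrac{259}{256}$ is dictated by requiring that the vertical RSW, which applies to width $w(h)+4$, fits in the chosen geometry once the necessary ellipticity pads are added.

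Then I would apply \eqref{e:fkg}. Each of $E_1, E_2, E_3$ is increasing in $f$ and depends on a distinct portion of the walker randomness $(U_u)_{u \in \mathbb{L}_d}$, which is a product measure, so FKG combined with independence yields
\begin{equation*}
\mathbb{P}(E_1 \cap E_2 \cap E_3) \geq \mathbb{P}(E_1)\,\mathbb{P}(E_2)\,\mathbb{P}(E_3),
\end{equation*}
leading to the desired lower bound $\uc{c:ellipticity}^{54}/2^{10}$ after accounting for the ellipticity factors lost when inflating or deflating the sub-boxes into the required shapes. The hypotheses $w(h) \geq h^\xi$ and $\uc{c:alpha} > 1 + \xi^{-1}$ enter only when one needs decoupling (Definition~\ref{def:decoupling}) to handle dependence between events supported on distant boxes: choosing the decoupling range $r$ to be a small power of $w(h)$, the bound $\varepsilon(w(h), h, r) \leq \uc{c:decoupling}(wh + (w+h)r + r^2)\, r^{-\uc{c:alpha}}$ becomes negligible once $h \geq \uc{c:h_large_rsw}$.

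The main obstacle is verifying that $E_1 \cap E_2 \cap E_3$ genuinely produces a left-to-right crossing of the full box $B$. Unlike Bernoulli percolation, where paths are combinatorial objects that can simply be concatenated, here each walker is a stochastic trajectory whose future is determined by its past, so a single walker must realize the entire crossing. The resolution is to exploit coalescence together with the monotonicity of Proposition~\ref{p:monotone}: the walker witnessing $E_1$ reaches the right face of the lower sub-box at some point $(w(h), t_1)$; the vertical connector supplied by $E_3$, together with monotonicity, guarantees that this walker's trajectory can be continued upward and stays inside the middle strip until it enters the sub-box used by $E_2$; a second application of monotonicity shows that this continuation dominates a walker starting on the left face of the $E_2$ sub-box, so it inherits the horizontal crossing of $E_2$ and reaches the right face of $B$. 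The Lipschitz bound~\eqref{e:lipschitz} ensures that the time budget $3h$ is enough for this chain of trajectories to fit.
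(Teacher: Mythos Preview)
Your gluing scheme has a genuine gap at exactly the point you flag as ``the main obstacle'': the inclusion $E_1 \cap E_2 \cap E_3 \subseteq H^u\big(\tfrac{259}{256}w(h),3h\big)$ does not hold with the ingredients you list. The event $E_1$ only tells you that some walker starting on $\{0\}\times[0,h]$ touches $x=w(h)$ at a time $t_1\le h$; it says nothing about where that walker is at height $2h$. Between times $t_1$ and $2h$ the walker may drift back well to the left of $x=\tfrac{3}{256}w(h)$, and then monotonicity gives no comparison with the $E_2$ crossing. Your vertical connector $E_3$ cannot repair this: the vertical RSW of Proposition~\ref{l:v_rsw} and Corollary~\ref{c:stretch_vertically} only produces crossings of boxes of width $w(h)+4$, which is essentially the full width of $B$. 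Such a crossing (with reflection on the right) merely prevents exit through the left face of \emph{its own} box; it does not force any walker to stay to the right of $x=\tfrac{3}{256}w(h)$, which is what you need. This is precisely why the horizontal RSW is hard and why the staircase argument you sketch works \emph{after} Proposition~\ref{p:h_rsw} (see Corollary~\ref{c:stretch_horizontally}) but not as a proof of it.

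The paper resolves this by a dichotomy on the exit position of a walker started from the bottom midpoint $(w(h)/2,0)$ of a box of width $w(h)+4$ and height $h+12$, run with reflection on the right wall. Either this walker has probability at least $\gamma=\uc{c:ellipticity}^{16}/16$ of exiting in the leftmost $w(h)/8$ of the top (Lemma~\ref{l:end_too_left}), or it does not (Lemma~\ref{l:hard_horizontal_rsw}). In the first case, symmetry and FKG convert this into the increasing event ``some walker from the left face ends in the top--right $w(h)/8$ corner'', which \emph{does} control the exit position and allows a two-block gluing. In the second case one knows the walker makes definite rightward progress, but the construction requires exploring a \emph{top} sub-box from above to locate the height $S$ at which its horizontal crossing first appears, and only then placing a \emph{bottom} sub-box at distance $r=w(h)/32$ below $S$. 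Because this exploration reveals negative information, FKG is unavailable and one must invoke the decoupling of Definition~\ref{def:decoupling}; the hypothesis $w(h)\ge h^\xi$ with $\uc{c:alpha}>1+\xi^{-1}$ is used exactly here, to make $\varepsilon\big((1+\tfrac{1}{32})w(h),3h,\tfrac{w(h)}{32}\big)$ small. In your FKG-only outline the hypothesis has no role, which is another symptom that the argument is incomplete.
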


Proposition \ref{p:h_rsw} is a straightforward consequence of Lemma \ref{l:end_too_left} and Lemma \ref{l:hard_horizontal_rsw} stated and proved below. 
Similarly to what was done in Corollary~\ref{c:stretch_vertically}, after we have established the above proposition, we can stretch the box horizontally even further.

\nc{c:c_rsw}
\begin{corollary}
  \label{c:stretch_horizontally}
  Fix $\xi \in (0,1)$ and $\uc{c:alpha} > 1+\xi^{-1}$.
  There exists $\uc{c:c_rsw} = \uc{c:c_rsw}(\uc{c:ellipticity}) > 0$ such that, if $h \geq \uc{c:h_large_rsw}$ and
  \begin{equation}
    w(h) \geq h^{\xi} \vee 512
  \end{equation}
  then
  \begin{equation}
    \inf_{u \in \mathbb{R}^2}\mathbb{P} \Big( H^u \Big( \tfrac{128+j}{128} w(h), 3jh \Big) \Big) \geq \uc{c:c_rsw}^{j},
  \end{equation}
  for every $j \geq 1$.
\end{corollary}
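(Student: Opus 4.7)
The plan is to follow the scheme of Corollary~\ref{c:stretch_vertically}, swapping the roles of horizontal and vertical crossings and using Proposition~\ref{p:h_rsw} as the input. The base case $j=1$ is immediate: since $\tfrac{129}{128}w(h) < \tfrac{259}{256}w(h)$, the monotonicity~\eqref{e:monotonicity_H} of $\mathbb{P}(H^u(\cdot, h))$ in the width together with Proposition~\ref{p:h_rsw} gives $\inf_u \mathbb{P}(H^u(\tfrac{129}{128}w(h), 3h)) \geq \uc{c:ellipticity}^{54}/2^{10}$, which we absorb into the definition of $\uc{c:c_rsw}$. For $j \geq 2$, set $\Delta x := w(h)/128$ and $\Delta y := 2h$, let $u_k := u + (k\Delta x, k\Delta y)$ for $k = 0,\dots,j-1$, and consider the Proposition-sized horizontal sub-boxes $B_k^{(H)} := u_k + [0, \tfrac{259}{256}w(h)] \times [0, 3h]$. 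A short calculation shows that consecutive $B_k^{(H)}$ overlap in a rectangle of width $\tfrac{257}{256}w(h)$ and height $h$; that, under the hypothesis $w(h) \geq 512$, one can fit into each overlap a vertical connector $B_k^{(V)}$ of dimensions $(w(h)+4)\times (h-4)$; and that $\bigcup_k B_k^{(H)}$ is contained in the target box $B^u(\tfrac{128+j}{128}w(h), 3jh)$ except for a small slack of $w(h)/256$ past its right face.

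The set-theoretic claim underpinning the argument is the containment
\[
 H^u\Bigl(\tfrac{128+j}{128}w(h),\, 3jh\Bigr) \;\supseteq\; \bigcap_{k=0}^{j-1} H^{u_k}\Bigl(\tfrac{259}{256}w(h),\, 3h\Bigr) \;\cap\; \bigcap_{k=0}^{j-2} V^{v_k}(w(h)+4,\, h-4),
\]
where $v_k$ denotes the anchor of $B_k^{(V)}$. This is verified by the same planar/coalescence argument that underlies Corollary~\ref{c:stretch_vertically}: the horizontal sub-crossing walker in $B_k^{(H)}$ and the vertical connector walker in $B_k^{(V)}$ must meet in space-time (two trajectories traversing a common rectangle in opposite axial directions necessarily intersect), and the coalescing rule of the graphical construction then merges them. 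Iterating this through the chain of overlaps splices the sub-crossings into a single random-walk path; the first passage of the last sub-crossing across the vertical line $\{x = \tfrac{128+j}{128}w(h)\}$ (which lies inside $B_{j-1}^{(H)}$) provides the exit on the right face of the target box.

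With the containment in hand, the \eqref{e:fkg} inequality, applied to the joint law $\mathbb{P} = \P \otimes \Pi_{v \in \mathbb{L}_d}\U[0,1]$, which is FKG since both marginals are FKG and every sub-event is increasing in $f$ and in the uniforms, together with Proposition~\ref{p:h_rsw} and \eqref{e:vertical_lower}, yields
\[
  \inf_u \mathbb{P}\bigl(H^u(\tfrac{128+j}{128}w(h),\, 3jh)\bigr) \;\geq\; \Bigl(\tfrac{\uc{c:ellipticity}^{54}}{2^{10}}\Bigr)^{j} \cdot \Bigl(\tfrac{1}{2}\Bigr)^{j-1} \;\geq\; \uc{c:c_rsw}^{j}
\]
upon setting $\uc{c:c_rsw} := \uc{c:ellipticity}^{54}/2^{11}$. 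The main obstacle is the rigorous verification of the set-theoretic containment, since random-walk trajectories are individual paths rather than connected percolation clusters; one must combine the coalescing property of the graphical construction with the right reflection built into each $V$-event. The precise calibration of $\Delta x$ (and a possible minor adjustment of the connector dimensions) so that \eqref{e:vertical_lower} applies uniformly in $j$ is what pins down the threshold $w(h) \geq 512$.
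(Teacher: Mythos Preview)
Your gluing scheme has a genuine geometric gap. With your vertical shift $\Delta y = 2h$, the overlap between $B_k^{(H)}$ and $B_{k+1}^{(H)}$ sits in the top $h$ of $B_k^{(H)}$ and the bottom $h$ of $B_{k+1}^{(H)}$. But a horizontal crossing of $B_k^{(H)}$ is a random-walk trajectory that starts somewhere on the left face (at any height in $[0,3h]$) and exits through the right face (at any height in $[0,3h]$); nothing prevents both endpoints from lying below height $2h$, in which case the trajectory never enters the overlap at all. Symmetrically, a horizontal crossing of $B_{k+1}^{(H)}$ may start above height $3h$ and, since trajectories move forward in time, it then never visits the overlap either. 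Your vertical connector $B_k^{(V)}$ lives entirely in that overlap, so it need not meet either horizontal sub-crossing, and the containment you state is false as written. The analogy with Corollary~\ref{c:stretch_vertically} breaks precisely here: there, the vertical crossings \emph{must} traverse the overlap (it sits at the top of one box and the bottom of the next), so the full-width horizontal connector necessarily intercepts them. Swapping the roles of $H$ and $V$ destroys this forcing. (A minor secondary issue: fitting a width-$(w(h)+4)$ connector inside an overlap of width $\tfrac{257}{256}w(h)$ requires $w(h)\ge 1024$, not $512$.)

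The paper's proof avoids this by taking $\Delta y = 3h$ (so consecutive $H$-boxes are stacked with no vertical overlap) and using \emph{tall} vertical connectors $V^{u_k-(0,3h)}(w(h)+4,6h)$ whose left face coincides with the left face of $B_k^{(H)}$ and whose right face coincides with the right face of $B_{k-1}^{(H)}$. Any horizontal crossing of $B_{k-1}^{(H)}$ starts strictly to the left of the connector and ends on (or to the right of) its right face, while the connector's vertical crossing spans the full time interval $[3(k-1)h,3(k+1)h]$; the intermediate-value/monotonicity argument then forces coalescence. Because these connectors have height $6h$ rather than $h-4$, one must invoke Corollary~\ref{c:stretch_vertically} (the stretched vertical RSW), not merely~\eqref{e:vertical_lower}, which is why the constant $\uc{c:c_rsw}$ in the paper involves $\uc{c:vertical_rsw}$.
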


\begin{proof}
Set $\uc{c:c_rsw} = \Big( \frac{\uc{c:vertical_rsw}^{6}\uc{c:ellipticity}^{102}}{2^{16}}\Big)^{2}$. Let, for $k \leq j$, $u_{k} = u+ \Big( k\big(\tfrac{3}{256} w(h)-4 \big), 3kh \Big)$ and notice that
\begin{equation}
H^u \Big( \tfrac{128+j}{128} w(h), 3jh \Big) \supseteq \bigcap_{k=0}^{j-1} H^{u_{k}} \Big( \tfrac{259}{256} w(h), 3h \Big) \cap V^{u_{k}-(0,3h)} \big( w(h)+4, 6h \big).
\end{equation}
The claim follows now from Proposition~\ref{p:h_rsw}, Corollary~\ref{c:stretch_vertically}, and the FKG inequality.
\end{proof}

Before the proof of the proposition, we need to introduce some extra notation.
By the definition of $w(h)$, we know that crossing a $w(h) \times h$ box vertically has positive probability.
However during this section we need to consider the probability of a vertical crossing when the random walk starts from the middle of the basis of the box.
In what follows we will properly define this modified crossing event that better explores the symmetry of our process.

Fixed $w, h \geq 1$, we define the vertical crossing from the bottom middle point as:
\begin{equation}
  \label{e:NE}
  \dot{V}^u(w, h) := \vdot{w}{h}{u} =
  \theta_u \circ
  \Big[
    \{ (w/2, 0) \} \underset{B(w, h)|_R}\longrightarrow T(w, h)
  \Big],
\end{equation}

Although the event $\dot{V}$ is less likely than $V$, we still have a uniform lower bound for its probability, as stated in the next lemma.

\begin{lemma}
  \label{l:lower_v_dot}
  Let $h \geq \uc{c:large_h}(\uc{c:ellipticity})$ and consider the corresponding $w(h)$ as in Definition~\ref{d:w_h}.
  Then, under the symmetry assumption \eqref{e:symmetry},
  \begin{equation}
    \label{e:lower_v_dot}
    \mathbb{P} \big( \dot{V}^u(w(h) + 4, h - 4) \big) > 1/8,
  \end{equation}
  for every $u \in \mathbb{R}^2$ such that $\pi_1(u) + w(h)/2 \in \mathbb{Z}$.
\end{lemma}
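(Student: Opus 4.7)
Write $w^\ast:=w(h)+4$, $T:=h-4$, and let $\bar u$ denote the midpoint of the bottom face of the box $B^u(w^\ast,T)$. The hypothesis $\pi_1(u)+w(h)/2\in\mathbb Z$ is equivalent to $\pi_1(\bar u)\in\mathbb Z$, which is precisely what is needed so that the reflection symmetry \eqref{e:symmetry_decorated} is available around the line $\{x=\pi_1(\bar u)\}$. The proof combines three ingredients: this symmetry, the monotonicity/coalescence of the graphical construction (Proposition~\ref{p:monotone}), and the vertical crossing bound $\mathbb{P}(V^u(w^\ast,T))\geq 1/2$ from \eqref{e:vertical_lower}.

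\emph{Step 1 (Symmetry swap).} The reflection $\bar\sigma_{\pi_1(\bar u)}$ exchanges the right-reflected walk from $\bar u$ (reflection at $\{x=\pi_1(u)+w^\ast\}$) with the left-reflected walk from $\bar u$ (reflection at $\{x=\pi_1(u)\}$). Hence $\dot V^u$ is mapped to the analogous event $\dot V^{L,u}$ with left reflection (reaching the top face without exiting through the right face), and by \eqref{e:symmetry_decorated}
\begin{equation*}
\mathbb{P}(\dot V^u)=\mathbb{P}(\dot V^{L,u}).
\end{equation*}
It therefore suffices to show $\mathbb{P}(\dot V^u\cup\dot V^{L,u})>1/4$.

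\emph{Step 2 (Key inclusion).} The central observation is that the right-reflected walk from the rightmost point, $Y^R_{(u_1+w^\ast,u_2)}$, dominates the right-reflected walk from the middle, $Y^R_{\bar u}$, by Proposition~\ref{p:monotone}, and once these two walks coalesce (meet) they stay together. I will argue that on the event $V^u$ one of $\dot V^u$ or $\dot V^{L,u}$ must occur: indeed, on $V^u$, $Y^R_{(u_1+w^\ast,u_2)}$ stays above the left face $\{x=\pi_1(u)\}$ during $[0,T]$, and it starts at the right face, so it must travel leftward. Either it ever dips to $\{x\le\pi_1(\bar u)\}$---in which case by monotonicity and coalescence it has merged with $Y^R_{\bar u}$ before that moment (the two trajectories cannot cross without meeting), so the middle walk also survives and $\dot V^u$ holds; or it remains strictly in the right half $\{x>\pi_1(\bar u)\}$ throughout, in which case the symmetric assertion applied via Step 1 forces $\dot V^{L,u}$ (the left-reflected middle walk stays in the left half by the mirrored coalescence argument). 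In short
\begin{equation*}
V^u\subseteq\dot V^u\cup\dot V^{L,u}.
\end{equation*}

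\emph{Step 3 (Conclusion).} Combining Steps 1 and 2 with \eqref{e:vertical_lower},
\begin{equation*}
2\,\mathbb{P}(\dot V^u)=\mathbb{P}(\dot V^u)+\mathbb{P}(\dot V^{L,u})\ge\mathbb{P}(\dot V^u\cup\dot V^{L,u})\ge\mathbb{P}(V^u)\ge \tfrac{1}{2},
\end{equation*}
so $\mathbb{P}(\dot V^u)\ge 1/4>1/8$, as required.

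\emph{Main obstacle.} The delicate point is the coalescence argument in Step 2. In the graphical construction, once two right-reflected walks with the same reflection occupy the same space-time point, they follow identical trajectories thereafter; yet before meeting, the lower (middle) walk could in principle exit through the left face while the upper (rightmost) walk is still strictly higher. Ruling this out requires a careful use of the $1$-Lipschitz property \eqref{e:lipschitz}, the coalescing nature of the family $(X^v)_{v\in\mathbb R^2}$ and, crucially, the symmetry applied at the separating line $\{x=\pi_1(\bar u)\}$, which justifies the dichotomy ``dip past the middle vs.\ stay in the right half'' and its reflected counterpart. If the direct inclusion $V^u\subseteq\dot V^u\cup\dot V^{L,u}$ turns out to be slightly too strong, one can fall back on the weaker estimate $\{X^{\bar u}\text{ stays in }B^u(w^\ast,T)\}\subseteq\dot V^u\cap\dot V^{L,u}$ together with the symmetry $\mathbb{P}(X^{\bar u}\text{ exits left first})=\mathbb{P}(X^{\bar u}\text{ exits right first})$ from \eqref{e:symmetry} to bootstrap the bound from \eqref{e:vertical_lower}.
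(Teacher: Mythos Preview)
The central claim in Step~2, namely the inclusion $V^u\subseteq\dot V^u\cup\dot V^{L,u}$, is false. Take the box $[0,2a]\times[0,T]$ with $T\ge 3a$ and an environment that pushes every walker left during times $[0,a]$ and right during times $[a,3a]$. The right-reflected corner walk $Z$ from $(2a,0)$ travels $2a\to a\to 2a$, then bounces and reaches the top without touching $0$, so $V^u$ holds. The right-reflected middle walk $M^R$ from $(a,0)$ travels $a\to 0$ and exits left at time $a$, so $\dot V^u$ fails. The left-reflected middle walk $M^L$ travels $a\to 0$, bounces, then $0\to 2a$ and exits right at time $3a$, so $\dot V^{L,u}$ also fails. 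Your case~(a) reasoning breaks down here: when $Z$ dips to the midpoint $a$ (at time $a$), $M^R$ is already at $0$, not at $a$; monotonicity gives only $M^R_t\le Z_t$, which does not force the two walks to meet. Your case~(b) reasoning is unsound for a different reason: the reflection symmetry is only distributional, so the fact that $Z$ stays in the right half on a particular configuration says nothing about $M^L$ on that same configuration.

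Your fallback, on the other hand, contains the right ingredients and is essentially the paper's argument, though two crucial steps are missing. The paper proceeds as follows: if the unreflected middle walk reaches the top with probability at least $1/8$, we are done, as that event is contained in $\dot V^u$. Otherwise, by the symmetry \eqref{e:symmetry_decorated}, the probability that the unreflected middle walk hits the right face before the left face or the top is at least $(1-1/8)/2>1/4$; this ``right-hit'' event is increasing. On the intersection of ``right-hit'' with $V^u$, the right-reflected middle walk touches the right wall and therefore coalesces with the right-reflected corner walk (since both are sandwiched at the wall by monotonicity and the reflection), which in turn reaches the top; hence this intersection lies inside $\dot V^u$. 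Applying \eqref{e:fkg} to the increasing events ``right-hit'' and $V^u$, together with \eqref{e:vertical_lower}, gives the bound $1/8$. Your fallback omits both the FKG step and this specific coalescence inclusion, which are exactly the places where the argument requires care.
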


Observe that in the above statement we restrict ourselves to some specific values of~$u$.
This is due to the fact that our proof uses the symmetry of the process.

\begin{proof}
  Start by considering the event where the random walk starting from the center of the box first hits its right face $u + \{w(h)+4\} \times [0, h-4]$ and the event where the random walk first hits the top face. More specifically, consider
  \begin{equation}
    \vbar{w}{h}{u} = \theta_u \circ \bigg[
    \{(w/2, 0)\} \underset{(0, w) \times [0, h]}\longrightarrow
    \{w\} \times (0, h]
    \bigg]
  \end{equation}
and
  \begin{equation}
    \vdotdotgood{w}{h}{u} = \theta_u \circ \bigg[
    \{(w/2, 0)\} \underset{(0, w] \times [0, h]}\longrightarrow
    T(w,h)
    \bigg].
  \end{equation}

Notice that
\begin{equation}
\vdotdotgood{w}{h}{u} \bigcup \bigg(\vbar{w}{h}{u} \cap V^{u}(w,h) \bigg) \subseteq \dot{V}^u(w,h),
\end{equation}
since in the first case the path that realizes the event also provides a crossing verifying $\dot{V}^{u}(x,h)$. In the second event of the union above, the paths that realize each of the events must intersect thus producing a crossing verifying the occurence of $\dot{V}^{u}(x,h)$.

In particular, the lemma follows if
  \begin{equation}
  \label{eq:top_hit_first}
    \mathbb{P} \bigg( \vdotdotgood{w(h) + 4}{h - 4}{u} \bigg) \geq \frac{1}{8}.
  \end{equation}

  Assume now that this is not the case.
  We now observe that the random walk starting from the central point of $B^{u}(w(h) + 4, h - 4)$ first hits the boundary of this box in one of these three sets: the \emph{left face} $u + \{0\} \times [0, h-4]$, the \emph{right face} $u + \{w(h)+4\} \times [0, h-4]$, or the \emph{top face} $u+[0, w(h)+4] \times \{h-4\}$.
  Due to the fact that $\pi_1(u) + w(h)/2$ is assumed to be integer, the probabilities of first hitting the left boundary or the right boundary are equal.
  In particular,
  \begin{equation}
    \label{e:lower_hit_right}
    \mathbb{P} \bigg( \vbar{w(h) + 4}{h - 4}{u} \bigg) = \frac{1}{2}\bigg(1 - \mathbb{P} \bigg( \vdotdotgood{w(h) + 4}{h - 4}{u} \bigg) \bigg) \geq \frac{1}{4},
  \end{equation}
since we are assuming that~\eqref{eq:top_hit_first} does not hold.

This implies, via~\eqref{e:fkg},
\begin{equation}
\mathbb{P} \bigg( \vbar{w(h) + 4}{h - 4}{u} \cap V^{u}(w(h) + 4, h - 4) \bigg) \geq \mathbb{P} \bigg( \vbar{w(h) + 4}{h - 4}{u} \bigg) \mathbb{P} \big( V^{u}(w(h) + 4, h - 4) \big) \geq \frac{1}{8},
\end{equation}
where the last inequality follows by combining~\eqref{e:vertical_lower},~\eqref{e:lower_hit_right}, and the fact that $h \geq \uc{c:large_h}(\uc{c:ellipticity})$. This concludes the proof.
\end{proof}

For the proof of our horizontal RSW result (Proposition~\ref{p:h_rsw}), we need to consider two separate cases.
Intuitively speaking, when we start a random walk from the middle point of the bottom of a box (as in the definition of $\dot{V}$), we are interested in its horizontal position as it reaches the top of that box.
We will split the proof of Proposition~\ref{p:h_rsw} in two cases, depending on whether this random walk ends too much to the left of the box or not.

We first need to introduce the event
\begin{equation}
  \label{e:dot_dot}
  \vdotdot{w}{z}{h}{u} :=
  \theta_u \circ
  \Big[
    \{ (w/2, 0)\} \underset{B(w, h)|_R}\longrightarrow [0, z] \times \{h\}
  \Big]
\end{equation}

In the next lemma, we start to prove Proposition~\ref{p:h_rsw}.
More precisely, we first treat the case when a random walk starting at the middle point of the bottom of the box has a good probability to end up too much to the left.

\begin{lemma}
  \label{l:end_too_left}
  Suppose that for some $h \geq 64 \wedge \uc{c:large_h}(\uc{c:ellipticity})$ and some $u^* \in \mathbb{R}^2$ such that $\pi_1(u^*) + w(h)/2 \in \mathbb{Z}$ we have
  \begin{equation}\label{assemi1}
    \mathbb{P} \Bigg( \vdotdot{w(h) + 4}{w(h)/8}{h + 12}{u^*} \Bigg) > \gamma.
  \end{equation}
  Then
  \begin{equation}
    \label{e:end_to_left}
    \inf_{u \in \mathbb{R}^2} \mathbb{P} \big( H^u(\tfrac{9}{8} w(h), 3 h) \big) > \frac{\gamma\uc{c:ellipticity}^{38}}{64}.
  \end{equation}
\end{lemma}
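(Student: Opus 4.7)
}

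Write $W=w(h)$. The plan is to produce, with probability at least $\gamma\uc{c:ellipticity}^{38}/64$, a single trajectory inside the target box $T=B^{u_0}(\tfrac{9W}{8},3h)$ witnessing a horizontal crossing. Equivalently, by \eqref{e:symmetry_crossings}, we construct a left-to-right crossing using the rightward-motion version of the hypothesis. The crossing will be assembled from three pieces — one contributing a factor of $\gamma$ (the symmetrized hypothesis event), and two vertical bridges each contributing a factor of $\tfrac18$ — glued by a bounded number of ellipticity steps at the corners.

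\emph{Step 1 (Symmetrization).} Apply the reflection $\bar\sigma_{x_o}$ of \eqref{e:symmetry_decorated} about the vertical axis through the midline of $B^{u^*}$. The hypothesis \eqref{assemi1} then yields a \emph{symmetrized} event $\tilde A$ of probability $>\gamma$, in which the walk started at the midpoint of the bottom of a box $B^{\tilde u^*}(W+4,h+12)$, now evolving with \emph{left} reflection on its left wall, hits the rightmost strip $\tilde u^*+[\tfrac{7W}{8}+4,W+4]\times\{h+12\}$ at time $h+12$. On $\tilde A$ the walk moves rightward by at least $\tfrac{3W}{8}+2$, never touches the left wall, and may be identified with the free walk. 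By Proposition~\ref{p:monotone}, \emph{every} walk started at the bottom of $B^{\tilde u^*}$ with first coordinate in $[W/2+2,W+4]$ also arrives in the rightmost strip at time $h+12$, yielding an entire "arm" of rightward-moving trajectories.

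\emph{Step 2 (Geometric setup).} Position $B^{\tilde u^*}$ inside $T$ so that its left wall coincides with the left face of $T$ and it is vertically centered: $\tilde u^*=u_0+(0,h-6)$. Below $B^{\tilde u^*}$ place a bottom bridge box $D$ of width $W+4$ and height approximately $h-6$ (aligned to the bottom-left of $T$), and adjacent to the upper-right corner of $B^{\tilde u^*}$ place an upper-right bridge box $U$ of width $\tfrac{W}{8}-4$ and height roughly $h+6$, flush with the right face of $T$. The three pieces $D$, $B^{\tilde u^*}$, and $U$ together span $T$ both horizontally and vertically.

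\emph{Step 3 (Auxiliary vertical crossings, Harris--FKG).} The bridge $D$ is crossed vertically with right reflection by an event of the form $\dot V$ (Lemma~\ref{l:lower_v_dot}) of probability at least $\tfrac18$; this produces a trajectory from the midpoint of the bottom of $D$ (which lies on the bottom face of $T$) up to the midpoint of its top, which coincides with the starting point of the $\tilde A$-walk. The right bridge $U$, of width $\tfrac{W}{8}-4$ and height $h+6$, is crossed vertically with right reflection by an event of probability at least $\tfrac18$ (obtained again from Lemma~\ref{l:lower_v_dot} applied at the appropriate scale, or from Corollary~\ref{c:stretch_vertically} combined with~\eqref{e:narrow_corridor}); this brings the trajectory from the top of $B^{\tilde u^*}$ out through the right face of $T$. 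After symmetrization, the three events $\tilde A$, the $D$-crossing, and the $U$-crossing are all decreasing with respect to the same partial order on $f$, so the Harris--FKG inequality \eqref{e:fkg} yields joint probability at least $\gamma\cdot\tfrac18\cdot\tfrac18=\tfrac{\gamma}{64}$.

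\emph{Step 4 (Ellipticity corner bridges).} At each of the (constantly many) junctions where the three trajectories must be stitched — at the bottom-left corner of $D$ for entry through the left face, and at the corners where $D$, $B^{\tilde u^*}$, and $U$ meet — apply uniform ellipticity \eqref{e:unif_ellipticity} to deterministically route the trajectory, in the same spirit as in~\eqref{e:v_rsw_case1_ellip}. Each such corner contributes at most $\uc{c:ellipticity}^{12}$ and an overall bookkeeping gives a factor of at least $\uc{c:ellipticity}^{38}$. Using the coalescence of trajectories in the graphical construction and the monotonicity arm from Step~1, these pieces combine into a single trajectory that enters $T$ through its left face, crosses upward through $D$, rightward by $\tfrac{3W}{8}$ through $B^{\tilde u^*}$, and upward-rightward through $U$ to exit the right face, yielding $H^{u_0}(\tfrac{9W}{8},3h)$. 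Uniformity in $u$ follows from the $\mathbb{L}_d$-translation invariance of the laws (Remark~\ref{r:shift}) combined with~\eqref{e:sup_inf_H}.

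\emph{Main obstacle.} The delicate step is geometric: the arm produced by $\tilde A$ only covers the central horizontal stretch of $\tfrac{3W}{8}$, and the remaining widths $W/2+2$ (to reach the left face) and $W/8-4$ (to reach the right face) grow with $W$, ruling out direct ellipticity bridges. The plan therefore relies on absorbing these two horizontal gaps into the \emph{vertical} bridges $D$ and $U$ — in $D$ via $\dot V$ and monotonic coalescence bringing a left-face trajectory into $B^{\tilde u^*}$'s midline, and in $U$ via a vertical crossing in a thin strip whose right wall is the right face of $T$ itself, so that the reflection rule delivers the trajectory onto the right face. Verifying these two absorptions carefully, together with confirming that $\tilde A$, $D$-crossing and $U$-crossing share a common FKG monotonicity direction, is where the bulk of the work lies.
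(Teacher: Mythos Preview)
Your plan has a structural gap that the paper's proof is specifically designed to avoid: the trajectory you assemble never actually starts on the left face of the target box $T$. The $\dot V$ event in your bottom bridge $D$ launches the walk from the \emph{midpoint} of $D$'s bottom, not from $L^T$, and the ``ellipticity corner bridge'' you invoke at the bottom-left of $D$ would have to cover a horizontal distance of order $W/2$ --- not a bounded number of steps. Likewise, $\dot V(D)$ only guarantees the walk reaches \emph{some} point on the top of $D$; it does not deliver the walk to the midpoint of $D$'s top, so your claimed handoff to the starting point of the $\tilde A$-walk fails. Monotonicity (Proposition~\ref{p:monotone}) goes the wrong way here: the walk started on the left face stays to the \emph{left} of the midpoint walk, so the $\dot V$ event gives no control over it. A parallel problem occurs at the other end: the vertical crossing $\dot V(U)$ with right reflection says the walk reaches the top of $U$ without exiting left, but it does \emph{not} force the walk to touch the right wall of $U$, so you never witness an exit through $R^T$. (Separately, none of the cited results --- Lemma~\ref{l:lower_v_dot}, Corollary~\ref{c:stretch_vertically}, or \eqref{e:narrow_corridor} --- yield a $1/8$ lower bound for a box of width $W/8-4$ and height $h$; the first two are specific to width $w(h)+4$, and the last gives only $\uc{c:ellipticity}^{h+2}$.) Finally, the claim in Step~1 that on $\tilde A$ the walk ``never touches the left wall, and may be identified with the free walk'' is unjustified: the left-reflected walk can bounce and still end in the right strip.

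The paper addresses exactly this difficulty by first converting the hypothesis \eqref{assemi1} --- a midpoint-started event --- into an event that \emph{does} start from the left face. This is the role of the event in \eqref{e:green_one}: a walk from $L(w,h)$, with right reflection, landing in the top-right strip $[w-z,w]\times\{h\}$. The conversion is the substantive part of the proof and proceeds by a case analysis on whether the non-reflecting version \eqref{assemi2} already has probability $\geq \gamma/2$; if not, one subtracts it from the hypothesis to get a free-walk event \eqref{eq:prob_1}, reflects via \eqref{e:symmetry_decorated} to obtain \eqref{eq:prob_12}, passes to the monotone superset \eqref{e:vvdot_lower}, and then intersects with $H^{u^*}(w(h)+4,h+12)$ via FKG. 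Only after securing this left-face-to-top-right event does the paper stack a single translated pair $\dot V^{u+v}\cap H^{u+v}$ on top (Figure~\ref{fig:crazy_intersection}), giving a horizontal crossing of width $\tfrac{5}{4}w(h)+4$ directly. Your three-block vertical scaffolding tries to bypass this conversion, but without it there is no mechanism to anchor the crossing at $L^T$.
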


Intuitively speaking, the above lemma tells us that under \eqref{assemi1} horizontal RSW property holds.
Later we will treat the complementary case.

\begin{proof} We first define, for $u \in \mathbb{R}^2$, and $0< z < w$, the event
  \begin{equation}
    \label{e:green_one}
    \greenone{w}{z}{h}{u} :=
    \theta_u \circ
    \Big[
      L(w, h) \underset{B(w, h)|_R}\longrightarrow [w - z, w] \times \{h\}
    \Big].
  \end{equation}

  In the above event, no random walk trajectory starting either from the bottom or from the right boundary of $B(w,h)$ reaches the the top boundary of the box at a distance larger than $z$ from the right corner.

  The first step of the proof will be to show that
  \begin{equation}
    \label{e:do_bounce}
    \mathbb{P}
    \bigg(
    \greenone{w(h) + 4}{w(h)/8}{h + 12}{u^*}
    \bigg)
    \geq \uc{c:ellipticity}^{8} \frac{\gamma}{4}.
  \end{equation}
  To see why this holds, we define the non-bouncing version of the event appearing in \eqref{e:green_one}
  \begin{equation}
    \neasttwo{w}{z}{h}{u} :=
    \theta_u \circ
    \Big[
      L(w, h) \underset{B(w, h)}\longrightarrow [w - z, w] \times \{h\}
    \Big]
  \end{equation}
  and observe that it is contained in the event in \eqref{e:green_one}.
  Therefore, in order to prove \eqref{e:do_bounce}, we can assume
  \begin{equation}
    \label{assemi2}
    \mathbb{P}
    \bigg(
      \neasttwo{w(h) + 4}{w(h)/8}{h + 12}{u^*}
    \bigg)
    \leq \frac{\gamma}{2},
  \end{equation}
  because, otherwise the left-hand side in \eqref{e:do_bounce} would be bounded from below by $\gamma /2$ which is greater than the desired bound.
  So now our intermediate objective is to prove \eqref{e:do_bounce} assuming \eqref{assemi2}.
  First define
  \begin{equation}
    \vdotdotdot{w}{z}{h}{u} := \theta_u \circ \bigg[
    \{(w/2, 0)\} \underset{[0, w] \times [0, h]}\longrightarrow
    [0, z] \times \{h\}
    \bigg].
  \end{equation}
  and observe that for every $u \in \mathbb{R}^2$,
  \begin{equation}
    \label{e:figurinhas}
    \vdotdot{w}{z}{h}{u} \setminus \neasttworef{w}{z}{h}{u} \subseteq \vdotdotdot{w}{z}{h}{u}.
  \end{equation}
  Indeed, on the event appearing in the left hand side, any path that entails the occurrence of the vertical crossing cannot touch the right side of the box.

  Using Assumptions~\eqref{assemi1} and~\eqref{assemi2} above and symmetry, \eqref{e:figurinhas} implies that
  \begin{equation}\label{eq:prob_1}
    \mathbb{P}
    \bigg(
    \vdotdotdot{w(h)+4}{w(h)/8}{h+12}{u^*}
    \bigg)
    \geq
    \mathbb{P}
    \bigg(
    \vdotdot{w(h)+4}{w(h)/8}{h+12}{u^*}
    \bigg)
    -
    \mathbb{P}
    \bigg(
    \neasttworef{w(h)+4}{w(h)/8}{h+12}{u^*}
    \bigg)
    \geq
    \frac{\gamma}{2},
  \end{equation}
  and, by symmetry we have
  \begin{equation}\label{eq:prob_12}
    \mathbb{P}
    \bigg(
    \vdotdotdotref{w(h)+4}{w(h)/8}{h+12}{u^*}
    \bigg)
    \geq \frac{\gamma}{2}.
  \end{equation}

 Now define
  \begin{equation}
    \vvdot{w}{z}{h}{u} :=
    \theta_u \circ
    \bigg[
    \{(w/2,0)\} \underset{B(w, h)|_R}\longrightarrow [w - z, w] \times \{h\}
    \bigg],
  \end{equation}
so that we also have
  \begin{equation}
    \label{e:vvdot_lower}
    \mathbb{P}
    \bigg(
    \vvdot{w(h)+4}{w(h)/8}{h+12}{u^*}
    \bigg)
    \geq \frac{\gamma}{2}.
  \end{equation}
  since the event above contains the one appearing in \eqref{eq:prob_12} but has the advantage of being monotone increasing.
  By the FKG inequality~\eqref{e:fkg} together with~\eqref{e:vvdot_lower},~\eqref{e:horizontal_lower}, and uniform ellipticity, we obtain
  \begin{equation}
    \mathbb{P}
    \bigg(
    \greenone{w(h) + 4}{w(h)/8}{h + 12}{u^*}
    \bigg)
    \geq
    \mathbb{P}
    \bigg(
    \vvdot{w(h) + 4}{w(h)/8}{h + 12}{u^*}
    \mcap \;\;
    H^{u^*}(w(h) + 4, h + 12)
    \bigg)
    \overset{\eqref{e:vvdot_lower}}
    \geq \uc{c:ellipticity}^{8} \frac{\gamma}{4}.
  \end{equation}
  which proves~\eqref{e:do_bounce}.

  We consider the intersection of events illustrated in Figure~\ref{fig:crazy_intersection}.
  Fix $v = (x,h+12)$ such that $x+\tfrac{1}{2}w(h)+2$ is an integer in the interval $\big[\tfrac{3}{4}w(h), \frac{7}{8}w(h)+4 \big]$, and define
  \begin{equation}\label{eq:crazy_intersection}
    \greenone{w(h) + 4}{w(h)/8}{h+12}{u} \; \mcap \;\;
    \dot{V}^{u+v}(w(h)+4, h+12) \;\; \cap \;\; H^{u+v}(w(h)+4, h+12),
  \end{equation}
  and notice that, in the event above, the rectangle $u+\big[ 0, \tfrac{5}{4}w(h)+4 \big] \times [0, 2h+24]$ has a horizontal crossing (see Figure~\ref{fig:crazy_intersection}).

\begin{figure}
  \centering
  \begin{tikzpicture}[scale=0.8]
    \draw[thick] (0, 0) rectangle (2.6, 8);

    \draw[color=green!70!black, thick] (0, 0) rectangle (2, 4);
    \draw (0, .3) .. controls (0.4, .5) and (1, 1) .. (2, 1.3);
    \draw (2, 1.3) .. controls (1.4, 1.8) .. (2, 2.6);
    \draw (2, 2.6) .. controls (.8, 3.3) .. (1.85, 4);
    \draw[very thick, color=black] (1.6, 4) -- (2, 4);
    \fill[pattern=north west lines, pattern color=green!70!black] (1.6, 3.9) rectangle (2,4.1);
    \draw[color=green!70!black] (1.6, 3.9) rectangle (2,4.1);
    \draw[shift={(0.6,4)}, color=purple, thick] (0, 0) rectangle (2, 4);
    \draw[shift={(0.6,4)}] (1, 0) .. controls (0.4, .5) and (1, 1) .. (2, 1.3);
    \draw[shift={(0.6,4)}] (2, 1.3) .. controls (1.4, 1.8) .. (2, 2.6);
    \draw[shift={(0.6,4)}] (2, 2.6) .. controls (.8, 3.3) .. (1.5, 4);
    \draw[shift={(0.6,4)}] (0, 1.3) .. controls (0.5,2.7) and (1.7, 2.5) .. (2, 2.6);
    \draw[thick, blue] (0, 0.1) .. controls (2.5, 0.3) and (2.6,0.5) .. (2.3,1) .. controls (2.2,1.2) and (2,1.5) .. (2.3, 2);
    \draw[thick, blue] (2.3, 2) .. controls (2.6,2.6) and (2.6,2.6) .. (2.3, 3.5) .. controls (2.2,4) .. (2.6, 4.5);

    \fill (0.6, 4) circle (.07);
    \fill (0, 0) circle (.07);
    \node at (-0.2, -0.2) {$u$};
    \node[anchor=north] at (0.6, 4) {\small $u+v$};

    \draw[<->] (0, -0.4) -- (2, -0.4);
    \node[anchor=north] at (1, -0.4) {\small $w(h) + 4$};
    \draw[dotted, thick] (2, 0) -- (2, -0.6);

    \draw[<->] (0.6, 8.4) -- (2.6, 8.4);
    \draw[<->] (0, 9.2) -- (0.6, 9.2);
    \node[anchor=south] at (1.6, 8.4) {\small $w(h) + 4$};
    \node[anchor=south] at (0.3, 9.2) {\small $\frac{1}{8} w(h)$};
    \draw[dotted, thick] (0.6, 8) -- (0.6, 9.4);
    \draw[dotted, thick] (2.6, 8) -- (2.6, 8.6);
    \draw[dotted, thick] (0, 8) -- (0, 9.4);

    \draw[<->] (0, -1.2) -- (2.6, -1.2);
    \node[anchor=north] at (1.3, -1.2) {\small $\frac{9}{8}w(h)+4$};
    \draw[dotted, thick] (0, 0) -- (0, -1.4);
    \draw[dotted, thick] (2.6, 0) -- (2.6, -1.4);

    \draw[<->] (-0.4, 0) -- (-0.4, 4);
    \node[anchor=east] at (-0.4, 2) {\small $h+8$};
    \draw[dotted, thick] (0, 0) -- (-0.6, 0);
    \draw[dotted, thick] (0, 4) -- (-0.6, 4);

    \draw[<->] (3, 4) -- (3, 8);
    \node[anchor=west] at (3, 6) {\small $h + 8$};
    \draw[dotted, thick] (2.6, 4) -- (3.2, 4);
    \draw[dotted, thick] (2.6, 8) -- (3.2, 8);

  \end{tikzpicture}
  \caption{The intersection in~\eqref{eq:crazy_intersection}. Notice that the larger rectangle must have a  horizontal (blue) crossing in this intersection.}
  \label{fig:crazy_intersection}
\end{figure}
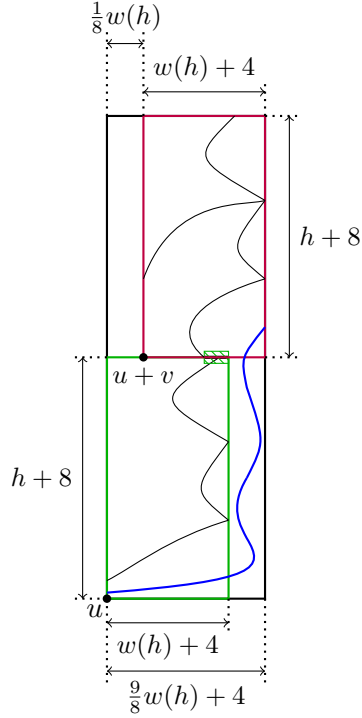

  Furthermore, due to \eqref{e:fkg}, Lemma~\ref{l:lower_v_dot}, uniform ellipticity, and~\eqref{e:do_bounce}, we readily obtain
  \begin{equation}
    \begin{split}
      & \mathbb{P} \big(H^u(\tfrac{5}{4}w(h)+4, 2h+24)\big)\\
      & \geq \mathbb{P} \bigg( \greenone{w(h)+4}{w(h)/8}{h+12}{u} \bigg) \mathbb{P} \big( \dot{V}^{u+v}(w(h)+4, h+12) \big) \mathbb{P} \big(H^{u+v}(w(h)+4, h+12) \big) \\
      & \overset{h \geq \uc{c:large_h}(\uc{c:ellipticity})}{\geq} \frac{\gamma\uc{c:ellipticity}^{8}}{4} \frac{\uc{c:ellipticity}^{17}}{8}\frac{\uc{c:ellipticity}^{9}}{2} = \frac{\gamma\uc{c:ellipticity}^{34}}{64}.
    \end{split}
  \end{equation}

  Finally, we have to relax the condition $\pi_1(u)+w(h)/2\in\mathbb{Z}$. Note that~\eqref{e:sup_inf_H} implies
  \begin{equation}
    \begin{split}
      &\inf_{u \in \mathbb{R}^2} \mathbb{P} \big( H^u(\tfrac{9}{8} w(h), 3h) \big) \geq \sup_{u \in \mathbb{R}^2} \mathbb{P} \big( H^u(\tfrac{9}{8}w(h)+4, 3h-4) \big) \\
      &\geq \sup_{u \in \mathbb{R}^2} \mathbb{P} \big( H^u(\tfrac{5}{4} w(h)+4, 2 h+24) \big) > \frac{\gamma\uc{c:ellipticity}^{34}}{64},
    \end{split}
  \end{equation}
  whenever $h \geq 64 \wedge \uc{c:large_h}(\uc{c:ellipticity})$.
    This concludes the proof.
\end{proof}
The next lemma treats the missing case of the proof of Proposition~\ref{p:h_rsw}.
Note that this is the only part that requires the hypothesis on $w(h)$, which is required for a proper conditional decoupling.

\begin{lemma}
  \label{l:hard_horizontal_rsw}
  Fix $\xi \in (0,1)$ and $\uc{c:alpha}> 1+\xi^{-1}$.
  There exists $\uc{c:h_large_rsw} = \uc{c:h_large_rsw}(\uc{c:ellipticity}, \uc{c:bad_coupling}, \uc{c:alpha}, \xi) > 0$ such that, for all $h \geq \uc{c:h_large_rsw}$ the following holds.
  If
  \begin{equation}
    \label{e:bootstrap_hypothesis}
    w(h) \geq h^{\xi},
  \end{equation}
  and for all $u \in \mathbb{R}^2$ such that $\pi_1(u) + w(h)/2 \in \mathbb{Z}$, we have
  \begin{equation}\label{bigassumption}
    \mathbb{P} \bigg( \vdotdot{w(h)+4}{w(h)/8}{h+12}{u} \bigg) \le \gamma := \frac{\uc{c:ellipticity}^{16}}{16},
  \end{equation}
  then
  \begin{equation}
    \label{e:hard_horizontal_rsw}
   \inf_{u\in\mathbb{R}^2} \mathbb{P} \big( H^u \big( (1+ \tfrac{1}{128})w(h), 3h \big) \big) \geq \frac{\uc{c:ellipticity}^{24}}{256}.
  \end{equation}
\end{lemma}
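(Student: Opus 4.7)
The plan is to combine the hypothesis~\eqref{bigassumption} with the reflection symmetry~\eqref{e:symmetry_decorated} of the environment, the decoupling of Definition~\ref{def:decoupling}, and the vertical RSW of Proposition~\ref{l:v_rsw}, so as to glue two vertical half-crossings of boxes of the form $B(w(h)+4,h+12)$ offset horizontally by $w(h)/128$ into a single horizontal crossing of $B^u((1+\tfrac{1}{128})w(h),3h)$.

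The first step will be to apply the reflection $\bar{\sigma}$ around the vertical axis through the centre of the box (which is an integer under the hypothesis on $u$) to~\eqref{bigassumption}, obtaining the mirror statement that the middle-bottom walker with \emph{left} reflection exits the top in the rightmost $w(h)/8$ with probability at most $\gamma$. Together with Lemma~\ref{l:lower_v_dot}, plus an ellipticity bridge to absorb the height mismatch between $h-4$ and $h+12$, I will conclude that both reflected versions of the middle walker exit the top of $B^u(w(h)+4,h+12)$ inside the central band $[w(h)/8,\,7w(h)/8]$ with probability at least a constant $c_0\ge 1/16$. The precise value $\gamma=\uc{c:ellipticity}^{16}/16$ in~\eqref{bigassumption} was chosen so that this difference is positive.

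Next comes the geometric construction. I will take two boxes $B_1=B^u(w(h)+4,h+12)$ and $B_2=B^{u+(w(h)/128,\,0)}(w(h)+4,h+12)$, whose union has width slightly above $(1+\tfrac{1}{128})w(h)$. In $B_1$ I use the middle walker with right reflection exiting in the central band of its top face, and in $B_2$ the middle walker with left reflection exiting in its central band; the two exit intervals on the common horizontal line $y=u_y+h+12$ overlap in a range of length $\Theta(w(h))$, so by Proposition~\ref{p:monotone} and a short ellipticity bridge the two half-crossings can be concatenated into a trajectory that advances by $w(h)/128$ horizontally. Stacking a tall vertical crossing of the column $u+[w(h)/128,\,w(h)/128+w(h)+4]\times [h+12,\,3h]$ via Corollary~\ref{c:stretch_vertically}, and using uniform ellipticity for the final short horizontal links to the left and right faces, will complete a horizontal crossing of the target box.

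The main obstacle is that the joint event in $B_1$ and $B_2$ is not monotone in $f$, because the two boxes use opposing reflection rules, so the FKG inequality~\eqref{e:fkg} does not combine the two probabilities directly. This is exactly the role of the decoupling: applying Definition~\ref{def:decoupling} to a rectangle containing $B_1\cup B_2$ at separation $r$ of order $w(h)/16$, the coupling error is bounded by $\varepsilon(O(w(h)),O(h),w(h)/16)=O(w(h)\cdot h\cdot w(h)^{-\uc{c:alpha}})$. Under the hypotheses $w(h)\ge h^{\xi}$ and $\uc{c:alpha}>1+\xi^{-1}$, this error tends to $0$ as $h\to\infty$ and can be absorbed by taking $h\ge \uc{c:h_large_rsw}$ large enough. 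On the coupled, finite-range field the two events become independent, their joint probability is at least $c_0^2/2$, and collecting the powers of $\uc{c:ellipticity}$ from the short horizontal links yields the announced lower bound $\uc{c:ellipticity}^{24}/256$.
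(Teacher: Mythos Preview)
Your plan has a genuine gap in the decoupling step. The two boxes $B_1=B^u(w(h)+4,h+12)$ and $B_2=B^{u+(w(h)/128,0)}(w(h)+4,h+12)$ share the same vertical span and are offset horizontally by only $w(h)/128$, so they \emph{overlap} on a strip of width roughly $w(h)$. Definition~\ref{def:decoupling} makes $f^{C,r}|_A$ and $f^{C,r}|_B$ independent only when $\d(A,B)>r$; with $A\subset B_1$ and $B\subset B_2$ overlapping, there is no separation at all, and the finite-range field gives you nothing. The hypothesis $w(h)\ge h^\xi$ controls the \emph{size} of the error term $\varepsilon$, not the \emph{geometry} of the sets being decoupled, so it cannot rescue this.

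The geometric gluing is also unclear as written: the two middle walkers start at different points and obey opposite reflection rules, and both terminate on the \emph{same} horizontal line $\{y=\pi_2(u)+h+12\}$; monotonicity (Proposition~\ref{p:monotone}) compares walkers at the same time started at the same height, but does not by itself concatenate two such paths into a single trajectory crossing the wider box.

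The paper avoids both issues by stacking the two pieces \emph{vertically} rather than horizontally. One first explores a top sub-box $B_{\text{top}}$ (shifted to the right by $\eta w(h)$) downwards until a horizontal crossing is witnessed at a random height $S$; then a bottom box $B^S_{\text{bottom}}$ is placed at vertical distance $r=w(h)/32$ below $S$. Now $B_{\text{top}}\cap\{y\ge S\}$ and $B^S_{\text{bottom}}$ are genuinely $r$-separated, so the decoupled field makes the two events independent conditionally on $S$. The complement of \eqref{bigassumption} guarantees that the bouncing walker in $B^S_{\text{bottom}}$ exits its top to the right of $w(h)/8$, and the Lipschitz bound over the gap of height $r$ keeps it to the right of the left edge of $B_{\text{top}}$; monotonicity against the horizontal crossing of $B_{\text{top}}$ then forces it across the wider box. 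This top-down exploration plus vertical separation is the missing idea in your plan.
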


\begin{proof}
  \nc{c:h_large_rsw}
  We take $\uc{c:h_large_rsw} = \uc{c:h_large_rsw}(\uc{c:ellipticity}, \uc{c:bad_coupling}, \uc{c:alpha}, \xi) \geq \uc{c:large_h}(\uc{c:ellipticity})$ large enough so that $w(h) \geq 128$ and
  \begin{equation}
    \label{e:epsilon_tilde_small}
    \varepsilon \Big( \big(1 + \tfrac{1}{32} \big) w(h), 3h, \tfrac{w(h)}{32} \Big) < \frac{\uc{c:ellipticity}^{24}}{256}, \;\; \text{for all $h \geq \uc{c:h_large_rsw}$},
  \end{equation}
  which is possible because of \eqref{e:bootstrap_hypothesis}, the fact that $\uc{c:alpha}> 1+\xi^{-1}$, and \eqref{e:polynomial}.

  Observing, by \eqref{e:horizontal_lower}, that for all $u\in\mathbb{R}^2$ and all $h > \uc{c:large_h}(\uc{c:ellipticity})$, we have
\begin{equation}
  \label{Hencore}
  \begin{split}
    \mathbb{P} \big( H^u(w(h) - 4, h + 4) \big)&\ge1/2,\\
    \mathbb{P} \big( H^u(w(h) + 4, h + 12) \big)&\ge \uc{c:ellipticity}^8/2,
  \end{split}
\end{equation}
where we used the uniform ellipticity stated in \eqref{e:unif_ellipticity_0} for the second inequality.

Let us recall also that, by Lemma \ref{l:lower_v_dot}, for all $h \geq \uc{c:large_h}(\uc{c:ellipticity})$ and for every $u \in \mathbb{R}^2$ such that $\pi_1(u) + w(h)/2 \in \mathbb{Z}$, we have
\begin{equation}
  \label{vdotencore}
  \mathbb{P} \big( \dot{V}^u(w(h) + 4, h - 4) \big) > 1/8.
\end{equation}
Moreover, uniform ellipticity from~\eqref{e:unif_ellipticity_0} once again allows us to deduce that, for every $h \geq \uc{c:large_h}(\uc{c:ellipticity})$ and $u \in \mathbb{R}^2$ such that $\pi_1(u) + w(h)/2 \in \mathbb{Z}$,
\begin{equation}\label{vdotplus}
    \mathbb{P} \big( \dot{V}^u(w(h) + 4, h + 12) \big) > \uc{c:ellipticity}^{16}/8.
\end{equation}

Observe now that for all $h \geq \uc{c:large_h}(\uc{c:ellipticity})$ and for every $u \in \mathbb{R}^2$ such that $\pi_1(u) + w(h)/2 \in \mathbb{Z}$,
\begin{equation}\label{dansecondclaim}
  \mathbb{P} \bigg( \vdotdotprime{w(h)+4}{w(h)/8}{h+12}{u} \bigg) =
  \mathbb{P} \bigg(\vdot{w(h)+4}{h+12}{u} \;\; \setminus \;\; \vdotdot{w(h)+4}{w(h)/8}{h+12}{u} \bigg) \ge \frac{\uc{c:ellipticity}^{16}}{8}- \gamma \overset{\eqref{bigassumption}}= \frac{\uc{c:ellipticity}^{16}}{16},
\end{equation}
which follows from \eqref{vdotplus} and \eqref{bigassumption}.
Observe also that the event in \eqref{dansecondclaim} above is monotone in the sense of \eqref{eq:increasing}.

Having collected these observations, we are now ready to prove the main statement of the lemma.
Recalling that $w(h) \geq 256$,~\eqref{e:cross_contained} yields
\begin{equation}
\inf_{u\in\mathbb{R}^2} \mathbb{P} \big( H^u \big( (1+ \tfrac{3}{256})w(h), 3h \big) \big) \geq \inf_{\substack{u \in \mathbb{R}^2;\\ \pi_1(u) + w(h)/2 \in \mathbb{Z}}} \mathbb{P} \big( H^u \big( (1+ \tfrac{1}{64})w(h), 3h \big) \big),
\end{equation}
from which it suffices to verify that
\begin{equation}
\inf_{\substack{u \in \mathbb{R}^2;\\ \pi_1(u) + w(h)/2 \in \mathbb{Z}}} \mathbb{P} \big( H^u \big( (1+ \tfrac{1}{64})w(h), 3h \big) \big) \geq \frac{\uc{c:ellipticity}^{24}}{256}.
\end{equation}
Fix
\begin{equation}
  \label{e:eta}
  \eta := \frac{1}{32}
\end{equation}
and define the box
\begin{equation*}
  B = u + [0, \tilde{w}] \times [0, \tilde{h}],
\end{equation*}
where $\tilde{w}=(1+\eta)w(h)-4$ and $\tilde{h}=2h+16+w(h)/32$.
Note that
\begin{itemize}
\item $\tilde{w}$ is roughly $(1 + \eta)$ times larger than $w(h)$,
\item $\tilde{h}$ is smaller or equal to $3 h$.
\end{itemize}
These two observations together guarantee us that crossing $B$ horizontally, compared to the well-balanced box $w(h) \times h$, provides an actual macroscopic improvement, similar to the RSW result in \cite{D-CTT18}.

\medskip

We are now going to consider the decoupling provided by Definition~\ref{def:decoupling}, with $C = B$ and
\begin{equation}
  \label{e:r_spacing}
  r = \frac{w(h)}{32}.
\end{equation}
This decoupling provides us with a field $f^{B, r}$ that is very likely to coincide with our original environment (see \eqref{e:likely_equal}) and at the same time has short range of dependencies inside $B$, see~\eqref{e:finite_range}.

Observe that our hypothesis \eqref{bigassumption} is stated for every $u \in \mathbb{R}^2$ such that $\pi_1(u) + w(h)/2 \in \mathbb{Z}$.
However, the conclusion \eqref{e:hard_horizontal_rsw} of the lemma gives a bound that is uniform over all $u \in \mathbb{R}^2$.
In order to fix this discrepancy, we use \eqref{e:cross_contained} to obtain that
\begin{display}
  \label{e:nest_tilde_box}
  for every $u \in \mathbb{R}$, there is $u' \in \mathbb{R}^2$ satisfying $\pi_1(u') + w(h)/2 \in \mathbb{Z}$\\
  and such that $H^{u'}(\tilde{w}, \tilde{h})$ is contained in $H^u \big( (1 + 1/64)w(h), 3h \big)$,
\end{display}
which is possible because $\tilde{w} \geq (1 + 1/64) w(h) + 10$ and $\tilde{h} \leq 3h - 10$.

We now claim that
\begin{display}
  \label{e:cross_B_enough}
  if $\P_{f^{B, r}} \big( H^u(\tilde{w}, \tilde{h}) \big) \geq \frac{\uc{c:ellipticity}^{24}}{128}$ for all $u \in \mathbb{R}^2$ such that $\pi_1(u) + w(h)/2 \in \mathbb{Z}$, then the desired bound \eqref{e:hard_horizontal_rsw} will hold.
\end{display}
Indeed, if we assume $\P_{f^{B, r}} \big( H^u(\tilde{w}, \tilde{h}) \big) \geq {\uc{c:ellipticity}^{24}}/{128}$, then we can estimate
\begin{equation}
  \label{e:derive_cross_B_enough}
  \begin{split}
    \inf_{\substack{u \in \mathbb{R}^2;\\ \pi_1(u) + w(h)/2 \in \mathbb{Z}}} \mathbb{P} \big( H^u \big( (1 + \tfrac{1}{64}) w(h), 3h \big) \big)
    & \overset{\eqref{e:nest_tilde_box}}\geq \inf_{\substack{u' \in \mathbb{R}^2;\\ \pi_1(u') + w(h)/2 \in \mathbb{Z}}} \mathbb{P} \big( H^{u'} (\tilde{w}, \tilde{h}) \big)\\
    & \overset{\eqref{e:epsilon_tilde_small}}\geq \inf_{\substack{u' \in \mathbb{R}^2;\\ \pi_1(u') + w(h)/2 \in \mathbb{Z}}} \P_{f^{B, r}} \big( H^{u'} (\tilde{w}, \tilde{h}) \big) - \frac{\uc{c:ellipticity}^{24}}{256}\\
    & \geq \frac{\uc{c:ellipticity}^{23}}{256},
  \end{split}
\end{equation}
hence \eqref{e:hard_horizontal_rsw} holds and the claim \eqref{e:cross_B_enough} is proved.  Let us now prove that
\begin{equation}
  \label{e:P_f}
  \P_{f^{B, r}} \big( H^u(\tilde{w}, \tilde{h}) \big) \geq \frac{\uc{c:ellipticity}^{24}}{128}.
\end{equation}

We first consider a sub-box $B_{\text{top}}$ that lies at the top of $B$:
\begin{equation}
  \label{e:B_top}
  B_{\text{top}} = u + (\eta w(h), \tilde{h} - h - 4) \times [w(h) - 4, h + 4].
\end{equation}
see the red box in Figure~\ref{fig:crazy_intersection}.
We consider the event that it is crossed horizontally:
\begin{equation}
  \label{e:H_top}
  H_{\text{top}} = \Big\{B_{\text{top}} \text{ is crossed horizontally} \Big\}
\end{equation}
and observe that for the modified field $f^{B, r}$ we have
\begin{equation}
  \label{e:cross_B_top}
  \P_{f^{B, r}} \big( H_{\text{top}} \big) \geq \frac{1}{2} - \varepsilon(\tilde{w}, \tilde{h}, r),
\end{equation}
by \eqref{e:horizontal_lower} and \eqref{e:likely_equal}.

Note also that $B_{\text{top}}$ is located in the top-right corner of $B$, so that we will use the horizontal crossing defining $H_{\text{top}}$ as the last piece in the horizontal crossing of $B$.

An important part of our argument now is to explore $B_{\text{top}}$ from top to bottom until we are able to witness the horizontal crossing.
More precisely let
\begin{equation}
  \label{e:explore_B_top}
  S = \inf \Big\{ s \in \mathbb{R}; B_{\text{top}} \cap \mathbb{R} \times [s, \infty) \text{ is crossed horizontally} \Big\},
\end{equation}
where we implicitly assume that $\inf \varnothing = - \infty$, noting that $H_{\text{top}} = \left\{S > -\infty\right\}$.

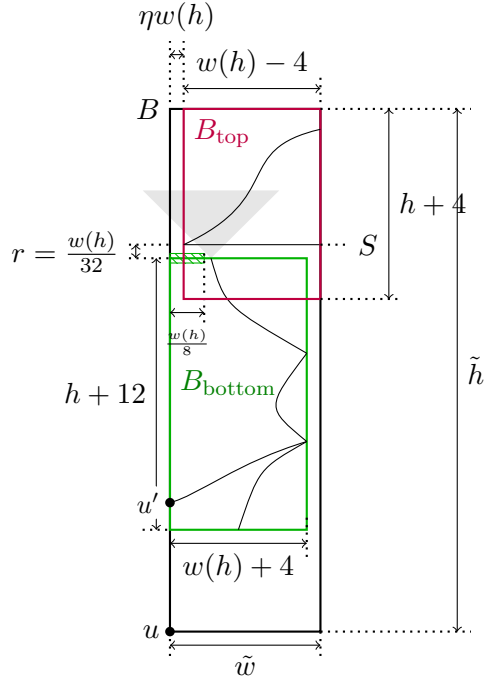
\begin{figure}
  \centering
  \begin{tikzpicture}[scale=.9]
    \fill (0, .5) circle (.07);
    \node[anchor=east] at (0, .5) {$u$};
    \fill[opacity=.1] (.6, 6) -- (1.6, 7) -- (-.4, 7) -- (.6, 6);
    \draw[thick] (0, .5) rectangle (2.2, 8.2);
    \draw (0, 8.2) node[anchor=east] {$B$};
    \draw[shift={(0,2)},color=green!70!black,thick] (0, 0) rectangle (2, 4);
    \draw[shift={(0,2)}] (0, 2.5) node[anchor=north west,color=green!50!black] {$B_{\text{bottom}}$};

    \draw[dotted, thick] (-.4, 2) -- (0, 2);
    \draw[<->] (-.2, 2) -- (-.2, 6);
    \node[anchor=east] at (-.2, 4) {$h + 12$};
    \fill[color=white] (-.3, 2.4) circle (.26);
    \fill (0, 2.4) circle (0.07);
    \node[anchor=east] at (0, 2.4) {\small $u'$};
    \draw[shift={(0,2)}] (0, .4) .. controls (0.4, .5) and (1, 1) .. (2, 1.3);
    \draw[shift={(0,2)}] (1, 0) .. controls (1.3, .9) .. (2, 1.3);
    \draw[shift={(0,2)}] (2, 1.3) .. controls (1.4, 1.8) .. (2, 2.6);
    \draw[shift={(0,2)}] (2, 2.6) .. controls (.8, 3.3) .. (.6, 4);
    \fill[pattern=north west lines, pattern color=green!70!black] (0,5.93) rectangle (.5,6.07);
    \draw[color=green!70!black] (0,5.93) rectangle (.5,6.07);
    \draw[<->] (0, 5.2) -- (.5, 5.2);
    \node[anchor=north] at (.25, 5.1) {\tiny $\tfrac{w(h)}{8}$};
    \draw[dotted, thick] (.5, 6.07) -- (.5, 5);
    \draw[shift={(0.2,6.2)}] (0, 0) rectangle (2, 2);
    \draw[shift={(0.2,6.2)},color=purple,thick] (0, -.8) rectangle (2, 2);
    \draw[shift={(0.2,6.2)}] (0, 2) node[anchor=north west,color=purple] {$B_{\text{top}}$};
    \draw[shift={(0.2,6.2)}] (0, 0) .. controls (1.5,.7) and (.7, 1.4) .. (2, 1.7);

    \draw[dotted, thick] (2.2, 6.2) -- (2.6, 6.2);
    \node[anchor=west] at (2.6, 6.2) {$S$};

    \draw[dotted, thick] (-.6, 6) -- (0, 6);
    \draw[dotted, thick] (-.6, 6.2) -- (0, 6.2);
    \draw[<->] (-.5, 6) -- (-.5, 6.2);
    \draw (-.6, 6.1) node[anchor=east] {$r = \frac{w(h)}{32}$};

    \draw[<->] (0, 1.8) -- (2, 1.8);
    \node[anchor=north] at (1, 1.8) {$w(h) + 4$};
    \draw[dotted, thick] (2, 1.6) -- (2, 2.2);

    \draw[<->] (.2, 8.5) -- (2.2, 8.5);
    \node[anchor=south] at (1.2, 8.5) {$w(h) - 4$};
    \node[anchor=south] at (.1, 9.2) {$\eta w(h)$};
    \draw[dotted, thick] (.2, 8.3) -- (.2, 9.2);
    \draw[dotted, thick] (0, 8.3) -- (0, 9.2);
    \draw[<->] (0, 9) -- (.2, 9);
    \draw[dotted, thick] (2.2, 8.3) -- (2.2, 8.7);

    \draw[<->] (0, .3) -- (2.2, .3);
    \node[anchor=north] at (1.1, .3) {$\tilde{w}$};
    \draw[dotted, thick] (0, .5) -- (0, .1);
    \draw[dotted, thick] (2.2, .5) -- (2.2, .1);

    \draw[<->] (4.2, .5) -- (4.2, 8.2);
    \node[anchor=west] at (4.2, 4.35) {$\tilde{h}$};
    \draw[dotted, thick] (2.2, .5) -- (4.4, .5);
    \draw[dotted, thick] (2.2, 8.2) -- (4.4, 8.2);

    \draw[<->] (3.2, 5.4) -- (3.2, 8.2);
    \node[anchor=west] at (3.2, 6.8) {$h + 4$};
    \draw[dotted, thick] (2.2, 5.4) -- (3.4, 5.4);
  \end{tikzpicture}
  \caption{The intersection of events in~\eqref{e:crazy_intersection_2}.}
  \label{fig:crazy_intersection_2}
\end{figure}

Consider now a configuration for which $\left\{S > -\infty\right\}$.
Informally speaking, we are going to position a box $B^S_{\text{bottom}}$ underneath the crossing of $B_{\text{top}}$.
This box will have same dimensions as the box appearing in \eqref{bigassumption} and it will be put within distance $r$ from the explored region.
More precisely, let
\begin{equation}
  \label{e:B_bottom}
  B^S_{\text{bottom}} = u + [0, w(h) + 4] \times [S - r - (h + 12), S - r],
\end{equation}
and refer again to Figure~\ref{fig:crazy_intersection} for an illustration of $B^S_{\text{bottom}}$ represented there in green.
Analogously we consider
\begin{equation}
  \label{e:H_bottom}
  H^S_{\text{bottom}} = \Big\{ B^S_{\text{bottom}} \text{ is crossed horizontally} \Big\}.
\end{equation}

We are now in position to describe the intersection of events that guarantee the horizontal crossing of $B$.
First let us give an intuitive description of them:
\begin{enumerate}[\quad a)]
\item We assume that $H_{\text{top}}$ occurred, or in other words, $S > -\infty$;
\item Having positioned $B^S_{\text{bottom}}$, we will find a point $u'$ on its left face, so that when the random walk starting from $u'$ reaches height $S - r$, it will have moved at least $w(h)/8$ to the right, see Figure~\ref{fig:crazy_intersection}.
\item Between heights $S - r$ and $S$, the walk will not have enough time (by its Lipschitz character) to move to the left of the horizontal crossing of $B_{\text{top}}$,
\end{enumerate}
therefore the walk starting at $u'$ will be forced by monotonicity to cross $B$ horizontally as claimed in \eqref{e:P_f}.

Having given the informal description of our construction, let us now define precisely the intersection of events we consider:
\begin{equation}
  \label{e:crazy_intersection_2}
  \mathcal{A} \;\; := \;\; H_{\text{top}} \; \cap \; H^S_{\text{bottom}} \; \cap \; \vdotdotprime{w(h)+4}{w(h)/8}{h+12}{B^S_{\text{bottom}}}.
\end{equation}
What is left for us to do is to prove that
\begin{align}
  \label{e:bound_crazy_intersection}
  & \P_{f^{B, r}} \big( \mathcal{A} \big) \geq \frac{\uc{c:ellipticity}^{24}}{128} \text{ and}\\
  \label{e:crazy_then_crosses}
  & \mathcal{A} \subseteq H^u(\tilde{w}, \tilde{h}),
\end{align}
which clearly imply \eqref{e:P_f}.

We start by proving \eqref{e:bound_crazy_intersection}.
Using the fact that the model is invariant under vertical translations, that $f^{B, r}$ is $r$-dependent in $B$ (see \eqref{e:finite_range}) and $\d(B_{\text{top}}, B^S_{\text{bottom}}) \geq r$, we obtain
\begin{equation*}
  \begin{split}
    \P_{f^{B, r}} (\mathcal{A})
    & = \P_{f^{B, r}} (H_{\text{top}}) \;\;
      \P_{f^{B, r}} \bigg( H^u \big( w(h) + 4, h + 12 \big) \; \mcap \; \vdotdotprime{w(h)+4}{w(h)/8}{h+12}{u} \bigg)\\
    & \geq \P_{f^{B, r}} (H_{\text{top}}) \;\;
     \bigg(  \P \bigg( H^u \big( w(h) + 4, h + 12 \big) \; \mcap \; \vdotdotprime{w(h)+4}{w(h)/8}{h+12}{u} \bigg) - \varepsilon(\tilde{w}, \tilde{h}, r) \bigg)\\
    & \overset{\eqref{e:fkg}}\geq \P_{f^{B, r}} (H_{\text{top}}) \;\;
      \bigg( \P \Big( H^u \big( w(h) + 4, h + 12 \big) \Big) \; \;
      \P \bigg( \vdotdotprime{w(h)+4}{w(h)/8}{h+12}{u} \bigg) - \varepsilon(\tilde{w}, \tilde{h}, r) \bigg)\\
    & \geq \Big( \frac{1}{2} - \varepsilon(\tilde{w}, \tilde{h}, r) \Big)
      \frac{\uc{c:ellipticity}^8}{2} \Big( \Big( \frac{\uc{c:ellipticity}^{16}}{8} - \gamma \Big) - \varepsilon(\tilde{w}, \tilde{h}, r) \Big)
      \geq \frac{\uc{c:ellipticity}^{24}}{128},
  \end{split}
\end{equation*}
by \eqref{e:cross_B_top}, \eqref{Hencore} and  \eqref{dansecondclaim}, proving \eqref{e:bound_crazy_intersection}.

All we are left to prove now is \eqref{e:crazy_then_crosses}.
For this, suppose that we are on the event $\mathcal{A}$ and let $u'$ be the starting point of the crossing in $H^S_{\text{bottom}}$.
It is clear that the horizontal crossing of $B^S_{\text{bottom}}$ from $x$ has to cross the bouncing path in the last event defining $\mathcal{A}$.
Therefore, when $X^{u'}$ reaches the height $S - r$, it will either have crossed $B$ horizontally (in which case we are done), or it will be to the right of $\pi_1(u) + w(h)/8$.
In this case, by the Lipschitz property of $X^{u'}$, when this wall reaches height $S$, it will be to the right of $\pi_1(u) + w(h)/8 - r$, which in turn is inside the box $B_{\text{top}} \cap \mathbb{R} \times [S, \infty)$.
This means that $X^{u'}$ will stay to the right of the horizontal crossing of $B_{\text{top}}$ and therefore it will also cross $B$, proving \eqref{e:crazy_then_crosses}, see Figure~\ref{fig:crazy_intersection}.

This concludes the proof of \eqref{e:P_f} and finishes the proof of the lemma.
\end{proof}

This concludes the proof of Proposition~\ref{p:h_rsw}.

\section{Bootstrapping fluctuations and proof of Theorem~\ref{t:main}}
\label{s:bootstrap}

The Russo-Seymour-Welsh estimates proved in the previous section allow us to obtain the following bootstrapping bound on the fluctuations.

\nc{c:bootstrap}
\nc{c:bootstrap0}
\nc{c:fluctuation}
\begin{lemma}
  \label{l:bootstrap}
  Assume that $\uc{c:ellipticity} \geq \frac{1}{4}$. For any fixed $\xi_{0} \in (0,1)$ there exist a decay rate $\uc{c:alpha}$, $\xi \in (0,\xi_0]$ and $\uc{c:bootstrap0} = \uc{c:bootstrap0}(\uc{c:bad_coupling}, \xi_{0})$ such that, if
  \begin{equation}
    \label{e:lower_bound_w_h0}
    w(h_{0}) \geq h_{0}^{\xi},
  \end{equation}
  for some $h_0 \ge \uc{c:bootstrap0}$, then
  \begin{equation}
    w(h) \geq \uc{c:fluctuation}h^{\xi},
  \end{equation}
  for all $h \geq h_{0}$.
\end{lemma}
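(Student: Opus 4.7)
The plan is a scale-doubling induction on $w(h)$: I will exhibit a fixed $\lambda > 1$ such that from any valid bound $w(h) \geq h^{\xi}$ at a large scale $h$ one gains the multiplicative factor $\tfrac{259}{256}$ when jumping to scale $\lambda h$, i.e.\ $w(\lambda h) \geq \tfrac{259}{256}\, w(h)$. Choosing $\xi$ small enough that $\lambda^{\xi} \leq \tfrac{259}{256}$ makes the induction self-sustaining, and the monotonicity of $h \mapsto w(h)$ from Remark~\ref{r:monotonicity_w} fills in intermediate scales $h \in [\lambda^n h_0,\, \lambda^{n+1} h_0]$, yielding the conclusion with $\uc{c:fluctuation} := \lambda^{-\xi}$.

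For the inductive step, assume $h \geq h_0$ and $w(h) \geq h^{\xi}$. Proposition~\ref{p:h_rsw} applied to $W := \tfrac{259}{256}\, w(h)$ yields
\begin{equation*}
    \inf_{u \in \mathbb{R}^2} \mathbb{P}\bigl(H^u(W,\, 3h)\bigr) \;\geq\; c_1 := \uc{c:ellipticity}^{54}/2^{10}.
\end{equation*}
To upgrade this to a probability $\geq 1/2$, I stack $k_0$ shifted copies vertically via the complementary form of Lemma~\ref{l:stretch}, namely \eqref{e:stretch_horizontal}, with $h_o$ proportional to $h$ (a bounded adjustment handles the parity constraint); this gives
\begin{equation*}
    \inf_{u \in \mathbb{R}^2} \mathbb{P}\bigl(H^u(W,\, k_0(3h + h_o))\bigr) \;\geq\; 1 - (1-c_1)^{k_0} - k_0\,\varepsilon(W, 3h, h_o).
\end{equation*}
Fix $k_0 = k_0(\uc{c:ellipticity})$ so that $(1-c_1)^{k_0} \leq 1/4$; since the walker is $1$-Lipschitz we have $W \leq h$, and the polynomial decoupling \eqref{e:polynomial} then gives $\varepsilon(W, 3h, h_o) = O(h^{2 - \uc{c:alpha}})$, which drops below $1/(4 k_0)$ once $h \geq \uc{c:bootstrap0}$ is large enough (provided $\uc{c:alpha} > 2$). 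The right-hand side above is therefore $\geq 1/2$ uniformly in $u$.

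Set $\lambda := k_0(3 + c)$, where $c$ is the constant in $h_o = c h$, so the uniform bound $\mathbb{P}(H^u(W,\, \lambda h)) \geq 1/2$ integrates to $d_1(W,\, \lambda h) \geq 1/2$, where $d_1(w, \lambda h) := (\Vol \mathcal{L}_1)^{-1}\int_{\mathcal{L}_1}\mathbb{P}(H^u(w, \lambda h))\,du$. Combined with the identity $d_1 + d_2 = 1$ noted in the paragraph preceding \eqref{e:cross_wh_is_half} and the monotonicity of $d_1$ in $w$ coming from \eqref{e:monotonicity_H}, Definition~\ref{d:w_h} forces $w(\lambda h) \geq W = \tfrac{259}{256}\, w(h)$. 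Now choose $\xi := \min\bigl(\xi_0,\, \log(259/256)/\log\lambda\bigr)$ and any $\uc{c:alpha} > 1 + \xi^{-1}$ (this in particular exceeds $2$); iterating gives $w(\lambda^n h_0) \geq (259/256)^n h_0^{\xi} \geq (\lambda^n h_0)^{\xi}$, and an intermediate scale $h \in [\lambda^n h_0,\, \lambda^{n+1} h_0]$ is handled by $w(h) \geq w(\lambda^n h_0) \geq \lambda^{-\xi} h^{\xi}$. The delicate point of the argument is keeping $\lambda$ a scale-independent constant: this demands $h_o = \Theta(h)$ while $k_0\, \varepsilon(W, 3h, h_o) \to 0$ as $h \to \infty$, which is exactly what the polynomial decoupling with $\uc{c:alpha} > 2$ provides, together with the $1$-Lipschitz bound $W \leq h$.
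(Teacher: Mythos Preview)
Your proof is correct and follows essentially the same approach as the paper: apply Proposition~\ref{p:h_rsw} to gain a fixed multiplicative factor in the width, stack vertically via Lemma~\ref{l:stretch} (equivalently \eqref{e:stretch_horizontal}) to push the crossing probability above $1/2$, deduce $w(\lambda h) \geq \lambda^\xi w(h)$ for a fixed $\lambda$, then iterate and interpolate using the monotonicity of $w(\cdot)$. The paper carries this out with the specific choices $\lambda = 4n$ and $h_o = h_0$, but the structure is identical. One minor imprecision: from the light cone you only get $w(h) \leq h$, hence $W = \tfrac{259}{256}\,w(h) \leq \tfrac{259}{256}\,h$, not $W \leq h$; this does not affect the argument since you only need $W \leq 3h$ for Lemma~\ref{l:stretch} and $W = O(h)$ for the $\varepsilon$-bound.
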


\begin{proof}
  To emphasize that there is no cyclic choice of constants inside the proof, we present their choice beforehand.
  Although it should be noted that the exact values of these constants is not important and other values would make the proof go through as well. Start by fixing $\uc{c:ellipticity} = \tfrac{1}{4}$ and set
  \begin{equation}\label{defn_xi}
    n := \left\lceil \frac{\ln(8)}{-\ln(1-2^{-118})}\right\rceil \quad \text{and} \quad \xi := \frac{\ln(129/128)}{\ln(4n)} \wedge \xi_{0}.
  \end{equation}
  Choose now $\alpha > 1+\xi^{-1} >2$. Finally, set
  \begin{equation}
    \label{defh0}
    \uc{c:bootstrap0} = \uc{c:bootstrap0} (\uc{c:bad_coupling}, \xi_{0}) = \left\lceil \left( 80 \uc{c:bad_coupling} n \right)^{1/(\alpha - 2)} \right\rceil \vee \big( \uc{c:h_large_rsw}(\tfrac{1}{4}, \uc{c:bad_coupling}, \alpha, \xi) + 64 \wedge \uc{c:large_h}(\tfrac{1}{4}) \big),
  \end{equation}
  where $\uc{c:large_h}$ is such that~\eqref{e:horizontal_lower} and~\eqref{e:vertical_lower} hold and $\uc{c:h_large_rsw}$ is given by Lemma~\ref{l:hard_horizontal_rsw}.
  By Proposition~\ref{p:h_rsw}, using that $\uc{c:alpha} > \xi^{-1}+1$ and $h_0 \ge \uc{c:bootstrap0}$, if $w(h_0) \ge h_0^\xi$ then
  \begin{equation}
    \inf_{u\in\mathbb{R}^2}\mathbb{P} \big( H^u \big(\tfrac{129}{128} w(h_0), 3h_0 \big)\big) \ge \frac{\uc{c:ellipticity}^{54}}{2^{10}} \geq 2^{-118}.
  \end{equation}
  Now we use Lemma~\ref{l:stretch} with $h=3h_{0}$ to obtain
  \begin{equation}
    \mathbb{P} \Big( H^u \big( \tfrac{129}{128} w(h_{0}), 4n h_{0})^{c} \Big) \leq \mathbb{P} \Big( H^u \big( \tfrac{129}{128} w(h_{0}), 3 h_{0})^{c} \Big)^{n} + n\varepsilon\big( \tfrac{129}{128}w(h_{0}), 3h_{0}, h_{0} \big).
  \end{equation}
  By Lemma \ref{l:error_decoupling}, we obtain, since $w(h_{0}) \leq h_{0}$,
  \begin{align}
    \label{jesaispas}
    \begin{split}
      \mathbb{P} \Big( H^u \big( \tfrac{129}{128} w(h_{0}), 4n h_{0} \big)^{c} \Big) & \leq
      \mathbb{P} \Big( H^u \big( \tfrac{129}{128} w(h_{0}), 3 h_{0})^{c} \Big)^{n}\\
      & \quad + \uc{c:bad_coupling} n \Big( \tfrac{129}{128} w(h_0) 3 h_0 + \big( \tfrac{129}{128} w(h_0) + 3 h_0 \big) h_0 + h_0^2 \Big) h_0^{-\uc{c:alpha}	}\\
      & \leq \mathbb{P} \Big( H^u \big( \tfrac{129}{128} w(h_{0}), 3 h_{0})^{c} \Big)^{n} + 10 \uc{c:bad_coupling} n h_0^{-\uc{c:alpha}+2}\\
      & \le \big(1-2^{-118} \big)^{n} + 10 \uc{c:bad_coupling} n h_0^{-\uc{c:alpha}+2}\\
      & \overset{\text{(see below)}}\le \frac18+\frac18<\frac12,
    \end{split}
  \end{align}
  where we used for the last line that $\alpha>2$ and that, by \eqref{defn_xi} and \eqref{defh0}, we have
  \begin{align}
    n \ge \frac{\ln(8)}{-\ln(1-2^{-118})}\text{ and } h_0 \ge \big ( 80 \uc{c:bad_coupling} n \big)^{1/(\alpha - 2)}.
  \end{align}
  This implies that
  \begin{align}
    \label{jesaispas2}
      w(4nh_0)
      \ge \frac{129}{128} w(h_0)\ge \frac{129}{128}h_0^\xi
      \ge \frac{129}{128(4n)^\xi} \left(4nh_0\right)^\xi \;\; \overset{\eqref{defn_xi}}{\geq} \;\; \left(4nh_0\right)^\xi.
  \end{align}
  Repeating inductively the steps from \eqref{jesaispas} to \eqref{jesaispas2}, we obtain
  \begin{equation}\label{eq:interpolation_0}
    w \big( (4n)^{k} h_{0} \big) > \big( (4n)^k h_{0} \big)^{\xi}, \quad \text{for all } k \in \N.
  \end{equation}
  We now claim that
  \begin{equation}
    \label{eq:interpolation}
    w(h) \geq \left(\frac{h}{4n}\right)^{\xi}, \quad \text{for all } h \geq h_{0},
  \end{equation}
  which concludes the proof with the choice $\uc{c:fluctuation} = (4n)^{-\xi}$.

  In order to verify~\eqref{eq:interpolation}, note that for all $h\ge h_0$, there exists a unique $k\in \mathbb{N}$ such that $h \in [(4n)^{k}h_{0}, (4n)^{k+1}h_{0})$ and notice that
  \begin{equation}
    \begin{split}
      \mathbb{P} \Big( H^u \big( \big( \tfrac{h}{4n} \big)^{\xi}, h \big)^{c} \Big)
      & \leq \mathbb{P} \Big( H^u \big( \big( \tfrac{h}{4n} \big)^{\xi}, (4n)^{k} h_{0} \big)^{c} \Big) \\
      & \leq \mathbb{P} \Big( H^u \big( \big( \tfrac{(4n)^{k+1}h_0}{4n} \big)^{\xi}, (4n)^{k} h_{0} \big)^{c} \Big) \\
      & \leq   \mathbb{P} \Big( H^u \big( (4n)^{k\xi}h_0^{\xi}, (4n)^{k} h_{0} \big)^{c} \Big).
    \end{split}
  \end{equation}
  This in particular implies
  \begin{equation*}
    \frac{1}{\Vol(\mathcal{L}_1)} \int_{u \in \mathcal{L}_1} \mathbb{P} \Big( H^u \big( \big( \tfrac{h}{4n} \big)^{\xi}, h\big)^{c} \Big) \d u \leq \frac{1}{\Vol(\mathcal{L}_1)} \int_{u \in \mathcal{L}_1} \mathbb{P} \Big( H^u \big( \big( (4n)^{k}h_0 \big)^{\xi}, (4n)^{k} h_{0} \big)^{c} \Big) \d u < \frac{1}{2},
  \end{equation*}
  by~\eqref{eq:interpolation_0}, which concludes the proof.
\end{proof}

We are finally in position to establish our main result.

\begin{proof}[Proof of Theorem~\ref{t:main}]
  Fix $\varepsilon \in \big(0, \tfrac{1}{2} \big)$ and choose $\xi_{0} = \tfrac{1}{2}-\varepsilon$. Let $\uc{c:alpha}>0$, $\xi \in (0, \xi_{0})$, and $\uc{c:bootstrap0}(\uc{c:bad_coupling}, \xi_0)$ be as in Lemma \ref{l:bootstrap}.
  Recall the transition probabilities \eqref{e:X_n}.
  To generate the steps of $(X_n)_{n \ge 0}$, we will consider a sequence $(U_n)_{n \ge 0}$ of i.i.d.~uniform random variables on $[0, 1]$ and declare that $X_{n + 1} - X_n = -1$ if and only if $U_n \le \frac{1}{2} - \delta \big( \textbf{1} \{ f{(X_n, n)}  \geq 0 \} - \textbf{1} \{ f{(X_n, n)} < 0 \} \big)$.
  In particular, note that on the event
  \begin{equation}
    E_{h,\delta}=\bigcap_{n=0}^{h-1}
    \big\{ U_n\in[0,\tfrac12-\delta]\cup(\tfrac12+\delta,1] \big\},
  \end{equation}
  we can couple $(X_n; 0\le n \le h)$ with a simple random walk $(Y_n; 0\le n \le h)$. Hence, for $h \ge 1$, and any $u \in \mathbb{R}^2$ we have
  \begin{equation}
    \begin{split}
      \mathbb{P} \big( V^u(h^{\xi_0},h) \big)
      & \overset{\text{(see below)}}{\leq} \mathbb{P}(E_{h, \delta}^c)
        + \mathbb{P} \Big( |Y_h| \le 2h^{\xi_0} \Big)\\
      & \leq \mathbb{P}(E_{h, \delta}^c)
        + \mathbb{P} \left( \left| \frac{Y_h}{h^{\frac12}} \right| \le 2h^{-\varepsilon} \right)
        \leq 2 h \delta + \mathbb{P} \left( \left| \frac{Y_h}{h^{\frac{1}{2}}} \right| \le 2h^{-\varepsilon}\right),
    \end{split}
  \end{equation}
  where in the first line above, we used the fact that, in order for the random walk to cross the box when reflected on its right face, it needs to be entirely contained in the box $u+[-h^{\xi_0}, 2h^{\xi_0}] \times [0,h]$. This follows from the reflection principle, using the random variables $1-U_{n}$ to determine the random walk $Y$ when it is to the right of the face $u+\{h^{\xi_0}\} \times [0,h]$.

  Using the Central Limit Theorem, we can choose $h_0\ge \uc{c:bootstrap0}$ large enough (depending on $\varepsilon$), then choose  $\delta > 0$ small enough (depending on $h_0$) such that the probability above is smaller than $1/2$ and thus
  \begin{equation}
    w(h_0)\ge h_0^{\xi_0} \geq h_{0}^{\xi}.
  \end{equation}
By Lemma~\ref{l:bootstrap},
  \begin{equation}
    \label{e:w_lower}
    w(h) \geq \uc{c:fluctuation}h^{\xi}, \quad \text{for all } h\geq h_0,
  \end{equation}
  This lower bound on $w(h)$ can be understood as a lower bound on the fluctuations of $X_n$.

  In order to get the exact statement \eqref{e:main} in Theorem~\ref{t:main}, we proceed with a geometric construction.
  By possibly increasing the value of $h_{0}$, we can assume that $w(h) \geq 512$, for all $h \geq h_{0}$. Fix $j = 3 \cdot 128$ and for $n \geq 3j h_{0}$ write $h = \frac{n}{3j}$ and let
  \begin{gather*}
     B_1 = [-w(h)-4, 0] \times [0, n], \quad B_2 = [-w(h)-4, 2w(h)+4] \times [0, n],  \\
     \text{and} \quad B_3 = [w(h), 2 w(h)+4] \times [0, n].
  \end{gather*}
  We can now use crossing events to estimate
  \begin{equation}
    \begin{split}
      \mathbb{P} \Big( X_n \geq \frac{\uc{c:fluctuation}}{(3j)^{\xi}}n^{\xi} \Big)
      & \overset{\eqref{e:w_lower}}\geq \mathbb{P} \big( X_n \geq w(h) \big)
      \geq \mathbb{P} \big( V(B_1) \cap H(B_2) \cap V(B_3) \big)\\
      & \overset{\eqref{e:fkg}}\geq \mathbb{P} \big( V(B_1) \big) \mathbb{P} \big( H(B_2) \big) \mathbb{P} \big( V(B_3) \big)\\
      & \overset{\text{Corol.~\ref{c:stretch_vertically}}}\geq \Big( \frac{\uc{c:vertical_rsw}\uc{c:ellipticity}^{8}}{2}\Big)^{2 \cdot 3j} \mathbb{P} \big( H(B_2) \big)
      \overset{\text{Corol.~\ref{c:stretch_horizontally}}}\geq \Big( \frac{\uc{c:vertical_rsw}\uc{c:ellipticity}^{8}}{2}\Big)^{2 \cdot 3j} \uc{c:c_rsw}^j,
    \end{split}
  \end{equation}
  finishing the proof of the theorem by possibly taking a smaller $\xi$.
\end{proof}

\begin{remark}
  \label{r:h_rsw_hyp}
  Let us now address the perturbative assumption $\delta < \uc{c:perturbative}$ in our main result.
  By looking at Lemma~\ref{l:bootstrap} it becomes clear that the reason why we need a small interaction between the random walk and the random environment is to obtain a lower bound on fluctuations, or more precisely, a lower bound on $w(h_0)$.

  This triggering assumption for the induction was necessary because in Lemma~\ref{l:hard_horizontal_rsw} we can only obtain a horizontal RSW if we have an a priori lower bound on $w(h)$ at the previous scale, see \eqref{e:bootstrap_hypothesis}.

  Digging a bit deeper into Lemma~\ref{l:hard_horizontal_rsw}, we see that \eqref{e:bootstrap_hypothesis} was used to obtain a decoupling in \eqref{e:epsilon_tilde_small}.
  Therefore, at the end of the day, the reason behind our perturbative assumption is to avoid having a very thin and tall box $B_{\text{top}}$.
  This would mean that the search for a horizontal crossing of $B_{\text{top}}$ could potentially generate a negative information in a large region, forcing us to position $B_{\text{bottom}}$ too far down, which would break our argument.

  Note that if a random environment satisfies Definition~\ref{def:decoupling} with $\varepsilon(w, h, r)$ that decays very fast with $h$ (say exponentially), then we do not expect to need a perturbative assumption on $\delta$.
  But it is an open question whether Proposition~\ref{p:h_rsw} holds without \eqref{e:bootstrap_hypothesis} in general.
\end{remark}

\section{Examples}
\label{s:examples}

\subsection{Gaussian fields}
\label{s:gauss_env}
~
\par Fix a function $q:\Z^{2} \to \R_{+}$ not identically null, and that is invariant under vertical reflexions of the plane.
Assume further that there exists $\beta > 2$ such that
\begin{equation}\label{eq:correlation_q}
q(x) \leq \uc{c:correlation_decay}|x|^{-\beta}, \text{ for all } x \in \Z^{2}\setminus{\{o\}},
\end{equation}
and that $q(o) \leq \uc{c:correlation_decay}$.
Consider now a family $\big(W_{x}\big)_{x \in \Z^{2}}$ of i.i.d.\ standard $\normal(0,1)$ random variables and define the random Gaussian field $\big( g_{x} \big)_{x \in \Z^{2}}$ via
\begin{equation}\label{eq:construction_gf}
g_{x} = \sum_{y}q(x-y)W_{y}.
\end{equation}

Given the Gaussian field $g$, consider now the environment given by
\begin{equation}
\label{eq:field}
f(x) = \sign(g_{x}) = \textbf{1}_{\{g_{x} > 0\}} - \textbf{1}_{\{g_{x}<0\}}.
\end{equation}
Observe that $f$ satisfies~\eqref{e:translation} and~\eqref{e:symmetry} by construction. It remains then to verify that it also satisfies~\eqref{e:fkg} and Definition~\ref{def:decoupling} for $\varepsilon$ as in~\eqref{e:polynomial}.

The~\eqref{e:fkg} inequality follows from the following lemma.
\begin{proposition}[FKG inequality for Gaussian fields~\cite{pitt1982}]
  \label{prop:FKG}
  Let $A$ and $B$ be two increasing events depending on finitely many coordinates of $g$. Then
  \begin{equation}\label{e:fkg_gf}
    \tag{FKG}
    \P \big( g \in A \cap B\big) \geq \P \big( g \in A \big) \P \big( g \in B \big).
  \end{equation}
\end{proposition}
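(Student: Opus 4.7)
The plan is to reduce the Gaussian FKG statement to the elementary Harris-FKG inequality for product measures, exploiting that the convolution kernel $q$ is nonnegative. This bypasses having to invoke Pitt's general Gaussian FKG theorem, which would otherwise be the standard route.

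First I would record the covariance structure. Since $q\ge 0$, the covariances
\begin{equation}
\Cov(g_x, g_{x'}) \;=\; \sum_{y\in\Z^2} q(x-y)\,q(x'-y)
\end{equation}
are nonnegative for every pair $x,x'\in\Z^2$, and the series converges because \eqref{eq:correlation_q} with $\beta > 2$ forces $q\in\ell^2(\Z^2)$. This is precisely the hypothesis of Pitt's theorem, but I would rather avoid citing it and give a self-contained argument.

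The key observation is that each $g_x = \sum_{y} q(x-y)\,W_y$ is a non-decreasing measurable function of the i.i.d.\ standard Gaussian family $(W_y)_{y\in\Z^2}$, simply because all of its coefficients $q(x-y)$ are nonnegative. Consequently, if $A$ is an increasing event depending on $g$ only through the coordinates $g_{x_1},\ldots,g_{x_n}$, then $\mathbf{1}_A$, being a non-decreasing function of $(g_{x_1},\ldots,g_{x_n})$, is itself a non-decreasing measurable function of $(W_y)_{y\in\Z^2}$. The same holds for $\mathbf{1}_B$.

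I would then conclude by applying the classical Harris-FKG inequality for product measures: any two non-decreasing measurable functions of a family of independent real-valued random variables are positively correlated. The only mild subtlety is that each $g_{x_i}$ genuinely depends on infinitely many $W_y$'s, so one cannot apply the finite-dimensional statement directly. I would handle this by truncating each $g_{x_i}$ to $g_{x_i}^{N} := \sum_{|y|\le N} q(x_i-y)\,W_y$, applying the finite-dimensional Harris-FKG inequality to the resulting non-decreasing functions of $(W_y)_{|y|\le N}$, and then letting $N\to\infty$, using $L^2$-convergence of the partial sums together with the fact that the distribution of $(g_{x_1},\ldots,g_{x_n})$ is absolutely continuous, so the boundary of any set on which $\mathbf{1}_A$ is discontinuous has probability zero (hence $\mathbf{1}_A(g^N)\to\mathbf{1}_A(g)$ in probability and bounded convergence delivers the limit of the expectations). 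The main (and frankly mild) obstacle is making this approximation step clean; the monotonicity core of the proof is entirely robust and relies only on $q\ge 0$.
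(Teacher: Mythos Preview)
Your approach is correct and more elementary than the paper's. The paper does not actually prove this proposition; it simply attributes it to Pitt~\cite{pitt1982}, whose theorem says that any Gaussian vector with nonnegative pairwise covariances satisfies FKG. You instead exploit the additional structure present here: because $q \ge 0$, each $g_x = \sum_y q(x-y) W_y$ is a coordinatewise non-decreasing function of the i.i.d.\ family $(W_y)_y$, so increasing events for $g$ lift to increasing events for the product measure, and the classical Harris--FKG inequality applies. This is self-contained and avoids Pitt's interpolation argument, at the cost of only covering Gaussian fields that arise as nonnegative linear images of white noise (which is exactly the setting of the paper).

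One technical remark on your limiting step: absolute continuity of the law of $(g_{x_1},\dots,g_{x_n})$ is not by itself enough to conclude that $\P(g\in\partial A)=0$; you also need that the topological boundary of an \emph{increasing} set in $\R^n$ has Lebesgue measure zero. This is true---slice $\R^n$ by lines in the direction $(1,\dots,1)$, observe that each slice meets $\partial A$ in at most one point, and apply Fubini---but it is worth stating. Alternatively you can sidestep the issue entirely by invoking Harris--FKG for countable products of independent reals (proved via conditional expectations and martingale convergence), which removes the need for any continuity of $\mathbf{1}_A$.
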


\nc{c:bad_coupling}

The verification of Definition~\ref{def:decoupling} with $\varepsilon$ as in~\eqref{e:polynomial} requires a bit more of work and we state it as a lemma.
\begin{lemma}
  \label{l:error_decoupling}
  The field $f$ satisfies the decoupling condition in Definition~\ref{def:decoupling}, with decay rate given by
  \begin{equation}
    \varepsilon(w, h, r) =  \uc{c:bad_coupling}\big(wh+ (w+h)r+r^{2} \big)r^{-\beta+\frac{3}{2}},
  \end{equation}
  where $\uc{c:bad_coupling}$ is a positive constant.
\end{lemma}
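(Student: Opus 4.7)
The plan is to construct $f^{C,r}$ from a local truncation of the Gaussian sum \eqref{eq:construction_gf} that defines $g$, and to control the error site by site with Sheppard's formula before union-bounding over an enlarged rectangle. Concretely, let $(\xi_x)_{x\in\Z^2}$ be an auxiliary family of i.i.d.\ $\normal(0,1)$ variables independent of $(W_y)$, and set
\begin{equation*}
  g^{C,r}_x := \sum_{|y-x|_\infty \leq r/2} q(x-y)\, W_y + \sigma_{r,x}\, \xi_x,
  \qquad \sigma_{r,x}^2 := \sum_{|y|_\infty > r/2} q(y)^2,
\end{equation*}
so that $g^{C,r}_x$ has the same marginal law $\normal(0,\sigma_g^2)$ as $g_x$, where $\sigma_g^2:=\sum_y q(y)^2$. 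Put $f^{C,r}_x := \sign(g^{C,r}_x)$ inside the enlarged rectangle $C^+ := C + [-r,r]^2$, and use the same truncation rule (with a fresh independent copy of the noise) on $\Z^2\setminus C^+$. This makes $f^{C,r}$ globally $r$-dependent while keeping $f^{C,r}\neq f$ possible only on $C^+$.

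The finite-range condition \eqref{e:finite_range} is then built in: $g^{C,r}_x$ depends on the noise only through the $W_y$ with $|y-x|_\infty \leq r/2$ together with the single variable $\xi_x$, so whenever $A\subset C$ and $B\subset\Z^2$ satisfy $\d(A,B)>r$, the $W$-neighborhoods are disjoint and the $\xi$-labels are distinct, whence $f^{C,r}|_A$ and $f^{C,r}|_B$ are independent.

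The core analytic input is the per-site sign flip probability. By construction $(g_x, g^{C,r}_x)$ is jointly centered Gaussian with common variance $\sigma_g^2$ and covariance $\sigma_g^2-\sigma_{r,x}^2$, hence correlation $\rho_x=1-\sigma_{r,x}^2/\sigma_g^2$. Sheppard's formula gives
\begin{equation*}
  \P\bigl(\sign(g_x) \neq \sign(g^{C,r}_x)\bigr) = \tfrac{1}{\pi}\arccos(\rho_x) \leq c\sqrt{1-\rho_x} \leq c\,\sigma_{r,x}/\sigma_g,
\end{equation*}
and the polynomial bound \eqref{eq:correlation_q} together with a routine two-dimensional integral comparison yields
$\sigma_{r,x}^2 \leq c\int_{r/2}^\infty \rho^{-2\beta+1}\,d\rho \leq c\,r^{-2\beta+2}$, i.e.\ $\sigma_{r,x}\leq c\,r^{-\beta+1}$. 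A union bound over the $|C^+\cap\Z^2|\leq c(w+r)(h+r) = c(wh + (w+h)r + r^2)$ sites where the coupled fields can disagree then gives
\begin{equation*}
  \P\bigl(f^{C,r}\neq f\bigr) \leq c(wh + (w+h)r + r^2)\,r^{-\beta+1} \leq \uc{c:bad_coupling}(wh+(w+h)r+r^2)\,r^{-\beta+3/2}
\end{equation*}
for all $r\geq 2$, matching the rate in the statement (and in fact with $1/2$ to spare in the exponent).

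The only part that is not a routine Gaussian computation is the construction ensuring the finite-range property for sets $B$ that intersect $\Z^2\setminus C^+$: the naive choice $f^{C,r}_x=f_x$ outside $C^+$ would still couple $f^{C,r}|_B$ to the $W$-noise near $A$ via the long tails of $q$. The remedy is the refinement indicated above, in which the noise used to define $f^{C,r}$ outside $C^+$ is taken from an independent copy $(\widetilde W_y,\widetilde\xi_x)$; the event $\{f^{C,r}\neq f\}$ then localizes to $C^+$, whose area is precisely the dimensional prefactor $(w+r)(h+r)$ appearing in the claimed bound. Beyond this bookkeeping, the remaining work is the Sheppard estimate, the elementary tail integral, and the union bound displayed above.
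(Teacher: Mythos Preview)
Your analytic core is correct and in fact sharper than the paper's: the Sheppard-formula bound $\P(\sign g_x\neq\sign g^{C,r}_x)\le c\,\sigma_{r,x}/\sigma_g\le c\,r^{-\beta+1}$ improves on the paper's per-site estimate $c\,r^{-\beta+3/2}$, which is obtained by splitting on $\{|g_0|\le t\}\cup\{|g_0-g^r_0|\ge t\}$ and optimizing $t$. So the ``$1/2$ to spare'' you note is real.

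However, your construction outside $C^+$ has a genuine gap. If on $\Z^2\setminus C^+$ you build $f^{C,r}$ from an \emph{independent} copy $(\widetilde W_y,\widetilde\xi_x)$ of the noise, then for every $x\notin C^+$ the pair $(f_x,f^{C,r}_x)$ consists of two independent $\pm1$ signs, so $\P(f_x\neq f^{C,r}_x)=1/2$ at each such site and hence $\P(f^{C,r}\neq f)=1$. The disagreement event does \emph{not} localize to $C^+$; you have traded a failure of \eqref{e:finite_range} for a failure of \eqref{e:likely_equal}. You correctly identified the obstruction (setting $f^{C,r}=f$ outside $C^+$ would re-couple $B$ to the noise near $A$ through the tails of $q$), but the proposed remedy does not work.

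The paper resolves this with a \emph{variable} truncation radius outside the $r$-neighborhood of $C$: for $x$ with $\d(x,C)>r$ one sets $f^{C,r}(x)=f^{\d(x,C)}(x)$, i.e.\ one truncates the convolution at radius $\d(x,C)/2$ using the \emph{original} noise $(W_y)$. This keeps the per-site disagreement probability bounded by $c\,\d(x,C)^{-\beta+3/2}$ (or $c\,\d(x,C)^{-\beta+1}$ with your Sheppard estimate), which is summable over $\{x:\d(x,C)>r\}$ and contributes another $c\big((w+h)r+r^2\big)r^{-\beta+3/2}$ to the bound. At the same time, for any $x\in B$ with $\d(x,C)>r$ and any $y$ in the $r/2$-neighborhood of $A\subset C$, one has $|x-y|\ge \d(x,C)-r/2>\d(x,C)/2$, so the $W$-supports remain disjoint and \eqref{e:finite_range} holds. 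Plugging this variable-radius construction into your Sheppard argument gives a complete proof (with the better exponent).
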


In order to prove the lemma, we start by bounding the covariance of the field $g$.

Observe that, for every $x \in \Z^{2}$, $g_{x}$ is a centered Gaussian random variable with variance
\begin{equation}
\label{eq:variance}
\sigma_{q}^2=\sum_{y \in \Z^{2}} q(y)^{2} < \infty,
\end{equation}
since we are assuming that $\beta >2$.
The covariance structure in this field is given by
\begin{equation}
\label{e:covariance_g}
\cov\big(g_{x},g_{y}\big) = \sum_{z \in \Z^{2}}q(x-z)q(y-z) = \sum_{z \in \Z^{2}}q(x-y-z)q(z) = q*q(x-y).
\end{equation}
Due to~\eqref{eq:correlation_q}, we have
\begin{equation}
\begin{split}
  q*q(x) & = \sum_{z: \, |z-x| \geq \frac{1}{2}|x|}q(x-z)q(z) + \sum_{z: \, |z-x| < \frac{1}{2}|x|}q(x-z)q(z) \\
  & \leq 2\sum_{y: \, |y| \geq \frac{1}{2}|x|}\uc{c:correlation_decay}^{2}|y|^{-\beta} \leq \uc{c:correlation_convolution}|x|^{-\beta+2},
\end{split}
\end{equation}
for some constant $\uc{c:correlation_convolution} > 0$.

We now construct finite-range approximations for $g$, by defining, for each $r>0$, the truncated filed $\big(g_{x}^{r}\big)_{x \in \Z^{2}}$ via
\begin{equation}\label{eq:finite_range_approximation}
g^{r}_{x} = \sum_{y: \, |x-y| \leq \frac{r}{2}}q(x-y)W_{y}.
\end{equation}
Notice that, if $|x-y|>r$, then $g_{x}^{r}$ and $g_{y}^{r}$ are independent.

\nc{c:approximation}
\begin{proposition}\label{prop:finite_range_approximation}
There exists a positive constant $\uc{c:approximation}>0$ such that, for every $r \geq 2$, $x \in \Z^{2}$, and $t \geq 0$,
\begin{equation}
\P \big( |g_{x}-g_{x}^{r}| \geq t \big) \leq 2e^{-\uc{c:approximation}t^{2}r^{2\beta-2}}.
\end{equation}
\end{proposition}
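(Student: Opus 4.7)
The random variable $g_x - g_x^r = \sum_{y : |x-y| > r/2} q(x-y) W_y$ is a centered Gaussian, so the plan is simply to compute (or rather, bound) its variance and then apply the standard Gaussian tail estimate
\begin{equation*}
\P(|Z| \geq t) \leq 2 e^{-t^2/(2\sigma^2)}
\end{equation*}
valid for any centered Gaussian $Z$ of variance $\sigma^2$.

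The variance in question equals $\sigma_{x,r}^2 := \sum_{y : |x-y| > r/2} q(x-y)^2$. By the translation invariance of this sum and the polynomial bound \eqref{eq:correlation_q} on $q$, we get
\begin{equation*}
\sigma_{x,r}^2 = \sum_{|z| > r/2} q(z)^2 \leq \uc{c:correlation_decay}^2 \sum_{|z| > r/2} |z|^{-2\beta}.
\end{equation*}
Since $2\beta > 4$, the latter sum over $\Z^2$ is comparable to its integral counterpart $\int_{r/2}^\infty u \cdot u^{-2\beta}\, du$, which is a finite constant times $r^{-2\beta+2}$. Hence there exists $C > 0$ (depending only on $\uc{c:correlation_decay}$ and $\beta$) such that $\sigma_{x,r}^2 \leq C r^{-2\beta+2}$ uniformly in $x \in \Z^2$ and $r \geq 2$.

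Applying the Gaussian tail estimate then yields
\begin{equation*}
\P(|g_x - g_x^r| \geq t) \leq 2 \exp\!\left(-\frac{t^2}{2 C r^{-2\beta+2}}\right) = 2 \exp\!\left(-\uc{c:approximation}\, t^2 r^{2\beta - 2}\right),
\end{equation*}
with $\uc{c:approximation} := 1/(2C)$, which is exactly the stated bound. There is no real obstacle here; the only minor care is in justifying the integral comparison for $\sum_{|z|>r/2}|z|^{-2\beta}$ on the lattice, which is routine since the summand is monotone in $|z|$ and $2\beta > 2$ ensures convergence with the stated rate $r^{-2\beta + 2}$.
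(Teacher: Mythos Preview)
Your proof is correct and follows essentially the same approach as the paper: identify $g_x - g_x^r$ as a centered Gaussian, bound its variance by $C r^{-2\beta+2}$ via the decay assumption \eqref{eq:correlation_q}, and apply the standard Gaussian tail bound. The only cosmetic difference is that the paper bounds $\sum_{|z|>r/2}|z|^{-2\beta}$ by summing over dyadic annuli rather than by integral comparison, but the two are equivalent here.
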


\begin{proof}
By translation invariance, it suffices to consider the case where $x=0$.
We now observe that the random variable
\begin{equation}
g_{0}-g_{0}^{r} = \sum_{y: \, |y| > \frac{r}{2}}q(-y)W_{y}
\end{equation}
has the normal distribution with mean $0$ and variance $\sum_{y: \, |y| > \frac{r}{2}}q(y)^{2}$.
Standard tail estimates for Gaussian variables now yield
\begin{equation}
\P \big( |g_{0}-g_{0}^{r}| \geq t \big) \leq 2e^{-\frac{t^{2}}{2 \sum_{y: \, |y| > \frac{r}{2}}q(y)^{2}}}.
\end{equation}
The proof is now complete by combining the estimate above with
\begin{equation}
\sum_{y: \, |y| > \frac{r}{2}}q(y)^{2} \leq \uc{c:correlation_decay}^{2}\sum_{y: \, |y| > \frac{r}{2}}|y|^{-2\beta} \leq 4\uc{c:correlation_decay}^{2}\sum_{n>\frac{r}{2}}n^{-2\beta+1} \leq \frac{2\uc{c:correlation_decay}^{2}}{\beta-1} \Big( \frac{r}{2}-1 \Big)^{-2\beta+2},
\end{equation}
finishing the proof of the proposition.
\end{proof}

Analogously to~\eqref{eq:finite_range_approximation}, we introduce the finite-range approximations of the environment by defining, for every $r \geq 1$,
\begin{equation}
\label{eq:field_approximation}
f^{r}(x) = \textbf{1}_{\{g_{x}^{r} \geq 0\}}-\textbf{1}_{\{g_{x}^{r}<0\}}.
\end{equation}

\nc{c:approximation_environment}
Finally, we provide a result for the environment $f$ which is analogous to Proposition~\ref{prop:finite_range_approximation}.
\begin{proposition}
\label{prop:finite_range_approximations_2}
There exists $\uc{c:approximation_environment}>0$ such that, for every $r \geq 2$ and $x \in \Z^{2}$,
\begin{equation}
\P \big( f(x) \neq f^{r}(x) \big) \leq \uc{c:approximation_environment}r^{-\beta+\frac{3}{2}}.
\end{equation}
\end{proposition}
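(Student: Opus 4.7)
The plan is to observe that the events $\{f(x)\neq f^{r}(x)\}$ and $\{\sign(g_{x})\neq \sign(g^{r}_{x})\}$ agree up to a negligible event (when $g_{x}=0$ or $g^{r}_{x}=0$, which have probability zero since both variables are nondegenerate Gaussians, at least assuming $q$ is not the zero function). The sign of $g_{x}$ and of $g^{r}_{x}$ can differ only when $|g_{x}|$ is smaller than the perturbation $|g_{x}-g^{r}_{x}|$, so for any threshold $t>0$ I would use the inclusion
\begin{equation}
  \{f(x)\neq f^{r}(x)\}\subseteq\{|g_{x}|\le t\}\cup\{|g_{x}-g^{r}_{x}|>t\},
\end{equation}
and then optimize $t$ in $r$.

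For the first term, $g_{x}$ is a centered Gaussian of variance $\sigma_{q}^{2}>0$ given by \eqref{eq:variance}, and bounding the Gaussian density by its maximum $1/(\sigma_{q}\sqrt{2\pi})$ gives
\begin{equation}
  \P(|g_{x}|\le t)\le\frac{2t}{\sigma_{q}\sqrt{2\pi}}.
\end{equation}
For the second term, Proposition~\ref{prop:finite_range_approximation} gives
\begin{equation}
  \P(|g_{x}-g^{r}_{x}|>t)\le 2e^{-\uc{c:approximation}t^{2}r^{2\beta-2}}.
\end{equation}

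The natural calibration is $t=r^{-\beta+3/2}$, so that $t^{2}r^{2\beta-2}=r$ and the exponential tail becomes $2e^{-\uc{c:approximation}r}$, while the small-ball term is of order $r^{-\beta+3/2}$. Since $e^{-\uc{c:approximation}r}$ decays much faster than any polynomial in $r$ for $r\ge 2$, we can bound $2e^{-\uc{c:approximation}r}\le C r^{-\beta+3/2}$ with a constant depending only on $\beta$ and $\uc{c:approximation}$, and collecting terms yields the claimed inequality with some $\uc{c:approximation_environment}>0$.

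There is no real obstacle here; the main subtlety is only to make sure that $\sigma_{q}>0$ (which holds because $q$ is not identically zero and $q(o)>0$ by the assumption in the statement of the environment construction) and to verify that $r^{-\beta+3/2}$ is indeed the right threshold, i.e.\ that balancing a polynomial small-ball estimate for $|g_{x}|$ against the Gaussian concentration for $g_{x}-g^{r}_{x}$ produces the exponent $-\beta+3/2$. Since the threshold $t$ enters linearly in the small-ball term and quadratically inside an exponential with prefactor $r^{2\beta-2}$, the choice $t=r^{-\beta+3/2}$ is the (essentially unique, up to logarithmic factors) rate where both terms match at polynomial order, which is exactly the order appearing in the statement.
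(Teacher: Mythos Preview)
Your proof is correct and follows essentially the same approach as the paper: the same inclusion $\{f(x)\neq f^{r}(x)\}\subseteq\{|g_{x}|\le t\}\cup\{|g_{x}-g^{r}_{x}|\ge t\}$, the same Gaussian small-ball bound and appeal to Proposition~\ref{prop:finite_range_approximation}, and the same choice $t=r^{-\beta+3/2}$.
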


\begin{proof}
Once again, it suffices to consider the case $x=0$ by translation invariance. If $f(0) \neq f^{r}(0)$, then for every fixed $t \geq 0$, either $|g_{0}-g_{0}^{r}| \geq t$ or $|g_{0}| \leq t$
This yields
\begin{equation}
\P \big( f(0) \neq f^{r}(0) \big) \leq \P \big( |g_{0}-g_{0}^{r}| \geq t \big) + \P\big( |g_{0}| \leq t \big).
\end{equation}
The first term in the right-hand side of the equation above is bounded in Proposition~\ref{prop:finite_range_approximation}. As for the second term, notice that $g_{0} \sim \normal(0, \sigma_{q})$, with $\sigma_{q}$ given by~\eqref{eq:variance}, and then bound the density of $g_{0}$ by $(2\pi\sigma_{q})^{-\frac{1}{2}}$ to obtain
\begin{equation}
\P\big( |g_{0}| \leq t \big) \leq t\sigma_{q}^{-\frac{1}{2}}.
\end{equation}
We now choose $t=r^{-\beta+\frac{3}{2}}$ to conclude the proof.
\end{proof}

\begin{proof}
The statement follows immediately by combining Proposition~\ref{prop:finite_range_approximations_2} and union bounds. Since $f^{r}$ is constructed with $g^{r}$ as in~\eqref{eq:field_approximation}, it is an $r$-dependent field.
\end{proof}

We are can now prove that the field $f$ satisfies the decoupling condition.

\begin{proof}[Proof of Lemma~\ref{l:error_decoupling}]
Recall the definition of $f^{r}$ in~\eqref{eq:field_approximation}. Fix a rectangle $C = [a, a+w] \times [b, b+h] \subset \Z^{2}$ and define the field
\begin{equation}
{f}^{C, r}(x) =
  \begin{cases}
    f^{r}(x), & \text{ if } \d(x,C) \leq r; \\
    f^{\d(x,C)}(x), & \text{ if } \d(x,C) > r.
  \end{cases}
\end{equation}

We first verify that the field defined above indeed satisfies the first statement of the lemma. Let $A \subset C$ and $B \subset \Z^{2}$ such that $\d (A,B) \geq r$. Notice that ${f}^{C,r}\big|_{A} = f^{r} \big|_{A}$ and thus this restriction is determined by the random variables $\big(W_{y}\big)_{\d(y,A) \leq \frac{r}{2}}$. Let us now study ${f}^{C,r} \big|_{B}$: if $x \in B$ is such that $\d(x,C) \leq r$, then ${f}^{C,r}(x)$ is determined by $\big(W_{y}\big)_{\d(y,x) \leq \frac{r}{2}}$. On the other hand, if $\d(x,C) >r$, then ${f}^{C,r}(x)$ is determined by $\big(W_{y}\big)_{\d(y,x) \leq \frac{\d(x,C)}{2}}$. Therefore, regardless of the choice of $B$, ${f}^{C,r} \big|_{A}$ and ${f}^{C,r} \big|_{B}$ are determined by disjoint collections of the family of Gaussian variables $\big( W_{x} \big)_{ x \in \Z^{2} }$ and thus are independent.

Finally, let us bound the probability that $f$ and ${f}^{C,r}$ are different. Fix $x \in \Z^{2}$ and notice that, if $\d(x,C) \leq r$, Proposition~\ref{prop:finite_range_approximations_2} yields
\begin{equation}
\P \big( f(x) \neq {f}^{C,r}(x) \big) \leq \uc{c:approximation_environment}r^{-\beta+\frac{3}{2}}.
\end{equation}
If, on the other hand, $\d(x,C) > r$, then
\begin{equation}
\P \big( f(x) \neq {f}^{C,r}(x) \big) = \P \big( f(x) \neq f^{\d(x,C)}(x) \big) \leq \uc{c:approximation_environment}\d(x,C)^{-\beta+\frac{3}{2}}.
\end{equation}
This yields
\begin{equation}
\begin{split}
\P \big( f \neq {f}^{C,r} \big) & \leq \sum_{x \in \Z^{2}} \P \big( f(x) \neq {f}^{C,r}(x) \big) \\
& \leq 4\uc{c:approximation_environment}\big( |C|+\Per(C)r+r^{2} \big)r^{-\beta+\frac{3}{2}} + \sum_{x: \d(x,C) > r} \P \big( f(x) \neq {f}^{C,r}(x) \big) \\
& \leq 4\uc{c:approximation_environment}\big( |C|+\Per(C)r+r^{2} \big)r^{-\beta+\frac{3}{2}} + \sum_{x: \d(x,C) > r} \uc{c:approximation_environment}\d(x,C)^{-\beta+\frac{3}{2}} \\
& \leq 4\uc{c:approximation_environment}\big( |C|+\Per(C)r+r^{2} \big)r^{-\beta+\frac{3}{2}} + \sum_{k > r} 8\uc{c:approximation_environment}\big( k+\Per(C)\big) k^{-\beta+\frac{3}{2}} \\
& \leq \uc{c:bad_coupling}\big(wh+ (w+h)r+r^{2} \big)r^{-\beta+\frac{3}{2}},
\end{split}
\end{equation}
concluding the proof of the lemma.
\end{proof}

\begin{remark}
  \label{r:gff}
  The Gaussian free field also satisfies~\eqref{e:translation} and~\eqref{e:symmetry} by construction.
  The fact that it satisfies~\eqref{e:fkg} also follows from~\cite{pitt1982}, since this field is positively correlated.
  An analogous to Proposition~\ref{prop:finite_range_approximation} above for the $d$-dimensional discrete Gaussian Free Field $\varphi$ can be found in~\cite[Lemma 3.2]{duminil2020equality}.
  More precisely, there exist constants $c,C>0$ and, for every $L>0$, $L$-dependent approximations $\varphi^{L}$ that satisfy, for all $t > 0$,
  \begin{equation}
    \P\big( |\varphi_{x}-\varphi^{L}_{x}| \geq t \big) \leq Ce^{-ct^{2}L^{\frac{d-2}{2}}}.
  \end{equation}
  If one defines $f$ as in~\eqref{eq:field_approximation} but for the discrete Gaussian Free Field $\varphi$ and its finite-range approximations $\varphi^{L}$ in place of $g^{r}$, the proof of Lemma~\ref{l:error_decoupling} above can be combined with the above estimate to obtain the bound
  \begin{equation}
    \P \big( f(x) \neq f^{L}(x) \big) \leq
    \uc{c:approximation_environment}L^{-\frac{d-3}{4}},
  \end{equation}
  for every $x \in \Z^{d}$ and every $L \geq 1$.
  This implies that the Gaussian Free Field in dimension $d$ satisfies the Definition~\ref{def:decoupling} with decay rate
  \begin{equation}
    \varepsilon(w, h, L) =  \uc{c:bad_coupling}\big(wh+ (w+h)L+L^{2} \big)L^{-\frac{d-3}{4}}.
  \end{equation}
\end{remark}

\subsection{Confetti}
\label{s:confetti}
~

\par Let us now turn our attention to the confetti random environment described informally in Section \ref{s:intro}.
We start by providing a rigorous definition for it.

Consider a Poisson point process on $\mathbb{R}^2 \times \mathbb{R}_+ \times \{-1, 1\} \times [0,1]$ with intensity measure $\mu = \lambda(\d u) \otimes \nu(\d r)  \otimes \tfrac{1}{2}(\delta_{-1} + \delta_1) \otimes \lambda (\d w)$, where $\lambda$ denotes the Lebesgue measure and $\nu$ is a fixed reference distribution.
Recall from~\eqref{e:nu_decay} we are assuming that $\nu$ has tails that decay polynomially for some exponent $\alpha >2$.
We denote $\P$ the probability law of the Poisson point process defined on an appropriate sample space $\Omega$ whose elements are point-measures
\begin{equation}
  \label{e:PPP}
  \omega = \sum_{i \geq 0} \delta_{(u_i, r_i, d_i, w_{i})}.
\end{equation}
This space is endowed with the $\sigma$-algebra $\mathcal{F}$ generated by the evaluation maps $g_A(\omega) = \omega(A)$, for every Borel measurable set $A \subseteq \mathbb{R}^2 \times \mathbb{R}_+ \times \{-1, 1\} \times [0,1]$.

Each point in the realization of that Poisson point process has the form $(u_i, r_i, d_i, w_{i}) \in \mathbb{R}^2 \times \mathbb{R}_+ \times \{-1, 1\} \times [0,1]$, and is associated with a colored ball in the plane, where $u_i$ and $r_i$ determine respectively its center and radius, while $d_i \in \{-1, 1\}$ determines the color assigned to it, where $1$ is interpreted as the color red and $-1$ as the color blue. The variable $w_{i}$ will be used as a tie breaking rule.
Observe that, conditioned on the collection of centers $(u_{i})_{i \geq 0}$, the random variables $(r_{i})_{i \geq 0}$, $(d_{i})_{i \geq 0}$, and $(w_{i})_{i \geq 0}$ are independent and act as decorations of the planar Poisson point process.
The variable $d_i$ assumes the values $1$ or $-1$ with equal probability.

Recalling the representation $\omega = \sum_{i \geq 0} \delta_{(u_i, r_i, d_i, w_{i})}$, we say that
\begin{display}
  \label{e:cover}
  \emph{the ball $i$ covers a point $z \in \mathbb{R}^2$} if $\d(z, u_i) \leq r_i$.
\end{display}
Informally, every point $z \in \R^{2}$ will be assigned the same color as a ball uniformly chosen among all balls that cover it.
In case a certain point in the plane is not covered by any ball, it will be assigned the color $0$ (which may me interpreted as the color gray).
In order to construct this coloring precisely, we define the index of the largest weight of a ball covering a point $z \in \R^{2}$ as
\begin{equation}
  \label{e:cover_index}
  I_z = \underset{i}{\argmax} \big\{ w_i ; \, z \text{ is covered by } i \big\},
\end{equation}
where we implicitly assume that $I_z = \infty$ if no ball covers the point $z$.

Given a realization $\omega$, we now define the environment $f^{\omega}:\R^{2} \to \{-1,0,1\}$ via
\begin{equation}
  \label{e:coloring}
  f^{\omega}(z) =
  \begin{cases}
    d_{I_z} \qquad & \text{if $I_z < \infty$ and}\\
    0, \qquad & \text{otherwise}.
  \end{cases}
\end{equation}
We reader may consult Figure~\ref{f:confetti} for an illustration of the random environment we just constructed.

\begin{remark}
  Note that the variables $\{I_z\}_{z \in \R^{2}}$ act solely in order to define the color of a point covered by many boxes balls.
  Any other unambiguous tie-breaking strategy would also suffice for our results to hold.
\end{remark}

Once again~\eqref{e:translation} and~\eqref{e:symmetry} follows directly from the construction of the model, since the Poisson point process considered satisfies these two properties.

\bigskip

Let us now verify that $f$ satisfies the~\eqref{e:fkg} inequality.
It will be convenient for us to decompose the Poisson point process $\omega$ in \eqref{e:PPP} into two parts $\omega = \omega_{+} + \omega_{-}$ where
\begin{equation}
  \label{e:PPP_pm}
  \omega_{\pm} = \sum_{i \geq 0, d_i = \pm} \delta_{(u_i, r_i, d_i, w_{i})}.
\end{equation}
Each of the $\omega_{\pm}$ is a Poisson point processes with intensity $ \tfrac{1}{2} \lambda (\text{d} u) \otimes \nu (\text{d} r) \otimes (\delta_{-1} + \delta_{1}) \otimes \lambda (\d w)$ (notice that the variable $d_{i}$ is fixed).

We say an event $A$ is increasing if, for any $(\omega_{+}, \omega_{-}), (\tilde{\omega}_{+}, \tilde{\omega}_{-}) \in \Omega$ such that $\omega_{+} \leq \tilde{\omega}_{+}$ and $\omega_{-} \geq \tilde{\omega}_{-}$,
\begin{equation}\label{eq:increasing_confetti}
(\omega_{+},\omega_{-}) \in A \text{ implies } (\tilde{\omega}_{+},\tilde{\omega}_{-}) \in A.
\end{equation}

\begin{remark}\label{re:monotonicity_coloring}
Notice that, for any coloring $f$, we can write $f=f(\omega_{+}, \omega_{-})$, where $f$ is an increasing function of $\omega_{+}$ and decreasing on the variable $\omega_{-}$. In particular, an event $A$ depending on the coloring $f$ is monotone increasing in the sense of~\eqref{eq:increasing} if $f \leq \tilde{f}$ and $f \in A$ implies $\tilde{f} \in A$.
\end{remark}

The next proposition extends the FKG inequality for our setting, implying that increasing events are positively correlated.
\begin{proposition}
For any two increasing events $A$ and $B$,
\begin{equation}
  \label{e:fkg_confetti}
  \P(A \cap B) \geq \P(A) \P(B).
\end{equation}
\end{proposition}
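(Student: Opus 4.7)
The plan is to reduce the claim to the standard Harris--FKG inequality for a single Poisson point process, applied separately to $\omega_{+}$ and $\omega_{-}$, and then combined by conditioning. The two key features we exploit are that $\omega_{+}$ and $\omega_{-}$ are independent (being the restrictions of $\omega$ to the disjoint fibers $\{d=+1\}$ and $\{d=-1\}$, which are thinnings of a Poisson process) and that an increasing event in the sense of \eqref{eq:increasing_confetti} is a coordinatewise increasing function of $\omega_{+}$ for every fixed $\omega_{-}$ and a coordinatewise decreasing function of $\omega_{-}$ for every fixed $\omega_{+}$.

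First, I would invoke the Harris--FKG inequality for a Poisson point process: for any two events $A', B'$ that are increasing functions of a Poisson point process $\eta$ on a Polish space, one has $\P(A' \cap B') \geq \P(A')\P(B')$. This is classical and can be obtained by discretizing the intensity measure on a sequence of refining partitions, applying the standard Harris--FKG inequality for products of Poisson measures on finitely many cells (each Poisson on $\mathbb{N}$ is totally ordered and hence trivially FKG, and products of FKG measures are FKG), and passing to the limit using that increasing events are stable under the natural approximation.

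Next, I would condition on $\omega_{-}$. For almost every realization of $\omega_{-}$, both $\mathbf{1}_A$ and $\mathbf{1}_B$ are increasing functions of $\omega_{+}$, so the Poisson FKG inequality applied to $\omega_{+}$ yields
\begin{equation}
\P(A \cap B \mid \omega_{-}) \geq \P(A \mid \omega_{-}) \, \P(B \mid \omega_{-}).
\end{equation}
Taking expectation in $\omega_{-}$ and setting $\phi(\omega_{-}) := \P(A \mid \omega_{-})$ and $\psi(\omega_{-}) := \P(B \mid \omega_{-})$, we obtain $\P(A \cap B) \geq \mathbb{E}[\phi(\omega_{-}) \psi(\omega_{-})]$.

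Finally, since $A$ and $B$ are decreasing in $\omega_{-}$, the maps $\phi$ and $\psi$ are themselves decreasing functions of $\omega_{-}$. The Poisson FKG inequality applied to the independent PPP $\omega_{-}$ gives the correlation inequality for two monotone functions of the \emph{same} monotonicity; for two decreasing functions, one obtains the same conclusion by the standard trick of writing $\phi = 1 - (1-\phi)$ and $\psi = 1-(1-\psi)$ with $1-\phi, 1-\psi$ increasing, so that $\mathbb{E}[\phi \psi] \geq \mathbb{E}[\phi]\mathbb{E}[\psi]$. This yields $\P(A \cap B) \geq \mathbb{E}[\phi]\mathbb{E}[\psi] = \P(A)\P(B)$, as desired. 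The only delicate point is the invocation of FKG for the Poisson point process on a continuous space (rather than on a finite set), but this is a standard fact that follows from a routine approximation and monotone class argument.
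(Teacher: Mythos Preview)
Your proof is correct and follows essentially the same two-step conditioning argument as the paper: apply Poisson FKG in one of the variables $\omega_{\pm}$ with the other fixed, then observe that the resulting conditional probabilities are monotone of the same type in the remaining variable and apply Poisson FKG once more. The only cosmetic difference is that the paper conditions first on $\omega_{+}$ (applying FKG in $\omega_{-}$ for two decreasing functions, then in $\omega_{+}$ for two increasing ones), whereas you condition first on $\omega_{-}$; the two orderings are interchangeable.
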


\begin{proof}
Fix two increasing events $A$ and $B$. We start by applying the FKG inequality for Poisson point processes (in the variable $\omega_{-}$) in the conditional setting. This gives us, for each configuration $\omega_{+}$ fixed,
\begin{equation}
\E \big( \textbf{1}_{A}(\omega_{+},\omega_{-}) \textbf{1}_{B}(\omega_{+},\omega_{-}) \big| \omega_{+} \big) \geq  \E \big( \textbf{1}_{A}(\omega_{+},\omega_{-}) \big| \omega_{+} \big) \E \big( \textbf{1}_{B}(\omega_{+},\omega_{-}) \big| \omega_{+} \big).
\end{equation}
We further notice that the conditional expectations $\omega_{+} \mapsto \E\big( \textbf{1}_{A} \big| \omega_{+} \big)$ and $\omega_{+} \mapsto \E\big(\textbf{1}_{B} \big| \omega_{+} \big)$ are both monotone increasing, since $A$ and $B$ are monotone in each of the variables separately. Another application of the the usual FKG inequality yields
\begin{equation}
\begin{split}
\P(A \cap B) & \geq \E\big( \P(A|\omega_{+}) \P(B|\omega_{+}) \big) \geq \E\big( \P(A|\omega_{+}) \big) \E \big( \P(B|\omega_{+}) \big) = \P(A) \P(B),
\end{split}
\end{equation}
concluding the proof.
\end{proof}

\bigskip

The next lemma proves that the confetti random environment satisfies a decoupling condition stated in Definition~\ref{def:decoupling} with $\varepsilon$ as in~\eqref{e:polynomial}.

\nc{c:boolean_decoupling}
\begin{lemma}
  \label{l:error_decoupling_confetti}
  The Confetti coloring $f$ satisfies the decoupling condition from Definition~\ref{def:decoupling} with decay rate given by
  \begin{equation}
    \varepsilon(w, h, r) =  \uc{c:boolean_decoupling} \big( wh + (w + h) r + r^2 \big) r^{-\alpha},
  \end{equation}
  where $\uc{c:boolean_decoupling} > 0$ depends only on $\uc{c:decay_nu}$.
\end{lemma}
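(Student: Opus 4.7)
My plan is to follow the strategy from the Gaussian case (Lemma~\ref{l:error_decoupling}) by constructing $f^{C,r}$ through a truncation of the radii in the Poisson point process $\omega$ from~\eqref{e:PPP}. For each $s \geq 2$, let $\omega^{(s)}$ denote the restriction of $\omega$ to atoms $(u_i, r_i, d_i, w_i)$ with $r_i \leq s/2$, and let $f^{(s)}$ be the Confetti coloring built from $\omega^{(s)}$ via~\eqref{e:coloring}. Since $f^{(s)}(x)$ depends only on atoms whose centers lie within distance $s/2$ of $x$, the field $f^{(s)}$ is globally $s$-dependent. In analogy with the Gaussian construction I would then set
\begin{equation*}
  f^{C,r}(x) := \begin{cases} f^{(r)}(x), & \d(x, C) \leq r,\\ f^{(\d(x,C))}(x), & \d(x, C) > r. \end{cases}
\end{equation*}

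The independence property~\eqref{e:finite_range} reduces to a short geometric check: for $y \in A \subset C$ and $x \in B$ with $\d(A,B) > r$, the influence region of $y$ (a disk of radius $r/2$) and of $x$ (a disk of radius $r/2$ if $\d(x,C) \leq r$, or $\d(x,C)/2$ otherwise) are disjoint, since in both cases $r/2 + \d(x,C)/2 \leq r/2 + \d(x,y)/2 < \d(x,y)$. Independence of the Poisson point process on disjoint regions then yields the claim.

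For the error bound~\eqref{e:likely_equal}, observe that $\{f^{(s)}(x) \neq f(x)\}$ forces the existence of an atom of $\omega$ with radius exceeding $s/2$ that covers $x$. To obtain the sharp $r^{-\alpha}$ exponent, the key is to bound the interior event $\{\exists\, x\in C_r : f^{(r)}(x) \neq f(x)\}$ (where $C_r$ denotes the $r$-neighborhood of $C$) directly through the expected number of atoms $(u,s,d,w)$ of $\omega$ with $s > r/2$ whose influence reaches $C_r$, i.e.\ $\d(u,C) \leq s + r \leq 3s$. Using~\eqref{e:nu_decay} and integration by parts, the moments $\int_{r/2}^{\infty} s^j\,\nu(ds)$ are $O(r^{j-\alpha})$ for $j = 0, 1, 2$; inserting these into the area estimate $|\{u : \d(u,C) \leq 3s\}| = O(wh + (w+h)s + s^2)$ yields a contribution of the desired form $C(wh + (w+h)r + r^2)r^{-\alpha}$. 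The tail contribution, from points $x$ with $\d(x, C) > r$, can be controlled exactly as in the proof of Lemma~\ref{l:error_decoupling}: the pointwise disagreement probability is $O(\d(x,C)^{-(\alpha-2)})$, and the resulting sum over concentric shells of perimeter $O(w+h+k)$ is bounded by the main term after an elementary exponent rearrangement.

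The main technical obstacle is precisely this exponent bookkeeping: without the atom-counting trick for the interior, summing the pointwise $O(r^{-(\alpha-2)})$ disagreement probability over the $\Theta(wh)$ sites of $C$ would cost an extra factor $r^2$ and ruin the target $r^{-\alpha}$ decay. The trick relies on the observation that a single large-radius atom of $\omega$ may be simultaneously responsible for disagreement between $f^{C,r}$ and $f$ at many sites, so the expected number of such ``bad'' atoms — rather than a sum of per-site probabilities — is the right quantity to control.
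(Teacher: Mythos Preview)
Your construction and the independence check are fine, and your atom-counting argument for the interior is exactly the right idea. The gap is in the exterior. You claim the per-site disagreement probability $O(\d(x,C)^{-(\alpha-2)})$ can be summed over shells ``exactly as in Lemma~\ref{l:error_decoupling}'' to match the target. But the Gaussian shell sum works only because the per-site exponent there \emph{equals} the target exponent $\beta-3/2$; here your per-site exponent is $\alpha-2$ while the target is $\alpha$. Concretely,
\[
\sum_{k>r}(w+h+k)\,k^{-(\alpha-2)} \;=\; O\!\big((w+h)\,r^{3-\alpha}+r^{4-\alpha}\big),
\]
which exceeds the desired $(w+h)\,r^{1-\alpha}+r^{2-\alpha}$ by a factor $r^{2}$. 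No ``elementary exponent rearrangement'' closes this; at best you prove the lemma with $\alpha$ replaced by $\alpha-2$.

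The repair is to apply your own atom-counting trick to the exterior as well (the bad atoms there are those with $s>r/2$ and $\d(u,C)\le 3s$, giving the correct $r^{-\alpha}$), rather than summing per-site bounds. But once you do that, the variable truncation level inherited from the Gaussian proof is superfluous: in the Confetti model the paper simply sets $\omega^{C,r}:=\mathbf{1}_{\mathcal{B}(C,r)^c}\cdot\omega$, removing every ball of radius $\ge r$ that touches $C$, and lets $f^{C,r}$ be the coloring of $\omega^{C,r}$. Then $\{f^{C,r}\neq f\}\subset\{\omega(\mathcal{B}(C,r))>0\}$, and a single application of Lemma~\ref{l:large_ball_intersection} gives \eqref{e:likely_equal} with the sharp $r^{-\alpha}$ directly. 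The Gaussian-style layering is needed there only because Gaussian fields lack a ball structure to truncate; importing it here creates the very bookkeeping problem you then have to work around.
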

Note the similarity between the above and Lemma~\ref{l:error_decoupling}.
Before proceeding to the proof of the above lemma, we state a preliminary result.
Fixed a rectangle $R = [a, b] \times [c, d]$ and $r > 0$, we consider the set
of triples $(u, s, d)$, corresponding to balls with radius at least $r$ that intersect the box $R$:
\begin{equation}
\label{eq:large_intersection_event}
\mathcal{B}(R, r) = \Big\{ (u,s,d, w) \in \R^{2} \times \R_{+} \times \{-1, 1\} \times [0,1]; s \geq r \text{ and } B(u, s) \cap R \neq \emptyset \Big\}.
\end{equation}
The next lemma bounds the probability that a point in the Poisson process falls into $\mathcal{B}(R,r)$.

\nc{c:large_balls}
\begin{lemma}
  \label{l:large_ball_intersection}
  There exists $\uc{c:large_balls} > 0$ such that, for any $R = [a, b] \times [c, d]$ and $r \geq 1$,
  \begin{equation}
    \P \Big( \omega \big( \mathcal{B}(R, r) \big) > 0 \Big) \leq \uc{c:large_balls} \big( \Vol(R) + r \Per(R) + r^2 \big) r^{-\alpha},
  \end{equation}
  where $\omega$ denotes the Poisson point process~\eqref{e:PPP}, $\Vol(R)$ and $\Per(R)$ denote respectively the volume and perimeter of $R$.
\end{lemma}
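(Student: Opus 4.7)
The plan is to bound the probability by the expected number of Poisson points in $\mathcal{B}(R,r)$, using Markov's inequality, and then compute this expectation explicitly from the intensity measure.

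First I would write out the expected number of points. Since the color $d \in \{-1,1\}$ and weight $w \in [0,1]$ components of the intensity measure together have total mass $1$, and using that $\omega(\mathcal{B}(R,r))$ is Poisson distributed, Campbell's formula gives
\begin{equation}
  \E \big[ \omega \big( \mathcal{B}(R,r) \big) \big] = \int_{r}^{\infty} \lambda \big( \{ u \in \R^{2} : B(u,s) \cap R \neq \emptyset \} \big) \, \nu(\d s).
\end{equation}

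Next I would identify the set of admissible centers. For a fixed radius $s$, the ball $B(u,s)$ intersects $R$ if and only if $u$ lies in the $s$-neighborhood of $R$. Since $R$ is a rectangle, the Steiner formula (or a direct computation) gives that the Lebesgue measure of this neighborhood equals
\begin{equation}
  \Vol(R) + s \, \Per(R) + \pi s^{2}.
\end{equation}
Substituting this bound yields
\begin{equation}
  \E \big[ \omega \big( \mathcal{B}(R,r) \big) \big] \leq \Vol(R) \int_{r}^{\infty} \nu(\d s) + \Per(R) \int_{r}^{\infty} s \, \nu(\d s) + \pi \int_{r}^{\infty} s^{2} \, \nu(\d s).
\end{equation}

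The main step is to estimate the three moment-type integrals above using only the tail bound \eqref{e:nu_decay}. The first is bounded by $\uc{c:decay_nu} r^{-\alpha}$ directly. For the others, integration by parts (or equivalently the layer-cake representation) gives, for $k \in \{1,2\}$ and $\alpha > k$,
\begin{equation}
  \int_{r}^{\infty} s^{k} \, \nu(\d s) = r^{k} \nu([r,\infty)) + k \int_{r}^{\infty} s^{k-1} \nu([s,\infty)) \, \d s \leq \uc{c:decay_nu} \Big( 1 + \frac{k}{\alpha - k} \Big) r^{k - \alpha},
\end{equation}
which uses $\alpha > 2$ exactly to make the $k=2$ case finite. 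Collecting the three bounds and applying Markov's inequality
\begin{equation}
  \P \big( \omega(\mathcal{B}(R,r)) > 0 \big) \leq \E \big[ \omega(\mathcal{B}(R,r)) \big]
\end{equation}
yields the claimed inequality with $\uc{c:large_balls}$ a constant depending only on $\uc{c:decay_nu}$ and $\alpha$.

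There is no real obstacle here: the statement is a first-moment bound, and the computation is essentially the Steiner formula combined with the polynomial tail assumption on $\nu$. The only point requiring mild care is verifying that the tail exponent $\alpha > 2$ controls the $s^{2}$ moment on $[r,\infty)$, which is precisely why this hypothesis appears in \eqref{e:nu_decay}.
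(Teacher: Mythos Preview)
Your proof is correct and follows essentially the same strategy as the paper: bound the probability by the expectation via Markov's inequality and then estimate $\mu(\mathcal{B}(R,r))$ using the tail bound on $\nu$ and the Steiner-type geometry of neighborhoods of $R$. The only cosmetic difference is the order of integration: you integrate over the radius $s$ first and invoke the Steiner formula for the area of the $s$-neighborhood, whereas the paper integrates over the center $u$ first and applies the tail bound $\nu([\max\{r,\d(u,R)\},\infty)) \leq \uc{c:decay_nu}\max\{r,\d(u,R)\}^{-\alpha}$ before splitting according to whether $\d(u,R) \leq r$ or not; both routes produce the same three terms.
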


\begin{proof}
Markov's inequality immediately implies
  \begin{equation}
    \P \Big( \omega \big( \mathcal{B}(R, r) \big) > 0 \Big) \leq \E \Big( \omega \big( \mathcal{B}(R, r) \big) \Big),
  \end{equation}
  so that it suffices to bound the above expectation.
  Now we have:
  \begin{equation}\label{eq:big_calculation_decouple}
    \begin{split}
      \E \Big( \omega( & \mathcal{B}(R, r)) \Big)
      = \int \nu \Big(  \big[ \max \big\{ r, \d( u, R ) \big\} , \infty \big) \Big) \d\lambda (u) \\
      & \overset{\eqref{e:nu_decay}}\leq \int \uc{c:decay_nu}  \max \big\{ r, \d( u, R ) \big\}^{-\alpha} \d\lambda (u)\\
      & = \int_{ \d( u, R ) \leq r} \uc{c:decay_nu} r^{-\alpha} \d\lambda (u) + \int_{\d(u, R) > r} \uc{c:decay_nu} \d(u, R)^{-\alpha} \d\lambda (u)\\
      & \leq \uc{c:decay_nu}\big( \Vol(R) + r\Per(R) +4r^{2} \big) r^{-\alpha} + \int_{r}^{\infty} \uc{c:decay_nu} \big( \Per(R) + 2 \pi s \big) s^{-\alpha} \d s \\
      & = \uc{c:decay_nu}\big( \Vol(R) + r \Per(R) +4r^{2}\big) r^{-\alpha} + \frac{\uc{c:decay_nu}}{\alpha - 1} \Per(R) r^{-\alpha + 1} + \frac{\uc{c:decay_nu}}{\alpha-2} \; 2 \pi \; r^{-\alpha + 2}\\
      & \leq \uc{c:large_balls}\big( \Vol(R) + r \Per(R) + r^2 \big) r^{-\alpha},
    \end{split}
  \end{equation}
  which concludes the proof.
\end{proof}

We are now ready to provide the proof of our decoupling inequality.
\begin{proof}[Proof of Lemma~\ref{l:error_decoupling_confetti}]
  Given a rectangle $D = [a, a + w] \times [b, b + h]$ and a radius $r > 0$, we define the Poisson Point process $\omega^{D, r}$ through
  \begin{equation}
    \label{e:omega_C_r}
    \omega^{D, r} := {\bf 1}_{\mathcal{B}(D, r)^c} \cdot \omega.
  \end{equation}
  This new process is essentially obtained by discarding every ball that touches $D$, but has radius larger or equal to $r$.
This allows us to introduce the desired field
  \begin{equation}
    \label{e:f_C_r}
    f^{D, r} := f^{\omega^{D,r}}
  \end{equation}
  and all we are left to do is to prove that the two conditions \eqref{e:likely_equal} and \eqref{e:finite_range} hold.

  The finite-range condition \eqref{e:finite_range} is a consequence  of the fact that all balls intersecting $D$ in $\omega^{D, r}$ have radius at most $r$.
  While the coupling condition \eqref{e:likely_equal} is implied by Lemma~\ref{l:large_ball_intersection} and the following calculation
  \begin{equation}
    \begin{split}
      \P \big( f^{C, r} \neq f \big) \quad
      & = \quad \P \Big( \omega \big( \mathcal{B}(D, r) > 0 \big) \Big)\\
      & \overset{\mathclap{\text{Lemma}~\ref{l:large_ball_intersection}}}\leq
        \quad \uc{c:large_balls} \big( \Vol(D) + \Per(D) r + r^2 \big) r^{-\alpha}
        \leq \varepsilon (w, h, r),
    \end{split}
  \end{equation}
  recalling that $h, w \geq 1$ and properly choosing $\uc{c:boolean_decoupling}$.
\end{proof}

\bibliographystyle{plain}
\bibliography{all}
\end{document}